\DeclareFontFamily{OT1}{pzc}{}
\DeclareFontShape{OT1}{pzc}{m}{it}{<-> s * [1.10] pzcmi7t}{}
\DeclareMathAlphabet{\mathpzc}{OT1}{pzc}{m}{it}
\newtheorem{theorem}{Theorem}[section]
\newtheorem{lemma}[theorem]{Lemma}
\newtheorem{corollary}[theorem]{Corollary}
\newtheorem{proposition}[theorem]{Proposition}
\newtheorem{remark}[theorem]{Remark}
\newtheorem{definition}[theorem]{Definition}
\newenvironment{proof}{\begin{trivlist}
    \item[\hskip\labelsep{\bf Proof.}]}{$\hfill\Box$\end{trivlist}}
\newenvironment{proofof}[1]{\begin{trivlist}
    \item[\hskip\labelsep{\bf Proof of {#1}.}]}{$\hfill\Box$\end{trivlist}}
{\theoremstyle{plain} \theorembodyfont{\rmfamily}
}
\numberwithin{equation}{section}
\numberwithin{figure}{section}
\newcommand{\bsv}{{\boldsymbol{v}}}
\newcommand{\bsnu}{{\boldsymbol{\nu}}}
\newcommand{\bsvarrho}{{\boldsymbol{\varrho}}}
\newcommand{\bbR}{\mathbb{R}}
\newcommand{\R}{\mathbb{R}}
\newcommand{\N}{\mathbb{N}}
\newcommand{\bbP}{\mathbb{P}}
\newcommand{\calF}{\mathcal{F}}
\newcommand{\calH}{\mathcal{H}}
\newcommand{\calI}{\mathcal{I}}
\newcommand{\calNN}{\mathcal{NN}}
\newcommand{\calT}{\mathcal{T}}
\newcommand{\calE}{\mathcal{E}}
\newcommand{\calV}{\mathcal{V}}
\newcommand{\mask}[1]{{}}
\newcommand{\norm}[2][]{\| #2 \|_{#1}}
\newcommand{\snorm}[2][]{| #2 |_{#1}}
\newcommand{\setc}[2]{\left\{#1\, :\,#2\right\}}
\newcommand{\set}[2]{\{#1\,:\,#2\}}
\newcommand{\simp}[2]{\conv (\{#1, \dots, #2\})}
\newcommand{\domain}{\Omega}
\newcommand{\Sop}[1]{\operatorname{S}^{\rm 1}_{#1}}
\newcommand{\RTp}[1]{\operatorname{RT}_{#1}}
\newcommand{\Szp}[1]{\operatorname{S}^{\rm 0}_{#1}}
\newcommand{\So}{\Sop{1}}
\newcommand{\RT}{\RTp{0}}
\newcommand{\Sz}{\Szp{0}}
\newcommand{\Ne}{\operatorname{N}_{0}}
\newcommand{\CR}{\operatorname{CR}_{0}}
\DeclareMathOperator{\divv}{div}
\DeclareMathOperator{\gradd}{grad}
\DeclareMathOperator{\curl}{curl}
\newcommand{\Ned}{N\'ed\'elec }
\newcommand{\be}{\begin{equation}}
\newcommand{\ee}{\end{equation}}
\newcommand{\bea}{\begin{eqnarray}}
\newcommand{\eea}{\end{eqnarray}}
\newcommand{\beas}{\begin{eqnarray*}}
\newcommand{\eeas}{\end{eqnarray*}}
\DeclareMathOperator*{\argmin}{argmin}
\DeclareMathOperator*{\Span}{span}
\newcommand{\relu}{{\rho}}
\newcommand{\heavi}{{\sigma}}
\newcommand{\heavii}{{\tilde\sigma}}
\newcommand{\realiz}[1]{{\rm R}(#1)} 
\newcommand{\realizc}[1]{{\rm R}\left(#1\right)} 
\newcommand{\depth}{L}
\newcommand{\size}{M}
\newcommand{\Parallel}[1]{{\rm P}(#1)} 
\newcommand{\Parallelc}[1]{{\rm P}\left(#1\right)} 
\newcommand{\sconc}{\odot}
\newcommand{\sizefirst}{\size_{\operatorname{in}}}
\newcommand{\sizelast}{\size_{\operatorname{out}}}
\newcommand{\inradius}{r}
\DeclareMathOperator{\conv}{conv}
\DeclareMathOperator{\interior}{int}
\newcommand{\Id}{{\rm Id}}
\begin{document}

\bibliographystyle{abbrv}
\title{De Rham compatible Deep Neural Network FEM} \author[$\dagger$]{
  \mbox{ } Marcello Longo} \author[$\dagger$]{Joost A. A. Opschoor}
\author[$\ddagger$]{Nico Disch} \author[$\dagger$]{\mbox{ } \newline
  Christoph Schwab} \author[$\ddagger$]{Jakob Zech}

\affil[$\dagger$]{\footnotesize Seminar for Applied Mathematics, ETH
  Z\"{u}rich, R\"{a}mistrasse 101, CH--8092 Z\"urich, Switzerland.
  \newline \texttt{marcello.longo@sam.math.ethz.ch,\;
    joost.opschoor@sam.math.ethz.ch,\;
    christoph.schwab@sam.math.ethz.ch}}
\affil[$\ddagger$]{\footnotesize IWR, Universit\"at Heidelberg, Im
  Neuenheimer Feld 205, 69120 Heidelberg, Germany.  \newline
  \texttt{n.disch@stud.uni-heidelberg.de,\;
    jakob.zech@uni-heidelberg.de}}
\maketitle
\date{}
\begin{abstract}
  On general regular simplicial partitions $\calT$ of bounded polytopal
  domains $\domain \subset \bbR^d$, $d\in\{2,3\}$,
  we construct \emph{exact neural network (NN) emulations} of all
  lowest order finite element spaces in the discrete de Rham complex.
  These include the spaces of piecewise constant functions, continuous
  piecewise linear (CPwL) functions, the classical ``Raviart-Thomas
  element'', and the ``N\'{e}d\'{e}lec edge element''. For all but the
  CPwL case, our network architectures employ both ReLU (rectified
  linear unit) and BiSU (binary step unit) activations to capture
  discontinuities.  In the important case of CPwL functions, 
  we prove that it suffices to work with pure ReLU nets. 
  Our construction and DNN architecture
  generalizes previous results in that no geometric restrictions 
  on the regular simplicial partitions $\calT$ of $\domain$ are required 
  for DNN emulation. 
  In addition,
  for CPwL functions our DNN construction is valid in any dimension $d\ge 2$.
  Our ``FE-Nets'' are required in the variationally correct,
  structure-preserving approximation of boundary value problems of
  electromagnetism in nonconvex polyhedra $\domain \subset \R^3$.
  They are thus an essential ingredient in the application of e.g.,
  the methodology of ``physics-informed NNs'' or ``deep Ritz methods''
  to electromagnetic field simulation via deep learning techniques.
  We indicate
  generalizations of our constructions to higher-order compatible
  spaces and other, non-compatible classes of discretizations, in
  particular the ``Crouzeix-Raviart'' elements and Hybridized, Higher
  Order (HHO) methods.
\end{abstract}

\noindent
{\bf Key words:}
De Rham Complex, Finite Elements, Lavrentiev Gap, Neural Networks, PINNs

\noindent
{\bf Subject Classification:}
41A05, 68Q32, 26B40, 65N30

\tableofcontents

\section{Introduction}
\label{sec:Intro}

Recent years have seen the emergence of Deep Neural Network (DNN)
based methods for the numerical approximation of solutions to partial
differential equations (PDEs for short). In one class of proposed
methods, DNNs serve as \emph{approximation architectures} in a
suitable, weak form of the PDE of interest. In \cite{EYuDeepRitz},
for elliptic, self-adjoint PDEs the variational principle associated
to the PDE is computationally minimized over suitable DNNs, so that
the energy functional of the physical system of interest gives rise to
a consistent loss function for the training of the DNN. Numerical
solutions obtained from training the approximating DNN in this way
correspond to approximate variational solutions of the PDE under
consideration.

The recently promoted ``physics-informed NNs'' (PiNNs),
e.g.\ \cite{RPK_2019,Yang_2021} and references there, insert DNN
approximations with suitably smooth activations (e.g.\ softmax or
tanh) as approximation architecture into the strong form of the
governing PDE.  Approximate solutions are obtained by numerical
minimization of loss functions obtained by discretely enforcing
smallness of the residual at collocation points in the spatio-temporal
domain.  While empirically successful in a large number of test cases,
also DNN based approximations are subject to the fundamental paradigm
that ``stability and consistency implies convergence''.  A key factor
of recent successful DNN deployment in numerical PDE solution is their
excellent approximation properties, in particular on high-dimensional
state- and parameter-spaces, e.g.\ \cite{PV2018,MR3564936,SZ19_2592}
and the references there.  High smoothness of DNNs with smooth
activations may, however, preclude convergence of so-called ``deep
Ritz'' approaches where loss functions in DNN training are derived
from energies in variational principles \cite{EYuDeepRitz}, even for
linear, deterministic and well posed PDEs.

To leverage the methodology of PiNNs and e.g., the variational Ritz
method for computational electromagnetics, computational
magneto-hydrodynamics etc., \emph{structure-preserving DNNs} must be
adopted.  We provide here, therefore, 
\emph{de Rham complex compatible DNN emulations} of the standard, lowest
order finite element spaces on regular, simplicial triangulations of 
polytopal domains $\domain \subset \bbR^d$, $d=2,3$.
These spaces satisfy exact (de Rham) sequence
properties, and also spawn discrete boundary complexes on
$\partial\domain$ that satisfy exact sequence properties for the
surface divergence and curl operators $\divv_{{\partial \Omega}}$ and
$\curl_{{\partial \Omega}}$.  These in turn enable ``neural boundary
elements'' for computational electromagnetism, recently proposed in
\cite{AHS22_989}.
\subsection{Previous work}
\label{sec:PreWrk}
The connection between DNNs with Rectified Linear Unit 
(ReLU for short) activation and continuous, piecewise linear (CPwL)
spline approximation spaces has been known for some time:
\emph{nodal discretizations} based on CPwL finite element methods (FEM) 
can be emulated by ReLU NNs (e.g.\ as introduced in \cite{ABMM2016} and \cite{HLXZ2020}).

When CPwL finite elements are applied to, for example, weak
formulations of the time-harmonic Maxwell equations, 
they are known to converge to
the correct solution, generally, only for convex 
polygons or polyhedra: 
if $\domain$ has re-entrant corners or edges, then with\footnote{
Definitions of the (standard) spaces 
$H^1(\domain)$, $H^0(\divv,\domain)$ and $H^0(\curl,\domain)$ 
are recalled in Section \ref{sec:derham}.}
\begin{equation*}
  X_N(\domain) := H^0(\divv,\domain) \cap H^0(\curl,\domain)\cap\{ u \colon u \times n = 0  \text{ on }\ \partial\domain\},
\end{equation*}
where $ n $ is a unit normal vector to the boundary
$ \partial \domain $ of $ \domain $,
the vector fields $[H^1(\domain)]^3 \cap X_N(\domain)$ are closed in
$X_N(\domain)$ \emph{without being dense}, see, e.g.,
\cite{CostabelCoerCMaxw,CoDaNicMaxw}.  For such nonconvex polyhedra,
the weak solution to the time-harmonic Maxwell's equations 
is generally not contained in $[H^1(\domain)]^3$.

Since any discrete conforming space based on a standard nodal finite
element method is contained in $[H^1(\domain)]^3$, nodal FEM in this
situation converges to a wrong solution (in $[H^1(\domain)]^3$) as the
meshwidth tends to zero (respectively as the width of the
corresponding NN tends to infinity) \cite{CoDaMaxEVP}.  Similar issues
will arise for PiNN numerical approximations of low-regularity
solutions for $H^0(\curl,\domain)$-based PDEs such as the
time-harmonic Maxwell equations.  They will persist also for DNN
surrogates with more regular activation functions such as $ReLU^k$
for $k\in\N$ and sigmoidal or softmax activations.  
On bounded
sets, such NNs realize Lipschitz continuous functions, which are in
$[H^1(\domain)]^3$ and therefore may converge to an incorrect solution.

A second broad class of variational models, where continuous nodal FEM
  may
  cause problems, are ``deep Ritz'' type approaches such as in \cite{EYuDeepRitz}, which attempt to minimize energy functionals.
  For certain nonlinear problems the so-called ``Lavrentiev gap''
  incurred by CPwL approximation architectures is known to be a
  fundamental obstruction to obtain convergent families of discrete
  minimizers, see e.g., \cite{MR969900,MR718413,MR1836612}. 
Again, relaxing
continuity below $H^1$-conformity is known to remedy this issue; see,
e.g.\ \cite{balci2021crouzeixraviart} and the discussion and
references there. 
Accordingly, in Section~\ref{sec:CRElement} of the present paper we present CR-Net,
a DNN emulation of the Crouzeix-Raviart element with BiSU and ReLU activations, 
on general regular, simplicial partitions 
of polytopal domains $\domain \subset \bbR^d$, $d\geq 2$,
which, when used in a deep Ritz method style
approach for variational problems, affords convergent sequences of DNN
approximations of minimizers.  CR-Net will also afford advantages in
variational image segmentation (e.g.\ \cite{ChambPock} and the
references there).

Structure preservation in scientific machine learning
is also the topic of \cite{THH2022}.
For machine learning models on graphs,
a data driven exterior calculus is introduced
which strongly enforces physical laws,
e.g. those in the de Rham complex,
while allowing for additional information
to be learned from data.

\subsection{Contributions}
\label{sec:Contr}
The purpose of the present paper is the design of DNNs which emulate
\emph{exactly}, on arbitrary regular, simplicial partitions
$\calT$ of polytopal domains $\domain\subset \R^d$, $d=2,3$, 
the FE spaces $\So(\mathcal{T},\domain)$ 
(continuous, piecewise linear functions), 
$\Ne(\mathcal{T},\domain)$ (the \Ned element),
  $\RT(\mathcal{T},\domain)$ (the Raviart-Thomas element) and
  $ \Sz(\mathcal{T},\domain)$ (the piecewise constant functions).  
  The precise definitions of these spaces will be given in Section
  \ref{sec:fem}.

We provide constructions of DNNs based on a combination of ReLU
\eqref{eq:reludef} 
and BiSU (Binary Step Unit)
\eqref{eq:heavidef} 
activations, which emulate these classical, lowest-order FE spaces in
the de Rham complex on a regular, simplicial partition $\calT$ of
$ \domain $.
We underline that our construction of NNs which emulate, in
particular, the classical ``Courant Finite Elements''
$\So(\mathcal{T},\domain)$, as well as $\Sz(\mathcal{T},\domain)$ and
$\RT(\mathcal{T},\domain)$, applies to polytopal domains $\domain$ of
any dimension $d\geq 2$.  For the practically relevant space
$\So(\mathcal{T},\domain)$, the so-called ``continuous, piecewise linear (CPwL) functions'',
we provide DNN constructions based on ReLU activation only, 
which work in arbitrary, finite dimension $d\geq 2$
(the univariate case $d=1$ being trivial).

Our constructions accommodate general, regular simplicial partitions
$\calT$ of $\domain$.  In particular, apart from regularity of the
simplicial partition $\calT$ of the polytopal domain $\domain$, no further
constraints of geometric nature are imposed on $\calT$, in arbitrary
dimension $d\geq 2$.  Our results on ReLU NN emulation of CPwL
functions in Section~\ref{sec:cpwlrelu} therefore unify and
quantitatively improve earlier ones such as, e.g., 
\cite[Section 3]{HLXZ2020}, 
which covered only CPwL FE spaces on particular triangulations of $\domain$.  
Our main results, Propositions
\ref{prop:basisnet} and \ref{prop:CPLbasisnet} and
Theorem~\ref{thm:derhamapx} in Section~\ref{sec:approximation},
provide mathematically exact DNN realizations of the lowest order FE
spaces in the exact sequence \eqref{discderham} on general regular,
simplicial partitions of the contractible, polytopal domain $\domain$.
In our main results using ReLU and BiSU activations, the network size
scales linearly with the cardinality $ \snorm \calT $ of $\calT$.  
For the ReLU NN emulation of CPwL functions, the network size is in
general of the order $\snorm\calT \log(\snorm\calT)$, which
can be improved to order $|\calT|$ for shape regular meshes.

\subsection{Layout}
\label{sec:outline}
The structure of this paper is as follows. In Section
  \ref{sec:fem} we introduce the de Rham complex. 
In
Section~\ref{sec:nndef}, we recapitulate notation and basic
definitions for the NNs which we consider.  
We also review 
a basic NN calculus that shall be used subsequently in order to derive
several properties of the proposed NN architectures.

Sections~\ref{sec:nnfem} and \ref{sec:cpwlrelu} contain the core
material of the paper: in Section \ref{sec:nnfem}, using ReLU and BiSU
activations, we provide explicit emulations for bases of all the FE
spaces considered in this paper, without geometric conditions on the
regular triangulations $\calT$ of $\domain$. In Section
\ref{sec:cpwlrelu} we show that for the special case of the emulation
of CPwL functions,
the same can be achieved employing solely ReLU activations.  In both
sections, we show that the network size depends only moderately on the
space dimension $d$ and the shape regularity of the partition.

In Section~\ref{sec:approximation} we combine NN emulations of basis
functions from Sections \ref{sec:nnfem} and \ref{sec:cpwlrelu} to
provide emulations of the FE spaces and discuss the implications of
our results for function approximation by NNs in the respective
Sobolev spaces. Section~\ref{sec:traces} provides a construction of NN
emulations for compatible spaces on the boundary
$\Gamma=\partial\domain$ of the polytopal domains. These spaces are
required in the deep neural network approximation of boundary integral
equations in electromagnetics, among others, as discussed in
\cite{BHvS03_22,BDKSVW2020} and the references there.  Finally, in
Section \ref{sec:ExtConcl} we present conclusions and explain how our
analysis may be extended to higher order polynomial spaces and to
certain Finite Element families which are non-compatible with
\eqref{derham}.

\subsection{Notation and Finite Element spaces}
\label{sec:fem}
We recall definitions of the de
Rham complex and corresponding lowest order FE spaces.
\subsubsection{Meshes}
\label{sec:meshes}
The term ``mesh'' shall denote certain simplicial partitions of
polyhedral domains $ \domain \subseteq \R^d $ for some
$ d \in \N = \{1,2,\ldots\} $.  To specify these, for
$k\in\{0,\ldots,d\}$ we define a $ k $-simplex $ T $ by
$ T = \conv(\{a_0,\ldots, a_k\}) \subset \R^d $, for some
$ a_0,\ldots,a_k \in \R^d $ which do not all lie in one affine
subspace of dimension $k-1$, and where
\begin{equation*}
  \conv (Y) := \setc{x = \sum_{y \in Y}\lambda_y y}{ \lambda_y>0 \, \text{ and }\, \sum_{y \in Y} \lambda_y = 1 }
\end{equation*}
denotes the open convex hull.  By $\snorm{T}$ we will denote the
$k$-dimensional Lebesgue measure of a $k$-simplex.  We consider a
simplicial mesh $ \calT $ on $ \domain $ of $d$-simplices,
i.e. $\calT$ satisfies that
$ \overline{\domain} = \bigcup_{T\in \calT} \overline{T} $ and
$ T\cap T' = \emptyset$, for all $ T\neq T' $.
We assume that $ \calT $ is a \emph{regular} partition, i.e. for all
distinct $T, T' \in\calT$ it holds that
$\overline{T}\cap\overline{T'}$ is the closure of a $k$-subsimplex of
$T$ for some $k\in\{0,\ldots,d-1\}$, i.e. there exist
$a_0,\ldots,a_d \in \domain$ such that $T = \conv(\{a_0,\ldots,a_d\})$
and
$\overline{T}\cap\overline{T'} =
\overline{\conv(\{a_0,\ldots,a_k\})}$.
The \emph{shape-regularity constant} $C_{\rm sh}:=C_{\rm sh}(\calT)$
of a simplicial partition $\calT$ of $\domain$ is
$ C_{\rm sh} :=\max_{T\in \calT}\tfrac{h_T }{\inradius_T} > 0 $.  Here
$h_T := \operatorname{diam}(T)$ and $\inradius_T$ is the radius of the
largest ball contained in $\overline{T}$.
Let $ \calV $ be the set of vertices of $ \calT $.  We also let
$ \calF,\calE $ be the sets of $ (d-1) $- and $ 1 $-subsimplices of
$ \calT $, whose elements are called \emph{faces} and \emph{edges},
respectively, that is
\begin{align*}
  \calF &:= \set{f \subset \overline{\domain} }{ \exists T = \conv(\{a_0,\ldots, a_d\})\in \calT, \exists i\in\{0,\ldots,d\}
          \text{ with } f = \conv(\{a_0,\ldots, a_{d}\}\backslash \{a_i\} )}, \\
  \calE &:=\set{e \subset \overline{\domain}}{\exists T = \conv(\{a_0,\ldots, a_d\}) \in \calT, \exists i,j\in\{0,\ldots,d\}, i\neq j,
          \text{ with } e = \conv(\{a_i,a_j \})}.
\end{align*}
We denote the boundary of $ \domain $ by $ \partial \domain $ and the
\emph{skeleton} of $ \calT $ by
$ \partial \calT := \bigcup_{T\in \calT} \partial T $.

\subsubsection{De Rham complex}
\label{sec:derham}
We write $ H^1(\domain)$, $ H^0(\divv,\domain)$,
$ H^0(\curl,\domain) $ to indicate the following Sobolev spaces
\begin{align*}
  H^1(\domain) &:= \{v \in L^2(\domain) \colon \nabla v \in [L^2(\domain)]^d \}, \\
  H^0(\curl,\domain)& := \{v \in [L^2(\domain)]^d \colon \curl v \in [L^2(\domain)]^d \} \qquad \text{ for } d = 2,3,\\
  H^0(\divv,\domain) & := \{v \in [L^2(\domain)]^d \colon \divv v \in L^2(\domain) \},
\end{align*}
where the two-dimensional $\curl$ is defined as
$\curl v = (-\partial_1 v_2, \partial_2 v_1)$.  These are Hilbert
spaces.  Specifically, let us assume that $\domain\subset \R^3$ is a
contractible Lipschitz\footnote{{That is with boundary parametrized
    locally by Lipschitz continuous maps \cite[Definition
    3.2]{ErnGuermondBookI2021}.}}  domain with connected boundary
$ \partial \domain $.
Then, it is well-known that the following de
Rham complex is an exact sequence 
(e.g.\ \cite[Proposition 16.14]{ErnGuermondBookI2021} and the references there):
\begin{equation}\label{derham}
  \xymatrix{
    \R\ar^-{i}[r] &
    H^1(\domain)\ar^-{\operatorname{grad}}[r] &
    H^0(\curl,\domain) \ar^-{\curl}[r] &
    H^0(\divv,\domain)  \ar^-\divv[r] &
    L^2(\domain) \ar^-o[r] & \{0\}.
  }
\end{equation}
Here, 
$i$ denotes an injection and 
$o$ denotes the
zero operator.

\subsubsection{First order discrete de Rham complex}
\label{sec:discderham}
Finite dimensional subspaces preserving this structure are usually
required to fit into a \emph{discrete de Rham complex} (e.g.\
\cite[Proposition 16.15]{ErnGuermondBookI2021})
\begin{equation}\label{discderham}
  \xymatrix{
    \R\ar^-{i}[r] &
    \So(\mathcal{T},\domain) \ar^-{\operatorname{grad}}[r] &
    \Ne(\mathcal{T},\domain) \ar^-{\curl}[r] &
    \RT(\mathcal{T},\domain) \ar^-\divv[r] &
    \Sz(\mathcal{T},\domain) \ar^-o[r] &
    \{0\}.
  }
\end{equation}
We next define these spaces.

Throughout, denote by $ \mathbb{P}_k $ the set of polynomials of
degree at most $ k \in \N \cup\{0\} $.  For a given regular
triangulation $\mathcal{T}$ of a domain $\domain$,
$ \So(\calT, \domain) $ 
is the class of continuous, piecewise linear (CPwL) functions on
$ \calT $, i.e.\
\begin{equation}\label{def:S1}
  \So(\calT, \domain) := \set{v \in H^1(\domain)}{v|_T \in \bbP_1, \ \forall T\in \calT } \subset  H^1(\domain).
\end{equation}
Moreover, $ \Sz(\calT,\domain) $ is the class of piecewise constant functions
on the partition $\calT$
\begin{equation}\label{def:S0}
  \Sz(\calT,\domain)
  :=
  \set{v \in L^2(\domain)}{v|_T \equiv c_T \in \R, \ \forall T\in \calT } \subset  L^2(\domain).
\end{equation}

Next, we recall $\RT(\mathcal{T},\domain)$,
the lowest order Raviart-Thomas space.
Define the vector-valued polynomial space
$ \mathbb{RT}_0 = (\mathbb{P}_0)^{d} \oplus x\mathbb{P}_0 $ and, for
all $f\in\calF$, let $ n_f $ denote a unit normal to the face $ f $.
Denote by $ [v\cdot n_f]_f $ the jump of the normal component of a
vector field across $ f $, that is
$$ [v\cdot n_f]_f(x_0) = \lim_{\epsilon\searrow 0} (v(x_0+\epsilon
n_f)- v(x_0-\epsilon n_f))\cdot n_f\qquad \text{for all}\quad x_0 \in f.$$  The
Raviart-Thomas finite element space of lowest order is
(e.g. \cite[Section 14.1]{ErnGuermondBookI2021})
\begin{align} \label{def:RT0} \RT(\calT,\domain) := \set{v\in
    (L^1(\domain))^d}{v|_T \in \mathbb{RT}_0 \ \forall T \in \calT
    \text{ and } [v\cdot n_f]_f = 0 \ \forall f\in \calF , f \subset
    \domain } .
\end{align}
This space has one degree of freedom per face $ f \in \calF $ and it
satisfies $ \RT(\calT,\domain)\subset H^0(\divv,\domain) $.

To define the lowest order \Ned space $\Ne(\mathcal{T},\domain)$, 
for $d=2$ define
$ \mathbb{NE}_0 = (\mathbb{P}_0)^{d} \oplus \mathbb{P}_0 ( -x_2, x_1
)$, and for $d=3$ define
$ \mathbb{NE}_0 = (\mathbb{P}_0)^{d} \oplus x\times (\mathbb{P}_0)^{d}
$.
Let $ [v\times n_f]_f $ denote the jump of the tangential component of
a vector field across $ f $, that is
$$ [v\times n_f]_f(x_0) = \lim_{\epsilon\searrow 0} (v(x_0+\epsilon
n_f)- v(x_0-\epsilon n_f))\times n_f  \qquad\text{for all}\quad x_0 \in f.$$  Then
the \Ned finite element space of lowest order \cite[Section
15.1]{ErnGuermondBookI2021} reads
\begin{equation} \label{def:N0} \Ne(\calT,\domain) := \set{v\in
    (L^1(\domain))^{d}}{v|_T \in \mathbb{NE}_0 \ \forall T \in \calT
    \text{ and } [v\times n_f]_f = 0 \ \forall f\in \calF , f\subset
    \domain} .
\end{equation}
This space has one degree of freedom per edge $ e \in \calE $ and it
satisfies $ \Ne(\calT,\domain) \subset H^0(\curl,\domain) $.  For
$d=2$, $\Ne(\calT,\domain)$ is closely related to
$\RT(\calT,\domain)$ (see \eqref{eq:NeRTtwodim}).
For each of the spaces in \eqref{discderham}, we state in Section
\ref{sec:nnfem} a basis of the space.
The FE spaces from \eqref{discderham} have the advantage of being
\emph{conforming}, i.e., they are finite dimensional spaces, each
strictly contained in the respective Sobolev space in \eqref{derham}.
Furthermore, the ($\calT$-dependent) projections
$\Pi_{\So},\Pi_{\Ne},\Pi_{\RT},\Pi_{\Sz}$ on these subspaces
introduced in \cite[Sec.\ 19.3]{ErnGuermondBookI2021} commute with
  the differential operators as shown in the following diagram
  \cite[Lemma 19.6]{ErnGuermondBookI2021}:
$$
\xymatrix{
  H^1(\domain)\ar^-{\operatorname{grad}}[r] \ar^{\Pi_{\So}}[d] &
  H^0(\curl,\domain) \ar^-{\curl}[r] \ar^{\Pi_{\Ne}}[d] &
  H^0(\divv,\domain) \ar^-\divv[r] \ar^{\Pi_{\RT}}[d] & L^2(\domain)
  \ar^{\Pi_{\Sz}}[d]
  \\
  \So(\mathcal{T},\domain) \ar^-{\operatorname{grad}}[r] &
  \Ne(\mathcal{T},\domain) \ar^-{\curl}[r] & \RT(\mathcal{T},\domain)
  \ar^-\divv[r] & \Sz(\mathcal{T},\domain)
}
$$
For these reasons we say that these spaces are \emph{de Rham
  compatible}.

These spaces also appear in the Helmholtz decomposition
of vector fields in bounded, contractible polyhedral domains
$\domain \subset \R^3$.
For every vector field $v \in [L^2(\domain)]^3$
there exist $\varphi \in H^1(\domain)$
and $\psi \in H^0(\curl,\domain) \cap H^0(\divv,\domain)$
such that $v = \gradd \varphi + \curl \psi$,
see e.g. \cite[Section 3.5]{ABDG1998}.

\section{Neural networks}
\label{sec:nndef}
To accommodate for both continuous components and discontinuous
components in the functions we want to emulate, we consider neural
networks where several different activation functions are used
throughout the network.  We define neural networks as a collection of
parameters and for each position in the network (also called neuron or
unit) we specify the activation function used.  Associated to such a
neural network is a function, called realization, which is the
iterated composition of affine transformations defined in terms of the
parameters and non-linear activation functions.

\subsection{Feedforward NNs}
\label{sec:ffnn}
  For $d,L\in\N$, a \emph{neural network $\Phi$} 
  with input dimension $d \geq 1$ and number of layers $L\geq 1$, 
  comprises a finite collection of activation 
  functions $\bsvarrho = \{\varrho_{\ell}\}_{\ell=1}^L$ 
  and a finite sequence of matrix-vector tuples, i.e.
  \begin{align*}
    \Phi = ((A_1,b_1,\varrho_1),(A_2,b_2,\varrho_2),\ldots,(A_L,b_L,\varrho_L)).
  \end{align*}

  For $N_0 := d$ and \emph{numbers of neurons $N_1,\ldots,N_L\in\N$ per layer}, 
  for all $\ell=1,\ldots, L$ it holds that
  $A_\ell\in\bbR^{N_\ell \times N_{\ell-1} }$ and
  $b_\ell\in\bbR^{N_\ell}$, and that $\varrho_\ell$ is a list of
  length $N_\ell$ of \emph{activation functions}
  $(\varrho_\ell)_i: \R\to\R$, $i=1,\ldots,N_\ell$, acting on node $i$
  in layer $\ell$.

  The \emph{realization} of $\Phi:\R^{N_0} \to \R^{N_L}$ as a map is
  the function
  \begin{align*}
    \realiz{\Phi}: \R^d\to\R^{N_L} : x \to x_L,
  \end{align*}
  where
  \begin{align*}
    x_0 & := x,
    \\
    x_\ell & := \varrho_\ell( A_\ell x_{\ell-1} + b_\ell ),
             \qquad\text{ for }\ell=1,\ldots,L-1,
    \\
    x_L & := A_L x_{L-1} + b_L.
  \end{align*}
  Here, for $\ell=1,\ldots,L-1$, the list of activation functions
  $\varrho_\ell$ of length $N_\ell$ is effected componentwise: for
  $y = (y_1,\ldots,y_{N_\ell})\in\bbR^{N_\ell}$ we denote
  $\varrho_\ell(y) = ( (\varrho_\ell)_1(y_1), \ldots,
  (\varrho_\ell)_{N_\ell}(y_{N_\ell}) )$.
  I.e., $(\varrho_\ell)_i$ is the activation function applied in
  position $i$ of layer $\ell$.

  We call the layers indexed by $\ell=1,\ldots,L-1$ \emph{hidden layers}, 
  in those layers activation functions are applied.  No
  activation is applied in the last layer of the NN.  For consistency
  of notation, we define $\varrho_L := \Id_{\R^{N_L}}$.

  We refer to $\depth(\Phi) := L$ as the \emph{depth} of $\Phi$.  
  For $\ell=1,\ldots,L$ we denote by
  $\size_\ell(\Phi) := \norm[0]{A_\ell} + \norm[0]{b_\ell} $ the
  \emph{size of layer $\ell$}, which is the number of nonzero
  components in the weight matrix $A_\ell$ and the bias vector
  $b_\ell$, and call $\size(\Phi) := \sum_{\ell=1}^L \size_\ell(\Phi)$
  the \emph{size} of $\Phi$.  Furthermore, we call $d$ and $N_L$ the
  \emph{input dimension} and the \emph{output dimension}, and denote
  by $\sizefirst(\Phi) := \size_1(\Phi)$ and
  $\sizelast(\Phi) := \size_L(\Phi)$ the size of the first and the
  last layer, respectively.

Our networks will use two different activation functions.  Firstly, we
use the \emph{Rectified Linear Unit} (\emph{ReLU}) activation
\begin{equation}
  \label{eq:reludef}
  \relu(x) = \max\{ 0, x \}.
\end{equation}
We will often use the elementary identities
$ \relu(x) + \relu(-x) = |x| $, $ \relu(x) - \relu(-x) = x $ to
construct composite functions.
Networks which only
contain ReLU activations realize continuous, piecewise linear
functions. By \emph{ReLU NNs} we refer to NNs which only have ReLU
activations, including networks of depth $1$, which do not have hidden
layers and realize affine transformations.
Secondly, for the emulation of discontinuous functions, we 
additionally
use the \emph{Binary Step Unit} (\emph{BiSU}) activation
\begin{align}
  \label{eq:heavidef}
  \heavi(x) =
  \begin{cases}
    0 &\text{if } x\le0,
    \\
    1 &\text{if } x>0,
  \end{cases}
\end{align}
which is also called \emph{Heaviside} function.
Alternatively, the BiSU can be defined to equal $\tfrac12$ in $x=0$.
That function, which we denote by $\heavii$, can be expressed
in terms of $\heavi$ via
$2 \heavii(x) = \heavi(x) + 1 - \heavi(-x)$ for all $x\in\R$.  Hence,
for every NN with $\heavii$ as activation function, there exists a
network with $\heavi$-activations instead, with proportional depth and
size.

\subsection{Operations on NNs}
\label{sec:nnoperations}
In the following sections, we will construct NNs from smaller networks
using a ReLU-based \emph{calculus of NNs}, which we now recall from
\cite{PV2018}.
\begin{proposition}[Parallelization of NNs {{\cite[Definition
      2.7]{PV2018}}}]
  \label{prop:parallel}
  For $d,L\in\N$ let $\Phi^1 = $ \linebreak
  $ \left
    ((A^{(1)}_1,b^{(1)}_1,\varrho^{(1)}_1),\ldots,(A^{(1)}_L,b^{(1)}_L,\varrho^{(1)}_L)\right
  ) $ and
  $\Phi^2 = \left
    ((A^{(2)}_1,b^{(2)}_1,\varrho^{(2)}_1),\ldots,(A^{(2)}_L,b^{(2)}_L,\varrho^{(2)}_L)\right
  ) $ be two NNs with input dimension $d$ and depth $L$.  Let the
  \emph{parallelization} $\Parallel{\Phi^1,\Phi^2}$ of $\Phi^1$ and
  $\Phi^2$ be defined by
  \begin{align*}
    \Parallel{\Phi^1,\Phi^2} := &\, ((A_1,b_1,\varrho_1),\ldots,(A_L,b_L,\varrho_L)),
    &&
    \\
    A_1 = &\, \begin{pmatrix} A^{(1)}_1 \\ A^{(2)}_1 \end{pmatrix},
    \quad
    A_\ell = \begin{pmatrix} A^{(1)}_\ell & 0\\0&A^{(2)}_\ell \end{pmatrix},
    &&
       \text{ for } \ell = 2,\ldots L,
    \\
    b_\ell = &\, \begin{pmatrix} b^{(1)}_\ell \\ b^{(2)}_\ell \end{pmatrix},
    \quad
    \varrho_\ell = \begin{pmatrix} \varrho^{(1)}_\ell \\ \varrho^{(2)}_\ell \end{pmatrix},
                                &&
                                   \text{ for } \ell = 1,\ldots L.
  \end{align*}

  Then,
  \begin{align*}
    \realiz{\Parallel{\Phi^1,\Phi^2}} (x)
    = &\, ( \realiz{\Phi^1}(x), \realiz{\Phi^2}(x) ),
        \quad
        \text{ for all } x\in\R^d,
    \\
    \depth(\Parallel{\Phi^1,\Phi^2}) = L, &
                                            \qquad
                                            \size(\Parallel{\Phi^1,\Phi^2}) = \size(\Phi^1) + \size(\Phi^2)
                                                  .
  \end{align*}

\end{proposition}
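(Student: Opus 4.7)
The plan is to prove the three claims---the realization identity, the depth identity, and the size identity---by directly unpacking the definitions. The central observation is that the block-diagonal structure of $A_\ell$ for $\ell \geq 2$, combined with the componentwise application of the concatenated activation list $\varrho_\ell$, decouples the propagation through the two sub-networks after the first layer.

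First, I would establish the realization identity by induction on the layer index $\ell \in \{0,1,\ldots,L\}$. Let $x_\ell^{(j)}$ denote the hidden state of $\Phi^j$ (for $j\in\{1,2\}$) on input $x\in\R^d$, and let $x_\ell$ denote the hidden state of $\Parallel{\Phi^1,\Phi^2}$ on the same input. The inductive hypothesis is $x_\ell = (x_\ell^{(1)}, x_\ell^{(2)})$. The base case $\ell = 0$ holds since $x_0 = x = x_0^{(j)}$ for both $j$. For $\ell = 1$, the vertical stacking of $A_1^{(1)}, A_1^{(2)}$ applied to the common input $x$, combined with the concatenated bias $b_1$ and the list $\varrho_1$ acting componentwise, yields $(x_1^{(1)}, x_1^{(2)})$. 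For the step $\ell \mapsto \ell+1$ with $\ell \geq 1$, the block-diagonal form of $A_{\ell+1}$ gives $A_{\ell+1} x_\ell = (A_{\ell+1}^{(1)} x_\ell^{(1)}, A_{\ell+1}^{(2)} x_\ell^{(2)})$ by the inductive hypothesis, and componentwise application of $\varrho_{\ell+1}$ (followed by addition of the concatenated bias) produces the claimed concatenation. The output layer $\ell = L$ (affine only, no activation applied) is handled by the same argument with $\varrho_L = \Id$.

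The depth claim is immediate, since by construction $\Parallel{\Phi^1,\Phi^2}$ is a sequence of exactly $L$ matrix-vector-activation tuples. For the size identity, I would count nonzero entries layer by layer. In the first layer, the vertical stacking yields $\size_1(\Parallel{\Phi^1,\Phi^2}) = \size_1(\Phi^1) + \size_1(\Phi^2)$. For $\ell \geq 2$, the block-diagonal form of $A_\ell$ has $\norm[0]{A_\ell}$ equal to the sum of $\norm[0]{A_\ell^{(1)}}$ and $\norm[0]{A_\ell^{(2)}}$ (the off-diagonal zero blocks contribute nothing), and $b_\ell$ is the concatenation of $b_\ell^{(1)}, b_\ell^{(2)}$, giving the analogous additivity. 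Summing over all layers produces the claimed identity.

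There is no substantive obstacle---the proposition is essentially a definitional verification. The only points requiring care are the separate treatment of the first layer (inputs shared, so weights are vertically stacked rather than block-diagonal) and the implicit requirement that $\Phi^1$ and $\Phi^2$ share both the input dimension $d$ and the depth $L$; padding networks to a common depth is the task of separate identity-network constructions in the NN calculus of \cite{PV2018} and is not needed here.
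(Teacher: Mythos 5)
Your proof is correct: the layerwise induction using the vertically stacked first layer and block-diagonal subsequent layers, together with the layer-by-layer count of nonzero entries, is exactly the standard definitional verification. The paper itself states this proposition without proof, citing \cite[Definition 2.7]{PV2018}, and your argument matches the one given there.
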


The parallelization of more than two NNs is done by repeated application of
Proposition \ref{prop:parallel}.
\begin{proposition}[Sum of NNs]
  \label{prop:sum}

  For $d,N,L\in \N$ let
  $ \Phi^1 = \left
    ((A^{(1)}_1,b^{(1)}_1,\varrho^{(1)}_1),\ldots,(A^{(1)}_L,b^{(1)}_L,\varrho^{(1)}_L)\right
  ) $ and
  $\Phi^2 = \Big
  ((A^{(2)}_1,b^{(2)}_1,\varrho^{(2)}_1),\ldots,(A^{(2)}_L,b^{(2)}_L,\varrho^{(2)}_L)\Big)
  $
  be two NNs with input dimension $d$, output dimension $ N $ and
  depth $L$.  Let the \emph{sum} $\Phi^1+\Phi^2$ of $\Phi^1$ and
  $\Phi^2$ be defined by
  \begin{align*}
    \Phi^1 + \Phi^2 := &\, ((A_1,b_1,\varrho_1),\ldots,(A_L,b_L,\varrho_L)),
    &&
    \\
    A_1 =& \begin{pmatrix} A^{(1)}_1 \\ A^{(2)}_1 \end{pmatrix},
    \quad
    b_1 = \begin{pmatrix} b^{(1)}_1 \\ b^{(2)}_1 \end{pmatrix},
    \quad
    \varrho_1 = \begin{pmatrix} \varrho^{(1)}_1 \\ \varrho^{(2)}_1 \end{pmatrix},
    \\
    A_\ell =& \begin{pmatrix} A^{(1)}_\ell & 0\\0&A^{(2)}_\ell \end{pmatrix},
                                                   \quad
                                                   b_\ell =  \begin{pmatrix} b^{(1)}_\ell \\ b^{(2)}_\ell \end{pmatrix},
    \quad
    \varrho_\ell = \begin{pmatrix} \varrho^{(1)}_\ell \\ \varrho^{(2)}_\ell \end{pmatrix},
                       &&
                          \text{ for } \ell = 2,\ldots L-1.
    \\
    A_L =& \begin{pmatrix} A^{(1)}_L & A^{(2)}_L \end{pmatrix},
                                       \quad
                                       b_L = b^{(1)}_L + b^{(2)}_L, 
                                       \quad
                                       \varrho_L = \Id_{\R^N}.
  \end{align*}

  Then,
  \begin{align*}
    \realiz{\Phi^1+\Phi^2} (x)
    = &\, \realiz{\Phi^1}(x) + \realiz{\Phi^2}(x),
        \quad
        \text{ for all } x\in\R^d,
    \\
    \depth(\Phi^1+\Phi^2) = &\, L, \qquad
                              \size(\Phi^1+\Phi^2)
                              \leq \size(\Phi^1) + \size(\Phi^2)
                                                                .
  \end{align*}
\end{proposition}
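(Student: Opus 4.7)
The plan is to observe that, through the hidden layers, the construction of $\Phi^1+\Phi^2$ is identical to the parallelization $\Parallel{\Phi^1,\Phi^2}$ of Proposition~\ref{prop:parallel}; the only difference is in the (linear) output layer, where the two output streams are added rather than concatenated. Accordingly, I would prove the realization identity by induction on $\ell$, showing that for $\ell=1,\ldots,L-1$ the intermediate activation $x_\ell$ of $\Phi^1+\Phi^2$ equals the concatenation $(x_\ell^{(1)}, x_\ell^{(2)})$ of the corresponding activations of $\Phi^1$ and $\Phi^2$. The base case $\ell=1$ follows because $A_1 x = (A_1^{(1)} x, A_1^{(2)} x)$, $b_1 = (b_1^{(1)}, b_1^{(2)})$, and $\varrho_1$ is applied componentwise, so $x_1 = (\varrho_1^{(1)}(A_1^{(1)}x + b_1^{(1)}), \varrho_1^{(2)}(A_1^{(2)}x + b_1^{(2)})) = (x_1^{(1)}, x_1^{(2)})$. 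The induction step is identical, using the block-diagonal structure of $A_\ell$ for $\ell=2,\ldots,L-1$.

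For the output layer, I would then simply compute
\begin{align*}
\realiz{\Phi^1+\Phi^2}(x)
= A_L x_{L-1} + b_L
= \begin{pmatrix} A_L^{(1)} & A_L^{(2)} \end{pmatrix} \begin{pmatrix} x_{L-1}^{(1)} \\ x_{L-1}^{(2)} \end{pmatrix} + b_L^{(1)} + b_L^{(2)},
\end{align*}
which equals $(A_L^{(1)} x_{L-1}^{(1)} + b_L^{(1)}) + (A_L^{(2)} x_{L-1}^{(2)} + b_L^{(2)}) = \realiz{\Phi^1}(x) + \realiz{\Phi^2}(x)$. The depth claim $\depth(\Phi^1+\Phi^2)=L$ is immediate from the definition, since exactly $L$ weight-bias-activation triples are specified.

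The size bound requires a short count. For the first layer, $A_1$ stacks $A_1^{(1)}$ and $A_1^{(2)}$ vertically and $b_1$ concatenates $b_1^{(1)}$ and $b_1^{(2)}$, so $\size_1(\Phi^1+\Phi^2) = \size_1(\Phi^1) + \size_1(\Phi^2)$; the same holds for each hidden layer $\ell=2,\ldots,L-1$ by the block-diagonal form of $A_\ell$. For the output layer, horizontal concatenation gives $\snorm[0]{A_L} = \snorm[0]{A_L^{(1)}} + \snorm[0]{A_L^{(2)}}$, whereas the bias satisfies $\snorm[0]{b_L^{(1)} + b_L^{(2)}} \leq \snorm[0]{b_L^{(1)}} + \snorm[0]{b_L^{(2)}}$, the inequality arising from possible cancellations of nonzero entries. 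Summing over $\ell$ yields the advertised bound.

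There is no real obstacle here; the statement is essentially a bookkeeping result, and the only nontrivial subtlety is precisely the source of the inequality (rather than equality) in the size bound, namely the additive combination of bias vectors in the last layer.
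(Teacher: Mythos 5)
Your proof is correct and is exactly the direct verification the paper leaves implicit: Proposition~\ref{prop:sum} is stated without proof as part of the recalled NN calculus, and the intended argument is precisely your layer-by-layer induction showing the hidden states concatenate, followed by the additive output layer and the size count. Your identification of the bias addition $b_L^{(1)}+b_L^{(2)}$ in the last layer as the sole source of the inequality (rather than equality) in the size bound is also the right observation.
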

Next, we define the sparse concatenation of two NNs, which realizes
exactly the composition of the realizations of the two networks using
the fact that we allow the ReLU activation.  See Figure
\ref{fig:sparse-conc} for a sketch of the NN structure.  The sparse
concatenation is a construction which allows to bound the size of the
concatenation as 2 times the sum of the sizes of the individual
networks. This bound does not hold if we combine the affine
transformation of the output layer of $ \Phi^2 $ with the affine
transformation of the input layer of $ \Phi^1 $.
\begin{proposition}[Sparse Concatenation of NNs {{\cite[Remark
      2.6]{PV2018}}}]
  \label{prop:concat}
  For $L^{(1)},L^{(2)}\in\N$, let
  $ \Phi^1 = $
  $ \left
    ((A^{(1)}_1,b^{(1)}_1,\varrho^{(1)}_1),\ldots,(A^{(1)}_{L^{(1)}},b^{(1)}_{L^{(1)}},\varrho^{(1)}_{L^{(1)}})\right
  ) $ and
  $\Phi^2 = \Big
  ((A^{(2)}_1,b^{(2)}_1,\varrho^{(2)}_1),\ldots,(A^{(2)}_{L^{(2)}},b^{(2)}_{L^{(2)}},\varrho^{(2)}_{L^{(2)}})\Big)
  $
  be two NNs with depths $L^{(1)}$ and $L^{(2)}$, respectively, such
  that $N^{(2)}_{L^{(2)}} = N^{(1)}_0$, i.e. the output dimension of
  $\Phi^2$ equals the input dimension of $\Phi^1$.  Let the
  \emph{sparse concatenation} $\Phi^1\sconc \Phi^2$ of $\Phi^1$ and
  $\Phi^2$ be a NN of depth $L := L^{(1)} + L^{(2)}$ defined by
  \begin{align*}
    \Phi^1 \sconc \Phi^2 := &\, ((A_1,b_1,\varrho_1),\ldots,(A_L,b_L,\varrho_L)),
    \\
    (A_\ell,b_\ell,\varrho_\ell) = &\, (A^{(2)}_\ell,b^{(2)}_\ell,\varrho^{(2)}_\ell),
                                   &&
                                      \text{ for }\ell=1,\ldots,L^{(2)}-1,
    \\
    A_{L^{(2)}} = &\, \begin{pmatrix} A^{(2)}_{L^{(2)}} \\ -A^{(2)}_{L^{(2)}} \end{pmatrix},
    \quad
    b_{L^{(2)}} = \begin{pmatrix} b^{(2)}_{L^{(2)}} \\ -b^{(2)}_{L^{(2)}} \end{pmatrix},
    \quad
    \varrho_{L^{(2)}} = \begin{pmatrix} \relu \\ \vdots \\ \relu \end{pmatrix},
    \\
    A_{L^{(2)}+1} = &\, \begin{pmatrix} A^{(1)}_1 & -A^{(1)}_1 \end{pmatrix},
                                                    \quad
                                                    b_{L^{(2)}+1} = b^{(1)}_1,
                                                    \quad
                                                    \varrho_{L^{(2)}+1} = \varrho^{(1)}_1,
    \\
    (A_\ell,b_\ell,\varrho_\ell) = &\, (A^{(1)}_{\ell-L^{(2)}},b^{(1)}_{\ell-L^{(2)}}, \varrho^{(1)}_{\ell-L^{(2)}}),
                                   &&
                                      \text{ for }\ell=L^{(2)}+2,\ldots,L^{(1)}+L^{(2)}.
  \end{align*}

  Then, it holds that
  \begin{align*}
    \realiz{\Phi^1 \sconc \Phi^2} = &\, \realiz{\Phi^1} \circ \realiz{\Phi^2},
                                      \qquad
                                      \depth(\Phi^1 \sconc \Phi^2) = \, L^{(1)} + L^{(2)},
    \\
    \size(\Phi^1 \sconc \Phi^2) \leq &\, \size(\Phi^1) + \sizefirst(\Phi^1) + \sizelast(\Phi^2) + \size(\Phi^2)
                                       \leq 2 \size(\Phi^1) + 2 \size(\Phi^2)
                                                               .
  \end{align*}

\end{proposition}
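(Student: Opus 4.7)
The plan is to verify the three claims—composition of realizations, depth $L^{(1)}+L^{(2)}$, and the size bound—by tracing the layer-by-layer definition of $\Phi^1\sconc\Phi^2$, using the elementary identity $\relu(y)-\relu(-y)=y$ to show that the two ``coupling layers'' at positions $L^{(2)}$ and $L^{(2)}+1$ together implement the composition of the output affine map of $\Phi^2$ with the input affine-plus-activation map of $\Phi^1$ without altering the input to $\varrho^{(1)}_1$.

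Following Section~\ref{sec:ffnn}, I would introduce intermediate vectors $x_0,x_1,\ldots,x_L$ for $\Phi^1\sconc\Phi^2$ and establish by induction on $\ell=1,\ldots,L^{(2)}-1$ that $x_\ell$ coincides with the $\ell$-th hidden state of $\Phi^2$ evaluated at $x$; this is immediate since $(A_\ell,b_\ell,\varrho_\ell)=(A^{(2)}_\ell,b^{(2)}_\ell,\varrho^{(2)}_\ell)$ throughout this range. Writing $y := A^{(2)}_{L^{(2)}} x_{L^{(2)}-1} + b^{(2)}_{L^{(2)}} = \realiz{\Phi^2}(x)$, the stacked affine map at layer $L^{(2)}$ together with componentwise ReLU yields $x_{L^{(2)}}=(\relu(y),\relu(-y))^\top$. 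At layer $L^{(2)}+1$, the action of $(A^{(1)}_1,-A^{(1)}_1)$ on $x_{L^{(2)}}$ equals $A^{(1)}_1(\relu(y)-\relu(-y))=A^{(1)}_1 y$, so after adding $b^{(1)}_1$ and applying $\varrho^{(1)}_1$ we obtain precisely the first hidden state of $\Phi^1$ at input $y=\realiz{\Phi^2}(x)$. A second induction on $\ell=L^{(2)}+2,\ldots,L$ then identifies the remaining $x_\ell$ with the subsequent hidden states of $\Phi^1$, so $\realiz{\Phi^1\sconc\Phi^2}(x)=\realiz{\Phi^1}(\realiz{\Phi^2}(x))$.

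For the depth, counting directly gives $(L^{(2)}-1)+1+1+(L^{(1)}-1)=L^{(1)}+L^{(2)}$. For the size, I would sum the layerwise contributions: layers $1,\ldots,L^{(2)}-1$ contribute $\size(\Phi^2)-\sizelast(\Phi^2)$; layer $L^{(2)}$ contributes $2\sizelast(\Phi^2)$ because the matrix $(A^{(2)}_{L^{(2)}};-A^{(2)}_{L^{(2)}})$ and bias $(b^{(2)}_{L^{(2)}};-b^{(2)}_{L^{(2)}})$ each double the nonzero count; layer $L^{(2)}+1$ contributes $2\|A^{(1)}_1\|_0+\|b^{(1)}_1\|_0\leq 2\sizefirst(\Phi^1)$; and layers $L^{(2)}+2,\ldots,L$ contribute $\size(\Phi^1)-\sizefirst(\Phi^1)$. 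Summing telescopes to $\size(\Phi^1)+\sizefirst(\Phi^1)+\sizelast(\Phi^2)+\size(\Phi^2)$, and the coarser bound $\leq 2\size(\Phi^1)+2\size(\Phi^2)$ follows from $\sizefirst(\Phi^1)\leq\size(\Phi^1)$ and $\sizelast(\Phi^2)\leq\size(\Phi^2)$.

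The only real subtlety lies in the bookkeeping at the two coupling layers: one must be careful that the bias at layer $L^{(2)}+1$ is $b^{(1)}_1$ (not a doubled version), that $\varrho^{(1)}_1$ may be any of the admissible activations (ReLU or BiSU) without affecting the argument, and that the inserted ReLU at layer $L^{(2)}$ is applied in order to exploit $\relu(y)-\relu(-y)=y$ regardless of the sign of $y$. Once these indexing details are handled cleanly, the proof reduces to direct substitution into Section~\ref{sec:ffnn}.
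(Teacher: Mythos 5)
Your proof is correct and is exactly the standard argument: the paper itself gives no proof of this proposition (it is recalled from \cite{PV2018}), and your layer-by-layer verification via the identity $\relu(y)-\relu(-y)=y$, together with the per-layer size accounting, is precisely how the cited reference establishes it. The bookkeeping at the two coupling layers, including the factor of $2$ on $\sizelast(\Phi^2)$ and the bound $2\|A^{(1)}_1\|_0+\|b^{(1)}_1\|_0\leq 2\sizefirst(\Phi^1)$, is handled correctly.
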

\begin{figure}
  \centering \includegraphics[width=0.7\textwidth]{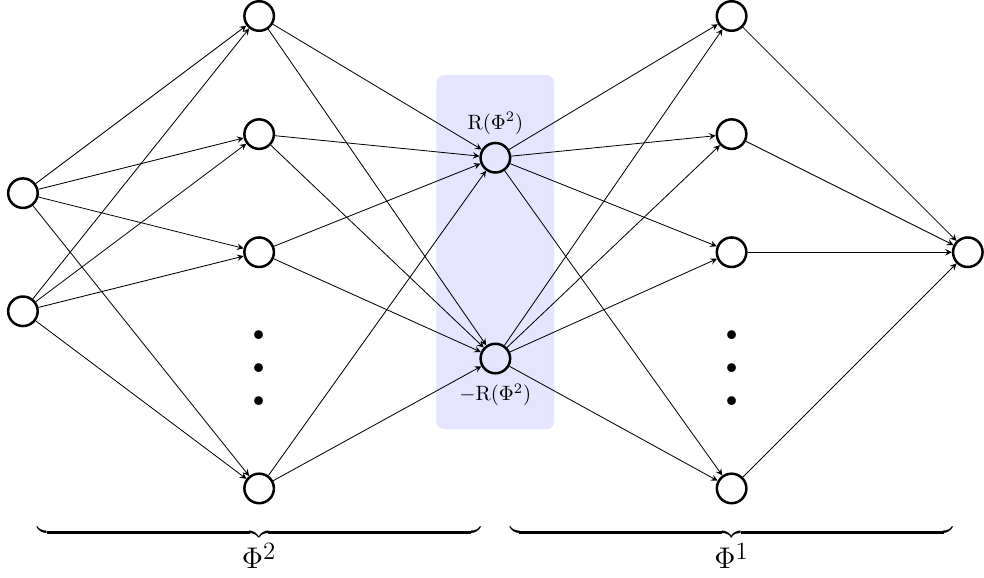}
  \caption{Illustration of Proposition \ref{prop:concat}.  In blue,
    the additional ReLU layer that evaluates the positive and negative
    parts of $ \realiz{\Phi^2} $. }
  \label{fig:sparse-conc}
\end{figure}

Proposition \ref{prop:parallel}
only applies to networks of equal depth.  To parallelize two networks
of unequal depth, the shallowest can be concatenated with a network
that emulates the identity using Proposition \ref{prop:concat}.  One
example of ReLU NNs that emulate the identity is provided by the
following proposition.
\begin{proposition}[ReLU NN emulation of $\Id_{\R^d}$ {{\cite[Remark
      2.4]{PV2018}}}]
  \label{prop:idnn}
  For all $d,L\in\N$, there exists a ReLU NN $\Phi^{\Id}_{d,L}$ with
  input dimension $d$, output dimension $d$ and depth $L$ which
  satisfies $\realiz{\Phi^{\Id}_{d,L}} = \Id_{\R^d}$,
  $\depth(\Phi^{\Id}_{d,L}) = L$ and
  $\size(\Phi^{\Id}_{d,L}) \leq 2dL$.
\end{proposition}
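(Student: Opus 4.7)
The plan is to exploit the elementary identity $x = \relu(x) - \relu(-x)$ for all $x \in \R$, together with the fact that $\relu$ acts as the identity on the nonnegative reals, so that once a value has been split into its positive and negative parts it can be propagated through further ReLU layers without distortion.

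First I would dispose of the degenerate case $L=1$. A NN of depth $1$ has no hidden layer, and its realization is simply the affine map $x \mapsto A_1 x + b_1$. Choosing $A_1 = I_d$ and $b_1 = 0$ yields $\realiz{\Phi^{\Id}_{d,1}} = \Id_{\R^d}$ with size $d \leq 2d$, as required.

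For $L\ge 2$ I would construct $\Phi^{\Id}_{d,L}$ explicitly, with $L-1$ hidden layers each of width $2d$. In the first hidden layer the weight matrix $A_1 \in \R^{2d\times d}$ maps $x_i \mapsto (x_i,-x_i)$ and, after applying $\relu$, produces the vector $(\relu(x_i),\relu(-x_i))_{i=1}^d$; this contributes $2d$ nonzero parameters. For each intermediate hidden layer $\ell=2,\dots,L-1$, I take $A_\ell = I_{2d}$ and $b_\ell = 0$: since the inputs to this layer are nonnegative, $\relu$ acts as the identity, and the $2d$ stored values pass through unchanged, at a cost of $2d$ nonzero parameters per layer. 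In the output layer, $A_L \in \R^{d\times 2d}$ recombines the two parts via $(\relu(x_i),\relu(-x_i)) \mapsto \relu(x_i)-\relu(-x_i) = x_i$, again with $2d$ nonzero entries and zero bias.

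Summing the per-layer sizes gives $\size(\Phi^{\Id}_{d,L}) \leq 2dL$, and the composition identity on input, propagation, and output layers shows that $\realiz{\Phi^{\Id}_{d,L}}=\Id_{\R^d}$. The argument is essentially bookkeeping; the only subtlety worth stating carefully is the observation that after the first hidden layer all activations are nonnegative, which is precisely what allows $\relu$ to be invisible in the intermediate layers and keeps the width at $2d$ rather than doubling at each depth.
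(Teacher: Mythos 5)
Your construction is correct and is exactly the standard one from the cited reference \cite[Remark 2.4]{PV2018}, which the paper invokes without reproducing a proof: split each coordinate via $x_i = \relu(x_i) - \relu(-x_i)$, propagate the nonnegative parts through identity-weighted hidden layers, and recombine in the output layer, giving size exactly $2dL$. The degenerate case $L=1$ and the size bookkeeping are handled correctly, so there is nothing to add.
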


\subsection{Expression of specific functions}
\label{sec:nnspecificfct}

In the following, we will need ReLU NNs which emulate the minimum or
maximum of $d\in\N$ inputs.  
These are provided in Lemma \ref{lem:minmaxrelu}.  
We also need to multiply values from a bounded
interval $[-\kappa,\kappa]$ for $\kappa>0$ by values from the discrete
set
$\{0, 1\}$, which is the range of the BiSU defined in
\eqref{eq:heavidef}.  A ReLU NN which emulates such multiplications
exactly is constructed in the proof of Proposition
\ref{prop:multbystep-alt} below.

\begin{lemma}[ReLU NN emulation of $\min$ and $\max$, {{\cite[Proof
      Theorem 3.1]{HLXZ2020}}}]
  \label{lem:minmaxrelu}
  For all $d\in\N$, there exist ReLU NNs $\Phi^{\max}_d$ and
  $\Phi^{\min}_d$ which satisfy
  \begin{align*}
    \realiz{\Phi^{\max}_d} (x) = &\, \max\{x_1,\ldots,x_d\},
    &&
       \text{ for all } x\in\R^d,
    \\
    \realiz{\Phi^{\min}_d} (x) = &\, \min\{x_1,\ldots,x_d\},
    &&
       \text{ for all } x\in\R^d,
    \\
    \depth(\Phi^{\max}_d) = &\, \depth(\Phi^{\min}_d) \leq 2 + \log_2(d), \qquad
                              \size(\Phi^{\max}_d) = \, \size(\Phi^{\min}_d) \leq Cd 
                                                             .
  \end{align*}
  Here, the constant $C>0$ is independent of $d$ and of the NN sizes
  and depths.
\end{lemma}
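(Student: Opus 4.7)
The plan is to first build $\Phi_2^{\max}$ explicitly and then assemble $\Phi_d^{\max}$ by a balanced binary tree of pairwise maxes; $\Phi_d^{\min}$ then follows by negation.

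For the base case $d=2$, I would use the identity
\[
\max(x,y) = \tfrac{1}{2}(x+y) + \tfrac{1}{2}|x-y| = \tfrac{1}{2}(x+y) + \tfrac{1}{2}\bigl(\relu(x-y)+\relu(y-x)\bigr).
\]
This realizes $\max$ as a depth-$2$ ReLU NN with one hidden layer of width $2$ computing $\relu(x-y)$ and $\relu(y-x)$, followed by a linear output layer combining them with $x$ and $y$ (the $x,y$ themselves can be propagated through the hidden layer by four additional neurons using $t = \relu(t)-\relu(-t)$, or reconstructed from the two ReLU outputs via $x-y = \relu(x-y)-\relu(y-x)$). In either case the total size is bounded by an absolute constant independent of anything else.

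For the inductive step, I would split the $d$ inputs into $\lceil d/2\rceil$ pairs (leaving one unpaired coordinate if $d$ is odd), apply $\Phi_2^{\max}$ to each pair in parallel via Proposition~\ref{prop:parallel}, and pass any unpaired coordinate through the block using $\Phi^{\Id}_{1,2}$ from Proposition~\ref{prop:idnn}. Iterating this ``halving block'' $\lceil\log_2 d\rceil$ times reduces the computation to a single scalar output. To achieve depth $\le 2+\log_2 d$ rather than the naive $2\lceil\log_2 d\rceil$ obtained by blindly invoking sparse concatenation, I would merge the linear output layer of one halving block with the linear input layer of the next (legitimate since both are affine with no activation in between), so that each additional halving contributes exactly one new ReLU layer. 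This yields $\depth(\Phi^{\max}_d) \le 1+\lceil\log_2 d\rceil \le 2+\log_2 d$. Counting nonzero weights level by level and summing the geometric series $d + d/2 + d/4 + \cdots$ gives $\size(\Phi^{\max}_d) \le Cd$ for some absolute constant $C$.

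Finally, $\Phi^{\min}_d$ is obtained from the identity $\min(x_1,\ldots,x_d) = -\max(-x_1,\ldots,-x_d)$: the two sign flips are absorbed into the first affine map (negating each input) and the last affine map (negating the output) of $\Phi^{\max}_d$, so depth and size are unchanged. The only genuine subtlety, and really the only place where the argument could go wrong, is the careful bookkeeping when stacking the halving blocks so that the $\log_2 d$ term in the depth bound does not pick up a multiplicative constant from naive use of Proposition~\ref{prop:concat}; performing the concatenation by hand via direct merging of adjacent affine layers avoids this issue cleanly.
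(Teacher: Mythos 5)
Your proposal is correct and follows essentially the same route as the paper, which states only the $d=2$ base case explicitly (Remark~\ref{rmk:EstMinMax}, using the slightly leaner identity $\max(x,y)=\relu(x-y)+\relu(y)-\relu(-y)$ with three hidden neurons) and otherwise defers to the cited binary-tree construction of \cite{HLXZ2020}; your handling of the depth bookkeeping by merging adjacent affine layers, and of $\min$ by double negation, matches what is needed. One small caveat: of your two parenthetical options for getting $\tfrac12(x+y)$ to the output layer, only the first (four extra neurons carrying $\relu(\pm x),\relu(\pm y)$) works, since $\relu(x-y)$ and $\relu(y-x)$ alone determine $x-y$ but not $x+y$.
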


In space dimension $d=1$, we may take
$\Phi^{\min}_1 := \Phi^{\max}_1 := \Phi^{\Id}_{1,2}$.

\begin{remark}\label{rmk:EstMinMax}
  The network $\Phi_d^{\max}$ is obtained by repeated applications of
  $\Phi_2^{\max}$, which itself can for instance be constructed as
  \begin{equation*}
    \Phi^{\max}_2 :=\, \left(
      \left( \begin{pmatrix} 1 & -1 \\ 0 & 1 \\ 0 & -1 \end{pmatrix},
        \begin{pmatrix} 0 \\ 0 \\ 0\end{pmatrix} ,
        \begin{pmatrix} \relu \\ \relu \\ \relu \end{pmatrix} \right) ,
      \left( \begin{pmatrix} 1 & 1 & -1 \end{pmatrix},  0 ,
        \Id_\R \right)
    \right).
  \end{equation*}
  We point out that this construction of $\Phi_2^{\max}$ leads to a
  slightly more efficient representation of $\Phi_d^{\max}$ than the
  one given in \cite[Theorem 3.1]{HLXZ2020}, as it requires less
  neurons, weights and biases. However, this will merely improve the
  constant $C$ in Lemma \ref{lem:minmaxrelu}, but not the stated
  asymptotic $d$-dependence of $L(\Phi_2^{\max})$ and
  $M(\Phi_2^{\max})$. The remark applies verbatim to $\min$ networks.
\end{remark}
\begin{remark}[min/max with recurrent nets]\label{rmk:EstMinMax2}
  The $d$-dependence can be completely avoided by admitting recurrent
  neural nets (RNNs), i.e., RNNs can express the maximum of $d$ inputs
  with a network of size, depth and width $O(1)$. We briefly sketch
  the idea: An RNN allows for information to flow backwards, i.e., we
  can take the output of $\Phi_2^{\max}$ in time step $t$ as one of
  its inputs at time step $t+1$. With the initialization
  $\tilde x_{0}:=x_1$, this leads to the iteration
  \begin{equation*}
    \tilde{x}_t = \Phi_2^{\max}(\tilde{x}_{t-1}, x_{t}),
  \end{equation*}
  where the network receives in step $t$ the input $x_t$. Then the
  network's output $\tilde x_n$ in step $n$ equals
  $\max\{x_1,\dots,x_n\}$.
  The remark applies verbatim to $\min$ networks.
\end{remark}

The following proposition provides the exact ReLU NN emulation of
products of elements from a bounded interval $[-\kappa,\kappa]$ for
$\kappa>0$ by elements from the discrete set
$\{0,1\}$. 
The network depth and size are independent of $\kappa$.

\begin{proposition}
  \label{prop:multbystep-alt}
  For all $ d\in\N$ and $\kappa>0$ there exists a ReLU NN
  $\Phi^{\times}_{d,\kappa}$
  \begin{align*}
    \realiz{\Phi^{\times}_{d,\kappa}}(x_1,\ldots,x_d,y) = &\, x y = (x_1y,\ldots,x_dy)^\top,
    &&
       \text{ for all } x\in [-\kappa,\kappa]^{d} \text{ and } y\in\{0,1\},
    \\
    \depth(\Phi^{\times}_{d,\kappa}) \leq &\, 2, \qquad
                                            \size(\Phi^{\times}_{d,\kappa}) \leq \, 12 d.
  \end{align*}
\end{proposition}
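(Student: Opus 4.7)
The plan is to build a single-hidden-layer ReLU network that realizes, componentwise, the exact identity
\begin{equation*}
  x_i\, y \;=\; \relu\bigl(x_i + \kappa(y-1)\bigr) \;-\; \relu\bigl(-x_i + \kappa(y-1)\bigr),
\end{equation*}
which I would first verify for scalar $x\in[-\kappa,\kappa]$ and $y\in\{0,1\}$. When $y=1$, the shift vanishes and the right-hand side collapses to $\relu(x)-\relu(-x)=x=xy$. When $y=0$, both arguments equal $\pm x-\kappa$, which lie in $[-2\kappa,0]$ since $|x|\le\kappa$; hence both ReLUs return $0$, giving $0=xy$. The discreteness of $y$ is essential here: the argument uses that $\kappa(y-1)$ takes only the values $0$ or $-\kappa$, so the threshold $-\kappa$ is just right to suppress the ReLUs simultaneously for every admissible $x$.

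Second, I would apply this identity in parallel across the $d$ components, reusing the single input coordinate $y$. Concretely, the hidden layer holds $2d$ ReLU neurons computing $\relu(\pm x_i+\kappa(y-1))$ for $i=1,\dots,d$, and the (affine) output layer pairs them with weights $+1$ and $-1$ to deliver $(x_1y,\dots,x_dy)^\top$. This yields a network of depth $L=2$, i.e. one hidden layer followed by the linear output map, as required by the proposition.

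Third, I would bound the size by counting nonzeros. The hidden weight matrix $A_1\in\R^{2d\times(d+1)}$ has two nonzeros per row (one for $x_i$ and one for $y$), contributing $4d$; the hidden bias vector has $2d$ nonzero entries, all equal to $-\kappa$; the output matrix $A_2\in\R^{d\times 2d}$ has two nonzeros per row with values $\pm 1$, contributing $2d$; and the output bias is zero. The total is $\size(\Phi^{\times}_{d,\kappa})\le 4d+2d+2d = 8d\le 12d$, and crucially this count is independent of $\kappa$, matching the claimed bound. The only ``obstacle'' in this argument is guessing the correct identity; once the shift $\kappa(y-1)$ is in place the verification and the parameter count are both routine.
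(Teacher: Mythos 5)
Your proof is correct, and it takes a slightly different route than the paper. The paper's proof also builds a depth-$2$ ReLU network from an exact algebraic identity valid for $y\in\{0,1\}$ and $|x|\le\kappa$, but its identity is the four-term expression $xy=\kappa\cdot\tfrac12\bigl(\relu(\tfrac{x}{\kappa}+y)+\relu(-\tfrac{x}{\kappa}-y)-\relu(\tfrac{x}{\kappa}-y)-\relu(-\tfrac{x}{\kappa}+y)\bigr)$, i.e.\ $\tfrac{\kappa}{2}(|\tfrac{x}{\kappa}+y|-|\tfrac{x}{\kappa}-y|)$, which uses four ReLU neurons per coordinate, zero biases, and yields size exactly $12d$. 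Your masking identity $xy=\relu(x+\kappa(y-1))-\relu(-x+\kappa(y-1))$ achieves the same realization with only two ReLU neurons per coordinate at the cost of $2d$ nonzero bias entries, giving $8d$ nonzeros, which is strictly inside the stated bound; your verification of the identity for $y\in\{0,1\}$, $|x|\le\kappa$ and your nonzero count are both sound, and the depth and size are independent of $\kappa$ as required. The only substantive difference is the choice of identity; both arguments are otherwise routine and equally valid.
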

A proof of Proposition \ref{prop:multbystep-alt} is given in the
appendix.

The exact BiSU emulation of indicator functions is the topic of the
following lemma.  An illustration of the network defined in the lemma
is given in Figure \ref{fig:indic-func}.
\begin{lemma}[Emulation of indicator functions]
  \label{lem:indicemulation-lowdimalt}
  For $d,N\in\N$ and $n \in \{0,\ldots,N\}$, let
  $A_1,\ldots,A_N\in\R^{1\times d}$ and $b_1,\ldots,b_N\in\R^1$ be
  such that
  \begin{align*}
    \domain := \bigcap_{i=1,\ldots,n} \set{x\in \R^d}{A_i x + b_i = 0}
    \cap \bigcap_{i=n+1,\ldots,N} \set{x\in \R^d}{A_i x + b_i > 0}
    \neq \emptyset.
  \end{align*}
  Let the NN $\Phi^{\mathbbm{1}}_\domain$ with layer sizes $N_0 = d$,
  $N_1 = N+ n$ and $N_2 = 1 = N_3$ be defined as
  \begin{align*}
    \Phi^{\mathbbm{1}}_\domain := &\, \left(
                                    \left( \begin{pmatrix} A_1 \\ - A_1 \\ \vdots \\ A_{n} \\ - A_{n} \\ A_{n+1} \\ \vdots \\ A_N \end{pmatrix},
    \begin{pmatrix} b_1 \\ - b_1 \\ \vdots \\ b_{n} \\ - b_{n} \\ b_{n+1} \\ \vdots \\ b_N \end{pmatrix},
    \begin{pmatrix} \heavi \\ \heavi \\ \vdots \\ \heavi \\ \heavi \\ \heavi \\ \vdots \\ \heavi \end{pmatrix} \right) ,
    ( A, b, \varrho ),
    \left( 1 ,  0 , \Id_\R \right)
    \right),
    \\
    A := &\, \begin{pmatrix} -1 & \cdots & -1 & 1 & \cdots & 1 \end{pmatrix} \in\bbR^{1\times (N+{n})},
                                                             \quad
                                                             b := -(N-{n}-\tfrac14) \in\bbR^1,
                                                             \quad
                                                             \varrho := \heavi ,
  \end{align*}
  where the first $2{n}$ elements of $A$ equal $-1$ and the last
  $N - {n}$ equal $1$.

  Then, for all $x\in\R^d$
  \begin{align*}
    \realiz{\Phi^{\mathbbm{1}}_{\domain}}(x)
    = \,
    \begin{cases} 1 & \text{ if } x\in \domain,
      \\
      0 & \text{ otherwise},
    \end{cases}
         \qquad
         \depth(\Phi^{\mathbbm{1}}_\domain)
         = \, 3 , \qquad
         \size(\Phi^{\mathbbm{1}}_\domain)
                       \leq \, (d+2)(N+n) + 2
                           .
  \end{align*}
\end{lemma}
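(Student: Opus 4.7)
The plan is to trace the computation performed by $\Phi^{\mathbbm{1}}_{\domain}$ layer by layer, compute the pre-activation at the single output neuron, and verify that it lies in $(0,\infty)$ when $x\in\domain$ and in $(-\infty,0]$ otherwise. The depth and size count will then follow by inspection.

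The first hidden layer produces the $(N+n)$-vector whose entries are $\heavi(A_i x+b_i)$ and $\heavi(-A_i x-b_i)$ for $i=1,\dots,n$, followed by $\heavi(A_i x+b_i)$ for $i=n+1,\dots,N$. Applying the row vector $A$ and bias $b$ of the second layer, the pre-activation of the second hidden neuron equals
\begin{equation*}
  S(x) := -\sum_{i=1}^{n}\bigl(\heavi(A_i x+b_i) + \heavi(-A_i x -b_i)\bigr) + \sum_{i=n+1}^{N} \heavi(A_i x+b_i) - \bigl(N-n-\tfrac14\bigr).
\end{equation*}
Since the output layer applies $\heavi$ composed with the identity, the realization equals $\heavi(S(x))$, and the task reduces to computing the sign of $S(x)$.

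For $x\in\domain$, every equality constraint $A_i x + b_i = 0$ for $i\leq n$ contributes $\heavi(0)+\heavi(0)=0$ to the first sum, and every strict inequality $A_ix+b_i>0$ for $i>n$ contributes $\heavi(\text{positive})=1$ to the second sum. Hence $S(x) = 0 + (N-n) - (N-n-\tfrac14) = \tfrac14 > 0$, so the output is $1$. For $x\notin\domain$ at least one constraint is violated: either some $i\leq n$ satisfies $A_i x + b_i \neq 0$, in which case exactly one of $\heavi(A_i x + b_i)$, $\heavi(-A_i x-b_i)$ equals $1$ and the first sum strictly drops by at least $1$; or some $i>n$ satisfies $A_i x+b_i\leq 0$, so $\heavi(A_ix+b_i)=0$ and the second sum drops by at least $1$. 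In either case $S(x) \leq -1 + \tfrac14 = -\tfrac34 \leq 0$, so the final $\heavi$ evaluates to $0$. This is the only point requiring care, and it is the reason both $A_i$ and $-A_i$ are duplicated in the first hidden layer for $i\leq n$: it forces any deviation from equality to be detected regardless of sign.

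Finally, the NN has $L=3$ layers, hence $\depth(\Phi^{\mathbbm{1}}_{\domain})=3$. The first weight matrix has at most $d$ nonzero entries per row and $N+n$ rows, and its bias vector has at most $N+n$ nonzeros, contributing at most $(d+1)(N+n)$ to the size. The second layer adds $N+n$ weight entries plus one nonzero bias, and the third layer contributes one further nonzero. Summing gives $\size(\Phi^{\mathbbm{1}}_{\domain}) \leq (d+1)(N+n) + (N+n) + 1 + 1 = (d+2)(N+n)+2$, as claimed.
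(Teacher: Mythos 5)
Your proof is correct and follows essentially the same route as the paper's: both compute the pre-activation of the final BiSU neuron and verify it equals $\tfrac14$ on $\domain$ and is at most $-\tfrac34$ otherwise (the paper phrases the equality-constraint bookkeeping via the identity $1-\heavi(y)-\heavi(-y)=\mathbbm{1}_{\{y=0\}}$, which is exactly your observation that a violated equality constraint is detected by exactly one of the paired neurons). The depth and size counts also match the paper's accounting.
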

A proof of Lemma \ref{lem:indicemulation-lowdimalt} is provided in the
appendix.

\begin{figure}
  \centering \includegraphics[width=0.7\textwidth]{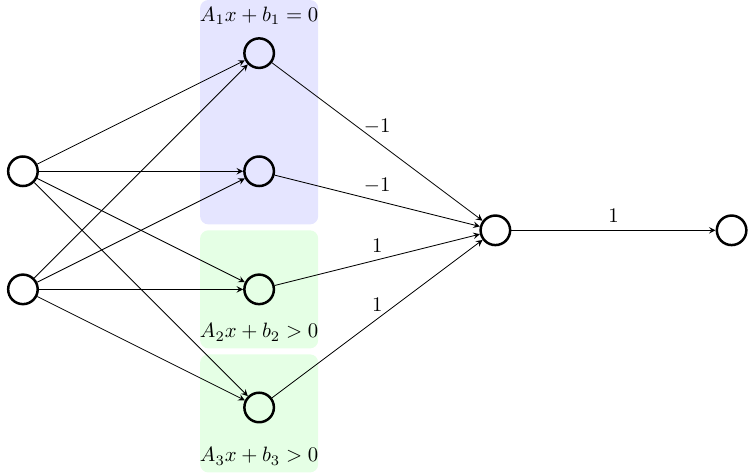}
  \caption{Illustration of Lemma \ref{lem:indicemulation-lowdimalt}
    for $ d=2$, $n = 1$, $N = 3$.  In blue, the neurons corresponding
    to one hyperplane ($ i=1 $), in green the half-spaces ($ i=2,3 $).
  }
  \label{fig:indic-func}
\end{figure}

\section{NN emulation of lowest order conforming Finite Element shape
  functions}
\label{sec:nnfem}
Consider a bounded polytopal domain $ \domain \subset \R^d $,
$ d \in \N\backslash\{1\} $, and a regular simplicial partition
$\calT$ of $\domain$.

In Sections \ref{sec:pwc}--\ref{sec:cpwlnonconvex}, we will present
neural network emulations of the lowest order conforming FEM spaces
for $H^1(\domain), H^0(\curl,\domain), H^0(\divv,\domain)$ and
$L^2(\domain)$.\footnote{Throughout this section, we will regularly
  refer to \cite{ErnGuermondBookI2021}, where only Lipschitz domains
  are considered.  We stress that the finite element spaces
  $\Sz(\calT,\domain)$, $\Ne(\calT,\domain)$, $\RT(\calT,\domain)$ and
  $\So(\calT,\domain)$ can be defined on regular, simplicial
  triangulations $\calT$ of all bounded, polytopal domains $\domain$
  and that our NN emulation results from this section apply to all
  such $\domain$.}  These finite-dimensional spaces appear naturally
in structure-preserving discretizations of the de Rham complex.
They are a key ingredient for \emph{variationally consistent DNN
  emulations} of differential operators appearing in the de Rham
complex.  For each type of shape function, we explicitly define a
network which emulates that shape function exactly.  Global
approximations can be obtained by taking a linear combination of these
shape functions using Proposition \ref{prop:sum} (scalar multiples of
shape functions are obtained by scaling all weights and biases of the
output layer).  We will detail this in Proposition
\ref{prop:basisnet}.

For shape functions which are discontinuous after extending them to
$\domain$ by the value zero outside their domain of definition, we use
Lemma \ref{lem:indicemulation-lowdimalt} based on BiSU activation to
emulate indicator functions of (parts of) their domain of definition.
We then use Proposition \ref{prop:multbystep-alt} based on ReLU
activation to multiply a continuous, piecewise linear function, which
is equal to the shape function on part of $\domain$, by the indicator
function of that part of the domain.

The following lemma provides NN emulations of possibly discontinuous,
piecewise linear functions, and will be used repeatedly in Sections
\ref{sec:pwc}--\ref{sec:cpwlnonconvex}.  A sketch of the NN structure
is given in Figure \ref{fig:pwl-nn}.

\begin{lemma}[Emulation of piecewise linear
  functions] 
  \label{lem:dpwl}
  For $d,s,\mu\in\N$ let $\domain\subset\R^d$ be a bounded polytope
  and $\calT$ be a regular, simplicial partition of $\domain$ with
  $s = \snorm\calT$ elements, $\calT = \{ T_i \}_{i=1,\ldots,s}$.
  Let $u:\domain\to\R^{\mu}$ be a function which for all
  $i=1,\ldots,s$ satisfies $u|_{T_i} \in[\bbP_1]^{\mu}$ and
  $u|_{T_i}(x) = A^{(i)} x + b^{(i)}$, $x\in T_i$.

  Then, for any
  \begin{equation}
    \label{eq:dpwlkappa}
    \kappa \ge \max_{i=1,\ldots,s}\sup_{x \in T_i} \norm[\infty]{A^{(i)} x + b^{(i)}},
  \end{equation}
  \begin{equation}
    \label{eq:dpwldef}
    \Phi_u^{PwL} := \sum_{i = 1}^{s} \Phi^{\times}_{{\mu},\kappa} \sconc \Parallelc{ \Phi_{{\mu},2}^{\Id} \sconc
      \left( \left(A^{(i)},b^{(i)},\Id_{\R^{\mu}}\right )\right ) , \Phi_{T_i}^{\mathbbm{1}}}
  \end{equation}
  satisfies $u(x) = \realizc{\Phi_u^{PwL}}(x)$ for all
  $x \in \cup_{i=1}^s T_i$ and $\realizc{\Phi_u^{PwL}}(x) = 0$ for all
  $x\in\R^d\setminus\cup_{i=1}^s T_i$.  Furthermore, if
  $\norm[0]{A^{(i)}} + \norm[0]{b^{(i)}} \leq m$ for all
  $i=1,\ldots,s$, then there exists $C>0$ independent of $d$ and
  $\calT$ such that
  \begin{align*}
    \depth(\Phi_u^{PwL}) = 5\;,\qquad
    \size(\Phi_u^{PwL}) \leq &\, C s ( \mu + m + d^2)
                                     .
  \end{align*}
\end{lemma}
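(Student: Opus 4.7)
The plan is to verify that each of the $s$ summands in \eqref{eq:dpwldef} emulates the \emph{localized} piecewise-affine contribution $x \mapsto (A^{(i)}x+b^{(i)})\,\mathbbm{1}_{T_i}(x)$ exactly on all of $\R^d$, and then to combine them via Proposition \ref{prop:sum}, exploiting that the open simplices $\{T_i\}_{i=1}^s$ are pairwise disjoint.

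Fix $i \in \{1,\ldots,s\}$ and denote by $\Phi^{(i)}$ the $i$-th summand, i.e.
\[
\Phi^{(i)} := \Phi^{\times}_{\mu,\kappa} \sconc \Parallelc{\Phi^{\Id}_{\mu,2} \sconc \left(A^{(i)},b^{(i)},\Id_{\R^\mu}\right),\, \Phi^{\mathbbm{1}}_{T_i}}.
\]
I would track the depth and size of $\Phi^{(i)}$ via the calculus of Section \ref{sec:nnoperations}. The first branch of the parallelization has depth $2+1=3$ and size $O(\mu+m)$ by Propositions \ref{prop:idnn} and \ref{prop:concat}. The second branch, $\Phi^{\mathbbm{1}}_{T_i}$, is obtained from Lemma \ref{lem:indicemulation-lowdimalt} by writing the $d$-simplex $T_i$ as the intersection of its $d+1$ open half-spaces (so $N=d+1$, $n=0$), which gives depth $3$ and size $(d+2)(d+1)+2 = O(d^2)$. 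Since both branches have the same depth, Proposition \ref{prop:parallel} applies and produces a depth-$3$ network realizing $x \mapsto (A^{(i)}x+b^{(i)},\mathbbm{1}_{T_i}(x))$. Sparse concatenation with $\Phi^{\times}_{\mu,\kappa}$ (depth $2$, size $\leq 12\mu$, Proposition \ref{prop:multbystep-alt}) yields $\Phi^{(i)}$ of depth exactly $5$ and size bounded by $C(\mu+m+d^2)$ for a constant $C$ independent of $i$, $d$, and $\calT$.

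The pointwise identity $\realiz{\Phi^{(i)}}(x) = (A^{(i)}x+b^{(i)})\,\mathbbm{1}_{T_i}(x)$ must then be verified for every $x \in \R^d$. For $x \in T_i$ the hypothesis \eqref{eq:dpwlkappa} places $A^{(i)}x+b^{(i)}$ in $[-\kappa,\kappa]^\mu$, so Proposition \ref{prop:multbystep-alt} yields the desired product $(A^{(i)}x+b^{(i)}) \cdot 1$. The only nontrivial step is to handle $x \notin T_i$, where the indicator vanishes but $A^{(i)}x+b^{(i)}$ can exceed $\kappa$ in magnitude: the output $\bszero$ then cannot be inferred from the statement of Proposition \ref{prop:multbystep-alt} alone. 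I would therefore read off the explicit ReLU construction in the appendix (essentially the one-hidden-layer multiplier $(z,y)\mapsto \relu(z+My)-\relu(-z+My)-\relu(z)+\relu(-z)$ with $M\geq \kappa$, applied componentwise) that it in fact produces $0$ for \emph{any} $z \in \R^\mu$ whenever $y=0$. Granted this, Proposition \ref{prop:sum} assembles the $s$ networks of common depth $5$ into $\Phi_u^{PwL}$ of depth $5$ and size $\leq Cs(\mu+m+d^2)$. For each $x$ in the (unique) $T_j$ containing it only the $j$-th summand contributes, giving $u(x)$; for $x \in \R^d \setminus \cup_i T_i$ every summand vanishes, completing the proof.
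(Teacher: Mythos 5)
Your proposal is correct and follows essentially the same route as the paper's proof: each summand is the localized affine piece times the indicator of $T_i$, the depth/size bookkeeping goes through Propositions \ref{prop:parallel}, \ref{prop:concat}, \ref{prop:sum}, Proposition \ref{prop:idnn} and Lemma \ref{lem:indicemulation-lowdimalt} with $N=d+1$, $n=0$, and the disjointness of the open simplices gives the pointwise identity. You are also right about the one subtlety the paper's proof leaves implicit, namely that for $x\notin T_i$ the statement of Proposition \ref{prop:multbystep-alt} does not cover arguments outside $[-\kappa,\kappa]^\mu$; the explicit construction $f(x,y)=\tfrac12(\relu(x+y)+\relu(-x-y)-\relu(x-y)-\relu(-x+y))$ in the appendix indeed returns $0$ for every $x\in\R$ when $y=0$, which closes that gap exactly as you describe.
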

\begin{proof}
  Firstly, we observe that indeed $ u(x) = \realizc{\Phi_u^{PwL}}(x) $
  for all $x \in \cup_{i=1}^s T_i$
  and $\realizc{\Phi_u^{PwL}}(x) = 0$ for all
  $x \in \R^d \setminus \cup_{i=1}^s T_i$.

  Secondly, we estimate
  \begin{align*}
    \depth(\Phi_u^{PwL}) = &\, \depth( \Phi^{\times}_{\mu,\kappa} ) + \depth( \Phi_{T_i}^{\mathbbm{1}} )
                             =5
                             ,
    \\
    \size(\Phi_u^{PwL}) \leq &\, s \left( 2 \size\left( \Phi^{\times}_{{\mu},\kappa} \right)
                               + 2 \size\left( \Parallelc{ \Phi_{{\mu},2}^{\Id} \sconc
                               \left( \left(A^{(i)},b^{(i)},\Id_{\R^{\mu}}\right )\right ) , \Phi_{T_i}^{\mathbbm{1}}} \right) \right)
    \\
    \leq &\, s \left( 2 \size\left( \Phi^{\times}_{{\mu},\kappa} \right)
           + 4 \size( \Phi_{{\mu},2}^{\Id} )
           + 4 \size\left( \left( \left( A^{(i)},b^{(i)},\Id_{\R^{\mu}} \right )\right ) \right)
           + 2 \size\left( \Phi_{T_i}^{\mathbbm{1}} \right) \right)
    \\
    \leq &\, s ( C {\mu} + C {\mu} + C m + C d^2 )
           \leq C s ({\mu} + m + d^2)
                                            ,
  \end{align*}
  where we applied in the last line Lemma
  \ref{lem:indicemulation-lowdimalt} with $ N = d+1, n = 0 $.
\end{proof}
\begin{figure}
  \centering \includegraphics[width=0.8\textwidth]{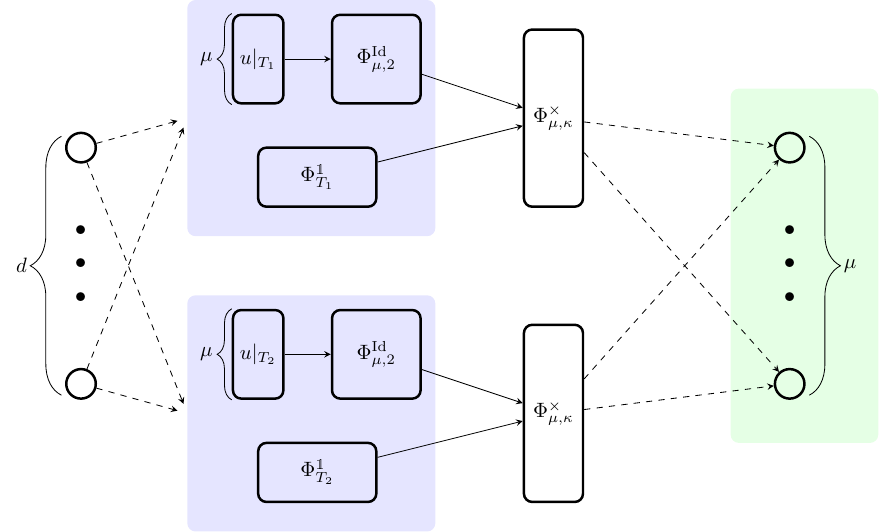}
  \caption{Illustration of Lemma \ref{lem:dpwl} for $ s=2 $.  Solid
    lines indicate sparse concatenation, dashed lines visualize layers
    integrated in the blocks.  Each of the blue groups represents a
    parallelization from Proposition \ref{prop:parallel}.  The
    elementwise assembly is done in the output layer (green).}
  \label{fig:pwl-nn}
\end{figure}
\subsection{Piecewise constants $\Sz$}
\label{sec:pwc}
The lowest order approximation space for $ L^2(\domain) $ is the
finite dimensional subspace $\Sz(\calT,\domain)$ from \eqref{def:S0}.
A basis is given by $ \{ \theta_T^{\Sz}\}_{T\in \calT} $, whose
elements are indicator functions $ \theta_T^{\Sz} := \mathbbm{1}_T $.
They can be expressed by applying Lemma
\ref{lem:indicemulation-lowdimalt} with $N=d+1$ and $n=0$: for all
$T = \conv(\{a_0,\ldots,a_d\}) \in \calT$, we define
$ (A_i,b_i) \in \R^{1\times (d+1)} $, $ i=1,\ldots,d+1 $ by the
relations
\begin{equation}\label{S0affine}
  (A_i,b_i)\begin{pmatrix}
    (a_{0})_1 &  & (a_d)_1 \\
    & \ddots& \\
    (a_0)_d &  & (a_d)_d \\
    1 & \cdots & 1
  \end{pmatrix} = \bm{e}_i^\top , \qquad \text{ for all }
  i=1,\ldots,d+1,
\end{equation}
where $ (\bm{e}_i)_j = \delta_{ij} $, so that
$ T = \bigcap_{i=1,\ldots,d+1} \set{x\in \R^d}{A_i x + b_i>0} $.
Then there exists $C>0$ independent of $d$ and $\calT$ such that for
all $T\in\calT$ the NN $\Phi^{\Sz}_T := \Phi_T^{\mathbbm{1}}$
satisfies
\begin{align}\label{dnnS0}
  \theta_T^{\Sz} = \, \realiz{\Phi^{\Sz}_T} ,
  \quad
  \depth(\Phi^{\Sz}_T)
  = \, 3
  ,
  \quad
  \size(\Phi^{\Sz}_T) \leq (d+2)(d+1) + 2 \leq C d^2.
\end{align}
\subsection{Raviart-Thomas elements $\RT$}
\label{sec:rt}

We introduce a basis of $\RT(\calT,\domain)$ from \eqref{def:RT0}, and
provide a NN emulation of those basis functions.  For
$ f \subset \partial \domain $, we define $s(f) = 1$ and
$ \theta^{\RT}_f(x) := \frac{|f|}{d|T_1 |}(x- a_1) \mathbbm{1}_{T_1}
$, where $ f\subset \overline{{T_1}} $, $T_1 \in \calT $ and $a_1$ is
the only vertex of $ T_1 $ that does not belong to
$ \overline{f}$.\footnote{ We use a different normalization of the
  shape functions than in \cite[Section 14.1]{ErnGuermondBookI2021}.
  This is inconsequential for the ensuing analysis.}  For interior
faces $ f \subset \domain $ we define $s(f) = 2$ and construct
$ \theta^{\RT}_f $ by assembling local shape functions of the
neighboring simplices $ T_1, T_2 $ with
$ \overline{f} = \overline{T_1}\cap \overline{T_2}$, \cite[Equation
(14.3)]{ErnGuermondBookI2021}
\begin{equation} \label{eq:RTshape} \theta_f^{\RT}(x):=
  \begin{cases}
    \frac{|f|}{d|T_1|}(x-a_1) & \text{ if } x \in T_1, \\
    -\frac{|f|}{d|T_2|}(x-a_2) & \text{ if } x \in T_2, \\
    0 & \text{ if } x \notin \overline{T_1\cup T_2},
  \end{cases}
\end{equation}
where $ a_1,a_2 $ are the the only vertices of $ T_1,T_2 $,
respectively, not belonging to $ \overline{f} $.
The functions $ \{\theta^{\RT}_f\}_{f\in \calF} $ form a basis of
$ \RT(\calT,\domain)$ (see, e.g., \cite[Proposition
14.1]{ErnGuermondBookI2021}).
\begin{proposition} \label{prop:RT} Given $ f \in \calF $, let
  $ \{T_i\}_{i=1}^{s(f)}$ be the simplices adjacent to $ f $ and let
  $ a_i := (\calV \cap \overline{T_i}) \setminus \overline{f} \in
  \R^{d}, i=1,\ldots,s(f)$.  Then
  \begin{equation}
    \label{eq:RTdef2}
    \Phi_f^{\RT} := \sum_{i = 1}^{s(f)} (-1)^{i-1} \Phi^{\times}_{d,\kappa} \sconc \Parallelc{ \Phi_{d,2}^{\Id} \sconc \left (\left (\tfrac{|f|}{d|T_i|}\Id_{d\times d},-\tfrac{|f|}{d|T_i|}a_i,\Id_{\R^d}\right )\right ) , \Phi_{T_i}^{\mathbbm{1}}}
  \end{equation}
  satisfies $\theta_f^{\RT}(x) = \realiz{\Phi^{\RT}_f}(x)$ for a.e.\
  $ x \in \domain$, for any
  \begin{equation}
    \label{eq:RTkappa2}
    \kappa \ge \max_{i=1,\ldots,s(f)}\sup_{x \in T_i} \tfrac{|f|}{d|T_i|}  \norm[\infty]{x-a_i}.
  \end{equation}
  In addition, there exists an absolute constant $C>0$ independent of
  $d$ and $\calT$ such that for all $f\in\calF$
  \begin{align*}
    \depth(\Phi_f^{\RT}) = 5
    , \qquad
    \size(\Phi_f^{\RT}) \leq C d^2 s(f) \leq 2 C d^2
    .
  \end{align*}
\end{proposition}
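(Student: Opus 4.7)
The plan is to recognize that the construction in \eqref{eq:RTdef2} is essentially an instance of Lemma \ref{lem:dpwl} with vector-valued output ($\mu = d$), up to the alternating signs $(-1)^{i-1}$ across the (at most two) simplices adjacent to $f$. From \eqref{eq:RTshape} one reads off that, a.e.\ on $\domain$,
\begin{equation*}
\theta_f^{\RT}(x) \;=\; \sum_{i=1}^{s(f)} (-1)^{i-1}\,\tfrac{|f|}{d|T_i|}(x - a_i)\,\mathbbm{1}_{T_i}(x),
\end{equation*}
since the interface $\overline{T_1}\cap\overline{T_2} = \overline{f}$ has $d$-dimensional measure zero.

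For correctness of the realization I would verify each summand separately. The one-layer block $\bigl(\tfrac{|f|}{d|T_i|}\Id_{d\times d},\,-\tfrac{|f|}{d|T_i|}a_i,\,\Id_{\R^d}\bigr)$ exactly realizes the affine map $x \mapsto \tfrac{|f|}{d|T_i|}(x-a_i)$. Sparse-concatenating it with $\Phi^{\Id}_{d,2}$ (Proposition \ref{prop:idnn}) produces a depth-$3$ network with the same realization, whose depth now matches that of $\Phi^{\mathbbm{1}}_{T_i}$ (depth $3$ by Lemma \ref{lem:indicemulation-lowdimalt} with $N=d+1$, $n=0$). Parallelizing the two via Proposition \ref{prop:parallel} outputs the pair $\bigl(\tfrac{|f|}{d|T_i|}(x-a_i),\,\mathbbm{1}_{T_i}(x)\bigr)$. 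The hypothesis \eqref{eq:RTkappa2} guarantees that the first component lies in $[-\kappa,\kappa]^d$ whenever $x \in T_i$, so sparse-concatenating with $\Phi^{\times}_{d,\kappa}$ (Proposition \ref{prop:multbystep-alt}) exactly multiplies the vector-valued affine part by $\mathbbm{1}_{T_i}(x)$. Summing the $s(f)$ resulting subnetworks with signs $(-1)^{i-1}$ absorbed into the output weights (Proposition \ref{prop:sum}) then reproduces $\theta_f^{\RT}$ a.e.

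Depth and size follow by bookkeeping. Each summand has depth $\depth(\Phi^{\Id}_{d,2}) + \depth(\Phi^{\times}_{d,\kappa}) = 3 + 2 = 5$, and since $\Phi^{\mathbbm{1}}_{T_i}$ has depth $3$ the parallelization in \eqref{eq:RTdef2} requires no further padding; the sum of networks of equal depth preserves the depth, so $\depth(\Phi_f^{\RT}) = 5$. For the size, the affine block on $T_i$ contributes at most $2d$ nonzero parameters (diagonal scaling plus $d$-dimensional bias), the networks $\Phi^{\Id}_{d,2}$ and $\Phi^{\times}_{d,\kappa}$ contribute $O(d)$ each by Propositions \ref{prop:idnn} and \ref{prop:multbystep-alt}, and $\Phi^{\mathbbm{1}}_{T_i}$ contributes at most $(d+2)(d+1)+2 = O(d^2)$ by Lemma \ref{lem:indicemulation-lowdimalt}. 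Using the size estimates of Propositions \ref{prop:concat}, \ref{prop:parallel} and \ref{prop:sum} as in the proof of Lemma \ref{lem:dpwl}, each summand has size $O(d^2)$, and summing $s(f) \le 2$ of them yields $\size(\Phi_f^{\RT}) \le C d^2 s(f) \le 2Cd^2$.

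No step here is genuinely difficult, since the heavy lifting has already been done in Lemma \ref{lem:dpwl} and in the calculus of Section \ref{sec:nnoperations}; the only points requiring some care are matching depths across the parallelization branches (handled by the inserted $\Phi^{\Id}_{d,2}$) and correctly handling the alternating signs (absorbed harmlessly into the output layer by Proposition \ref{prop:sum}).
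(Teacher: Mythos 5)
Your proposal is correct and follows essentially the same route as the paper: the paper's proof simply observes that the realization agrees with $\theta_f^{\RT}$ off the skeleton $\partial\calT$ and then invokes Lemma \ref{lem:dpwl} with $\mu=d$, $m=2d$ and $s=s(f)$, which is exactly the structure you spell out in more detail (your count of at most $2d$ nonzero parameters in the affine block corresponds to the paper's choice $m=2d$). The only cosmetic slip is writing $\depth(\Phi^{\Id}_{d,2})=3$ in the final depth tally, where you clearly mean the depth of the concatenated affine block; the arithmetic and conclusion are unaffected.
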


\begin{remark}
  \label{rem:RTkappa}
  We note that the right-hand side of \eqref{eq:RTkappa2} is bounded
  from above by a constant which only depends on $d$ and the shape
  regularity constant $C_{\rm sh}$.
\end{remark}

\begin{proofof}{Proposition \ref{prop:RT}}
  Firstly, we observe that indeed
  $ \theta_f^{\RT}(x) = \realizc{\Phi_f^{\RT}}(x) $
  for all $ x \in \domain \setminus \partial \calT $, where
  $ \partial \calT := \bigcup_{T\in \calT} \partial T$.

  Secondly, we apply Lemma \ref{lem:dpwl} with $\mu=d$, $m=2d$ and
  $s = s(f)$.
\end{proofof}

Alternatively, we can build the same shape functions by enforcing
strongly, via ReLU activation, continuity of the component normal to
$ f $, as imposed in \eqref{def:RT0}.
We select the unit normal vector $ n_f $ to $ f $ pointing towards
$ T_2 $ and an orthonormal system $ \{t_1,\ldots,t_{d-1}\} $ spanning
the hyperplane tangent to $ f $. Then, we decompose
\begin{equation}\label{orthRT}
  \theta_f^{\RT}(x) = (\theta_f^{\RT}(x)\cdot n_f) n_f + \sum_{j = 1}^{d-1}(\theta_f^{\RT}(x)\cdot t_j) t_j.
\end{equation}
Thus, it suffices to compute separately $\theta_f^{\RT}(x)\cdot n_f $
and $ \theta_f^{\RT}(x)\cdot t_j $ and to take the linear combination
\eqref{orthRT} in the last layer.  The proof of the next proposition
will be given in the appendix.
\begin{proposition} \label{prop:RTv2} Given $ f \in \calF $, let
  $ \{T_i\}_{i=1}^{{s}(f)} $ be the simplices adjacent to $ f $.  Then
  there exist
  $ A_{n_f}^{(i)} \in \R^{1\times d},b_{n_f}^{(i)} \in \R$,
  $ i = 1,\ldots,{s}(f)$ such that
  \begin{equation}\label{eq:RTdefn}
    \Phi^{\RT,{\perp}}_f := \Phi^\times_{1,1} \sconc
    \Parallelc{\Phi_{{s}(f)}^{\min}\sconc \left ( \left (
          \begin{pmatrix} A_{n_f}^{(1)} \\ \vdots \\ A_{n_f}^{({s}(f))} \end{pmatrix},
          \begin{pmatrix} b_{n_f}^{(1)} \\ \vdots \\ b_{n_f}^{({s}(f))} \end{pmatrix},
          \Id_{\R^{s(f)}} \right ) \right ),
      \sum_{i=1}^{{s}(f)} \Phi_{T_i}^{\mathbbm{1}} + \Phi_{f}^{\mathbbm{1}} }
  \end{equation}
  satisfies
  $ \theta_f^{\RT}(x)\cdot n_f = \realizc{ \Phi^{\RT,{\perp}}_f
  }(x) $ for a.e.\ $ x \in \domain $ and every $ x\in f $.  
  There also
  exist $ A_{t_j}^{(i)} \in \R^{1\times d},b_{t_j}^{(i)} \in \R$,
  $ i=1,\ldots,s(f) $, $ j = 1,\ldots,d-1 $ 
  such that
  \begin{equation}\label{eq:RTdeft}
    \Phi^{\RT,t_j}_f  := \sum_{i=1}^{s(f)}\Phi^\times_{1,\kappa} \sconc \Parallelc{\Phi^{\Id}_{1,2} \sconc \left ((A_{t_j}^{(i)},b_{t_j}^{(i)},\Id_{\R})\right ) , \Phi_{T_i}^{\mathbbm{1}}}
  \end{equation}
  satisfies
  $ \theta_f^{\RT}(x)\cdot t_j = \realizc{ \Phi^{\RT,t_j}_f }(x) $ for
  a.e.\ $ x \in \domain $, where
  \begin{equation}\label{eq:RTkappat}
    \kappa \ge \max_{\substack{i=1,\ldots,s(f) \\ j = 1,\ldots,d-1}}\sup_{x \in T_i} \left |A_{t_j}^{(i)}x+b_{t_j}^{(i)}\right |.
  \end{equation}
  In addition, there exists a constant $C>0$ that is independent of
  $d$ and $\calT$ such that for all $ f \in \calF $
  \begin{align*}
    \depth(\Phi^{\RT, \perp}_f) = &\, 5
                                        ,
    & \depth(\Phi^{\RT,t_j}_f) = &\, 5
                                   ,
    \\
    \size(\Phi^{\RT,\perp}_f) \leq &\, C d^2 s(f) \leq 2 C d^2
                                          ,
    & \size(\Phi^{\RT,t_j}_f)
      \leq
                                 &\, C d^2 s(f) \leq 2 C d^2
                                   .
  \end{align*}
\end{proposition}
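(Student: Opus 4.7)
The proof exploits the orthogonal decomposition
$\theta^{\RT}_f = (\theta^{\RT}_f\cdot n_f)\,n_f + \sum_{j=1}^{d-1}(\theta^{\RT}_f\cdot t_j)\,t_j$
together with the defining property of $\RT(\calT,\domain)$: the normal trace $\theta^{\RT}_f\cdot n_f$ is \emph{continuous} across $f$ (and equals $1$ on $f$), while each tangential component $\theta^{\RT}_f\cdot t_j$ is, in general, discontinuous across $f$. The plan is to treat these two cases with different network constructions and then to read off the depth and size bounds from the calculus of Section~\ref{sec:nnoperations}.

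For each tangential direction $t_j$, set $A_{t_j}^{(i)} := (-1)^{i-1}\tfrac{|f|}{d|T_i|}t_j^\top$ and $b_{t_j}^{(i)} := -(-1)^{i-1}\tfrac{|f|}{d|T_i|}t_j^\top a_i$, so that $A_{t_j}^{(i)}x + b_{t_j}^{(i)}$ coincides with $\theta^{\RT}_f(x)\cdot t_j$ on $T_i$. The function $\theta^{\RT}_f\cdot t_j$, extended by zero outside $\bigcup_{i=1}^{s(f)} T_i$, is then exactly in the scope of Lemma~\ref{lem:dpwl} with $\mu = 1$, $s = s(f)$ and $m = d+1$. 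Applying that lemma with $\kappa$ as in \eqref{eq:RTkappat} (which is precisely the hypothesis \eqref{eq:dpwlkappa}) yields $\Phi^{\RT,t_j}_f$ in the form \eqref{eq:RTdeft} together with the stated depth-$5$ and size-$Cd^2 s(f)$ bounds.

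For the normal direction, set analogously $A_{n_f}^{(i)} := (-1)^{i-1}\tfrac{|f|}{d|T_i|}n_f^\top$ and $b_{n_f}^{(i)} := -(-1)^{i-1}\tfrac{|f|}{d|T_i|}n_f^\top a_i$. Using $|T_i| = \tfrac{1}{d}|f|\operatorname{dist}(a_i,\operatorname{aff}(f))$ and the orientation convention that $n_f$ points from $T_1$ into $T_2$, a direct computation shows that each map $A_{n_f}^{(i)}x + b_{n_f}^{(i)}$ takes value $+1$ on the hyperplane containing $f$, vanishes at the opposite vertex $a_i$, and, by affinity, strictly exceeds $1$ on the opposite simplex $T_{3-i}$. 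Consequently
\begin{equation*}
\theta^{\RT}_f(x)\cdot n_f = \min\bigl\{ A_{n_f}^{(i)}x + b_{n_f}^{(i)} : i = 1,\dots,s(f) \bigr\}
\quad\text{for every } x\in \overline{\,T_1\cup\dots\cup T_{s(f)}\cup f\,},
\end{equation*}
with common value in $[0,1]$; for boundary faces $s(f)=1$ the $\min$ reduces to a single affine map. In parallel, the indicator network $\sum_{i=1}^{s(f)}\Phi^{\mathbbm{1}}_{T_i}+\Phi^{\mathbbm{1}}_f$ from Lemma~\ref{lem:indicemulation-lowdimalt} evaluates to $1$ at every $x \in T_1\cup\dots\cup T_{s(f)}\cup f$ and to $0$ at every point of $\domain$ off the (measure-zero) remainder of the mesh skeleton; the explicit summand $\Phi^{\mathbbm{1}}_f$ is needed so that the identity holds pointwise on $f$. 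Parallelizing the two streams (padding with $\Phi^{\Id}$ from Proposition~\ref{prop:idnn} if depths differ) and sparse-concatenating with $\Phi^{\times}_{1,1}$ from Proposition~\ref{prop:multbystep-alt} then assembles $\Phi^{\RT,\perp}_f$ in the form \eqref{eq:RTdefn}; wherever the indicator is $1$, the first argument of $\Phi^{\times}_{1,1}$ lies in $[0,1]\subset[-1,1]$, so the multiplication is executed exactly.

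The complexity bounds then follow by Propositions~\ref{prop:parallel}--\ref{prop:idnn}: each indicator contributes depth $3$ and size $O(d^2)$, so the indicator stream has size $O(d^2 s(f))$; the affine-to-$\min$ stream has depth $3$ (a depth-$1$ affine map sparse-concatenated with $\Phi^{\min}_{s(f)}$ of depth at most $2$ for $s(f)\le 2$, using Lemma~\ref{lem:minmaxrelu}) and size $O(d)$; parallelization preserves depth $3$, and the final sparse concatenation with $\Phi^{\times}_{1,1}$ (depth $2$, constant size) yields total depth $5$ and size $\le C d^2 s(f)$. The principal obstacle is the geometric identity in the normal case: one must verify carefully that the two affine extensions cross with the correct sign on the supporting hyperplane of $f$, so that the continuous, piecewise-affine function $\theta^{\RT}_f\cdot n_f$ on $T_1\cup T_2\cup f$ is captured by a single $\min$. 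Once this is in place, the remainder is bookkeeping with the NN calculus of Section~\ref{sec:nnoperations}.
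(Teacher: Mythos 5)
Your proposal is correct and follows essentially the same route as the paper's proof: explicit affine maps interpolating the value $1$ on $\operatorname{aff}(f)$ and $0$ at the opposite vertex for the continuous normal trace (captured by a single $\min$ because each extension exceeds $1$ on the neighbouring simplex), element-by-element assembly via Lemma~\ref{lem:dpwl} for the discontinuous tangential components, and the same depth/size bookkeeping. You merely write the coefficients in closed form where the paper defines them through the interpolation system \eqref{RTaffine}, and you spell out the $\min$ identity that the paper leaves implicit.
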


\begin{remark}
  \label{rem:RTv2kappa}
  The right-hand side of Equation \eqref{eq:RTkappat} is bounded from
  above by a constant which only depends on $d$ and the shape
  regularity constant $C_{\rm sh}$ of the mesh $ \calT $.
\end{remark}

\begin{corollary}
  \label{cor:RTv2}
  For all $f\in\calF$, the NN
  \begin{equation}
    \label{eq:RTv2def}
    \Phi^{\RT,*}_f
    := \left(\left(  n_f , 0 ,
	\Id_{\R^d} \right)\right) \sconc \Phi^{\RT,{\perp}}_f
    + \sum_{j=1}^{d-1} \left(\left( t_j , 0 ,
	\Id_{\R^d} \right)\right) \sconc \Phi^{\RT,t_j}_f
  \end{equation}
  satisfies $ \theta_f^{\RT}(x) = \realizc{ \Phi^{\RT,*}_f }(x) $ for
  a.e.\ $x\in\domain$ and
  $ ( \theta_f^{\RT}(x)\cdot n_f ) n_f = \realizc{ \Phi^{\RT,*}_f }(x)
  $ for all $ x\in f $.  In addition, there exists $C>0$ independent
  of $d$ and $\calT$ such that for all $f\in\calF$
  \begin{align*}
    \depth( \Phi^{\RT,*}_f ) = \, 6
    ,
    \qquad
    \size( \Phi^{\RT,*}_f ) \leq \, C d^3 .
  \end{align*}
\end{corollary}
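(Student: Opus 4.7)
The plan is to verify three things: the identity $\realizc{\Phi^{\RT,*}_f} = \theta_f^{\RT}$ almost everywhere on $\domain$, the pointwise identity $\realizc{\Phi^{\RT,*}_f}(x) = (\theta_f^{\RT}(x)\cdot n_f) n_f$ on $f$, and the size/depth bounds.

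First I would invoke Proposition \ref{prop:RTv2}: for a.e.\ $x\in\domain$ the scalar outputs of $\Phi^{\RT,\perp}_f$ and of each $\Phi^{\RT,t_j}_f$ coincide with $\theta_f^{\RT}(x)\cdot n_f$ and $\theta_f^{\RT}(x)\cdot t_j$, respectively. The outer affine blocks $((n_f,0,\Id_{\R^d}))$ and $((t_j,0,\Id_{\R^d}))$ applied via sparse concatenation (Proposition \ref{prop:concat}) scale these scalar outputs by the corresponding unit vectors in $\R^d$. Summing the resulting $d$ vector-valued networks through Proposition \ref{prop:sum} and using the orthogonal decomposition \eqref{orthRT} then delivers $\realizc{\Phi^{\RT,*}_f}(x) = \theta_f^{\RT}(x)$ for a.e.\ $x\in\domain$.

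The slightly delicate step, and the only ingredient beyond bookkeeping, is the behavior on the face $f$ itself. I would exploit that each $\Phi^{\RT,t_j}_f$ in \eqref{eq:RTdeft} depends only on the indicator networks $\Phi_{T_i}^{\mathbbm{1}}$ of the two simplices adjacent to $f$. By construction in Lemma \ref{lem:indicemulation-lowdimalt} with the strict inequalities used in \eqref{S0affine}, these networks realize the indicator of the \emph{open} simplex $T_i$ and hence vanish on $\partial T_i\supset f$. Consequently each tangential summand contributes $0$ at every $x\in f$. Combining this with the fact that $\Phi^{\RT,\perp}_f$ agrees with $\theta_f^{\RT}\cdot n_f$ for every $x\in f$ (an assertion explicit in Proposition \ref{prop:RTv2}) yields the claimed $\realizc{\Phi^{\RT,*}_f}(x) = (\theta_f^{\RT}(x)\cdot n_f) n_f$ on $f$.

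The architectural estimates follow routinely. Each of the $d$ summands in \eqref{eq:RTv2def} is the sparse concatenation of a depth-$1$ affine block with the depth-$5$ network $\Phi^{\RT,\perp}_f$ or $\Phi^{\RT,t_j}_f$, so Proposition \ref{prop:concat} gives depth $6$, and Proposition \ref{prop:sum} preserves depth under summation. For the size, each affine block $((n_f,0,\Id_{\R^d}))$ or $((t_j,0,\Id_{\R^d}))$ has at most $d$ nonzero entries; the bound $\size(\Phi^1\sconc\Phi^2)\leq 2\size(\Phi^1)+2\size(\Phi^2)$ from Proposition \ref{prop:concat}, together with the $O(d^2)$ size estimates of Proposition \ref{prop:RTv2}, yields $O(d^2)$ per summand, and the additivity of size in Proposition \ref{prop:sum} over the $d$ summands produces the target $O(d^3)$ bound. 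The main conceptual point, such as it is, will be the careful handling of the measure-zero face $f$, which is resolved precisely by the open-simplex convention built into $\Phi_{T_i}^{\mathbbm{1}}$.
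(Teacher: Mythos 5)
Your proposal is correct and follows essentially the same route as the paper: the realization identities are read off from Proposition \ref{prop:RTv2} together with the decomposition \eqref{orthRT}, and the depth and size bounds come from Propositions \ref{prop:concat} and \ref{prop:sum} exactly as you compute them. The paper's own proof only writes out the size/depth estimate and leaves the realization claims (including the vanishing of the tangential summands on $f$, which you justify via the open-simplex indicator networks) implicit, so your version simply supplies more detail along the same lines.
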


\begin{proof}
  We estimate the network size and depth as follows:
  \begin{align*}
    \depth( \Phi^{\RT,*}_f ) = &\, 6
                                 ,
    \\
    \size( \Phi^{\RT,*}_f ) \leq
                               &\, 2\size\left( \left(\left( n_f , 0 , \Id_{\R^d} \right)\right) \right) + 2 \size( \Phi^{\RT,\perp}_f )
    \\
                               &\, + \sum_{j=1}^{d-1} \left( 2 \size\left( \left(\left( t_j , 0 , \Id_{\R^d} \right)\right) \right)
                                 + 2 \size( \Phi^{\RT,t_j}_f ) \right)
                                 \leq \, C d^3
                                     .
  \end{align*}

\end{proof}
\subsection{\Ned elements $\Ne$}
\label{sec:ne}
In this section we restrict ourselves to the space dimension
$d \in \{2,3\}$.  For $d = 2$,
we can relate the \Ned basis functions to the Raviart-Thomas basis.
In fact, one can verify that, for an edge $ f \in \calE = \calF $
(which is also a face), the finite element basis
$ \{\theta_f^{\Ne}\}_{f\in \calF} $ for $ \Ne $ satisfies
\begin{equation}
  \label{eq:NeRTtwodim}
  \theta_f^{\Ne} \cdot t_f = \theta_f^{\RT}\cdot n_f, \quad \text{ and } \quad  \theta_f^{\Ne} \cdot n_f = - \theta_f^{\RT}\cdot t_f,
\end{equation}
where $ n_f:=((n_f)_1, (n_f)_2) $ is a unit normal vector to $ f $ as
in \eqref{orthRT} and $ t_f = (-(n_f)_2,(n_f)_1) $ is a unit vector
tangent to $ f $.  Hence, a NN emulation for $ \theta_f^{\Ne} $ can be
derived from Proposition \ref{prop:RT} or Corollary \ref{cor:RTv2}.

We now focus on the case $ d = 3 $.
A basis $ \{\theta^{\Ne}_e\}_{e\in \calE} $ for $ \Ne(\calT,\domain)$
can be constructed by assembling local shape functions of all
simplices $ T_{1}\ldots,T_{s(e)} $, $ s(e)\in \N $ sharing an edge
$ e $.  We fix $ e \in \calE $ and a unit vector $ t_e $ tangent to
$ e $, and denote the midpoint of $ e $ by $ m_e $.  We denote by
$ \tilde{e}(i) $ the only edge of $ T_i $ that does not share a vertex
with $ e $, and let $ t_{\tilde{e}(i)} $ be a unit vector tangent to
$\tilde{e}(i)$, directed in such a way that
$ t_e \cdot [(m_e - m_{\tilde{e}(i)})\times t_{\tilde{e}(i)}]>0 $.

Then,
\begin{equation}
  \label{eq:NEshape}
  \theta_e^{\Ne}(x):=
  \begin{cases}
    \dfrac{(x - m_{\tilde{e}(i)})\times t_{\tilde{e}(i)} }{t_e \cdot
      [(m_e - m_{\tilde{e}(i)})\times t_{\tilde{e}(i)}] }
    & \text{ if } x \in T_i, i = 1,\ldots,s(e), \vspace*{0.5em}\\
    \ 0 & \text{ if } x \notin \overline{\bigcup_{i=1\ldots,s(e)}
      T_i}.
  \end{cases}
\end{equation}
Note that
\begin{equation}
  \label{eq:mathfraks}
  \mathfrak{s}(\calE) := \max_{e \in \calE} s(e)
\end{equation}
is bounded from above by a constant only dependent on the shape
regularity constant $ C_{\rm sh} $ of $ \calT $.  See \cite[Remark
11.5 and Proposition 11.6]{ErnGuermondBookI2021}.

\begin{proposition}\label{prop:NE}
  Given $ e \in \calE $, let
  $ A_{e}^{(i)} \in \R^{3\times 3}, b_{e}^{(i)} \in \R^{3} $ be such
  that for $ i = 1,\ldots,s(e) $
  \begin{align*}
    A_{e}^{(i)} x := \frac{x\times t_{\tilde{e}(i)} }{t_e \cdot [(m_e - m_{\tilde{e}(i)})\times t_{\tilde{e}(i)}] }
    \quad\forall x\in \R^3, \quad
    b_{e}^{(i)} := - \frac{m_{\tilde{e}(i)}\times t_{\tilde{e}(i)} }{t_e \cdot [(m_e - m_{\tilde{e}(i)})\times t_{\tilde{e}(i)}] }
    .
  \end{align*}
  Then
  \begin{equation}
    \label{eq:NEdef}
    \Phi_e^{\Ne} := \sum_{i = 1}^{s(e)} \Phi^{\times}_{3,\kappa} \sconc \Parallelc{ \Phi_{3,2}^{\Id} \sconc
      \left( \left(A_{e}^{(i)},b_{e}^{(i)},\Id_{\R^3}\right )\right ) , \Phi_{T_i}^{\mathbbm{1}}}
  \end{equation}
  satisfies $ \theta_e^{\Ne}(x) = \realizc{\Phi_e^{\Ne}}(x) $ for
  a.e.\ $ x \in \domain $, for any $ \kappa $ such that
  \begin{equation}
    \label{eq:NEkappa}
    \kappa \ge \max_{i=1,\ldots,s(e)}\sup_{x \in T_i} \frac{\norm[\infty]{(x - m_{\tilde{e}(i)})\times t_{\tilde{e}(i)}}}{t_e \cdot [(m_e - m_{\tilde{e}(i)})\times t_{\tilde{e}(i)}] }.
  \end{equation}
  Furthermore, there exists $C>0$ independent of $\calT$ such that for
  all $ e \in \calE $
  \begin{align*}
    \depth(\Phi_e^{\Ne}) = &\, 5 , \quad \size(\Phi_e^{\Ne}) \leq \, C s(e) \leq C \mathfrak{s}(\calE)
                             .
  \end{align*}
\end{proposition}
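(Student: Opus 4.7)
The plan is to mimic the structure of the proof of Proposition \ref{prop:RT}: establish that $\theta_e^{\Ne}$, extended by zero outside $\bigcup_{i=1}^{s(e)} T_i$, is a piecewise affine vector field whose pieces coincide with the claimed affine maps $x \mapsto A_e^{(i)} x + b_e^{(i)}$, and then invoke Lemma \ref{lem:dpwl} directly with $\mu = 3$ and $d = 3$ to obtain both the realization identity and the size/depth bounds.

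First, I would verify that on each simplex $T_i$ the formula \eqref{eq:NEshape} is affine in $x$. The cross product $x \mapsto x \times t_{\tilde{e}(i)}$ is represented by the skew-symmetric $3 \times 3$ matrix associated with $t_{\tilde{e}(i)}$, so $x \mapsto (x - m_{\tilde{e}(i)}) \times t_{\tilde{e}(i)}$ is affine in $x$, and dividing by the scalar $t_e \cdot [(m_e - m_{\tilde{e}(i)})\times t_{\tilde{e}(i)}]$, which is nonzero and positive by the sign convention imposed on $t_{\tilde{e}(i)}$, preserves affineness. By direct inspection this agrees with $A_e^{(i)} x + b_e^{(i)}$ as defined in the statement. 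The extension by zero outside $\bigcup_{i=1}^{s(e)} T_i$ in \eqref{eq:NEshape} is precisely the zero extension built into Lemma \ref{lem:dpwl}; the discrepancy occurs only on the skeleton $\partial \calT$, which has Lebesgue measure zero and accounts for the ``a.e.'' qualifier.

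Second, the hypothesis \eqref{eq:dpwlkappa} of Lemma \ref{lem:dpwl} with $\mu = 3$ becomes
\begin{equation*}
\kappa \ge \max_{i=1,\ldots,s(e)}\sup_{x \in T_i} \norm[\infty]{A_e^{(i)} x + b_e^{(i)}},
\end{equation*}
which is exactly \eqref{eq:NEkappa} after substituting the explicit form of $A_e^{(i)}$ and $b_e^{(i)}$. Moreover, since $A_e^{(i)} \in \R^{3 \times 3}$ and $b_e^{(i)} \in \R^3$, the sparsity bound $\norm[0]{A_e^{(i)}} + \norm[0]{b_e^{(i)}} \le 12$ holds, so the parameter $m$ in Lemma \ref{lem:dpwl} may be taken as an absolute constant.

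Third, applying Lemma \ref{lem:dpwl} with $d = 3$, $\mu = 3$, $s = s(e)$, and $m = 12$ yields $\depth(\Phi_e^{\Ne}) = 5$ and $\size(\Phi_e^{\Ne}) \le C s(e)(3 + 12 + 9) \le C' s(e)$, after which the dimension-free bound $s(e) \le \mathfrak{s}(\calE)$ is just the definition \eqref{eq:mathfraks}. There is no genuine obstacle here: the proposition is essentially the three-dimensional \Ned{} analogue of Proposition \ref{prop:RT}, and the only nontrivial piece is confirming that the cross-product form \eqref{eq:NEshape} is affine with the precise coefficients stated. Once this is checked, the definition \eqref{eq:NEdef} is seen to be the instantiation of the generic template \eqref{eq:dpwldef} from Lemma \ref{lem:dpwl}, so its conclusions transfer verbatim.
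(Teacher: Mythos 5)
Your proposal is correct and follows essentially the same route as the paper, whose proof consists precisely of noting that $\theta_e^{\Ne}$ agrees with the claimed piecewise affine maps on $\domain\setminus\partial\calT$ and then invoking Lemma \ref{lem:dpwl} with $\mu=d=3$, $m=d^2+d=12$ and $s=s(e)$. You merely spell out the details (affineness of the cross product, positivity of the denominator, matching of the $\kappa$-condition) that the paper leaves implicit.
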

\begin{proof}
  Firstly, we observe that indeed
  $ \theta_e^{\Ne}(x) = \realizc{\Phi_e^{\Ne}}(x) $ for all
  $ x \in \domain \setminus \partial \calT $.  Secondly, we use Lemma
  \ref{lem:dpwl} with $\mu = d = 3$, $m=d^2+d = 12$ and $s=s(e)$.
\end{proof}
\subsection{CPwL elements $\So$}
\label{sec:cpwlnonconvex}
In this section, we provide a construction based on element-by-element
assembly of the shape functions, similar to that in the previous
sections, using both ReLU and BiSU activations.
A basis $ \{\theta_{p}^{\So}\}_{p\in \calV} $ of $\So(\calT, \domain)$
is uniquely defined by the relations
$ \theta_{p_i}^{\So}(p_j) = \delta_{ij} $, for $ p_i,p_j \in \calV $.
Define $ s(p) := |\{T \in \calT : p \in \overline{T}\}| \in \N $.
Note that
\begin{equation}
  \label{eq:mathfrakn}
  \mathfrak{s}(\calV) := \max_{p \in \calV} s(p)
\end{equation}
is bounded from above by a constant only dependent on $d$ and the
shape regularity constant $ C_{\rm sh} $ of $ \calT $.  See
\cite[Remark 11.5 and Proposition 11.6]{ErnGuermondBookI2021}, which
generalize to space dimension $d>3$.
The following proposition is analogous to Propositions \ref{prop:RT}
and \ref{prop:NE}.

\begin{proposition}
  \label{prop:S1}
  Given $ p \in\calV $, let $ T_1, \ldots T_{s(p)} \in \calT $,
  $s(p)\in \N $ denote the simplices adjacent to $ p $.  Let
  $ A_p^{(i)} \in \R^{1\times d}, b_p^{(i)} \in \R^1 $ for
  $ i = 1,\ldots,s(p) $ be such that
  \begin{align*}
    (A_p^{(i)}, b_p^{(i)})
    \begin{pmatrix}
      p_1 & (a_{i,1})_1  &  & (a_{i,d})_1 \\
      \vdots & & \ddots& \\
      p_d & (a_{i,1})_d  &  & (a_{i,d})_d \\
      1 & 1 & \cdots & 1
    \end{pmatrix} = (1,0,\ldots,0),
  \end{align*}
  where the points $ a_{i,j} \in \R^d $ are such that
  $T_i = \conv({p,a_{i,1},\ldots,a_{i,d}})$.  Then
  \begin{equation}
    \label{eq:S1def}
    \Phi_p^{\So}
    :=
    \sum_{i = 1}^{s(p)}\Phi_{1,1}^{\times} \sconc \Parallelc{\Phi_{1,2}^{\Id}
      \sconc \left (A_p^{(i)}, b_p^{(i)}, \Id_{\R}  \right ),  \Phi_{T_i}^{\mathbbm{1}}}
  \end{equation}
  satisfies
  $ \theta_p^{\So}(x) = \realiz{\Phi_p^{\So}}(x) \, \text{ for a.e.\ }
  x\in\domain $.  Furthermore, there exists $C>0$ independent of
  $\calT$ such that for all $p\in\calV$
  \begin{align*}
    \depth(\Phi_p^{\So}) = &\, 5 , \qquad
                             \size(\Phi_p^{\So}) \leq \, C s(p) d^2 \leq C \mathfrak{s}(\calV) d^2
                             .
  \end{align*}
\end{proposition}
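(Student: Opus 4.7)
The construction in \eqref{eq:S1def} is a direct instance of the general piecewise-linear emulation template from Lemma \ref{lem:dpwl}, applied to the scalar piecewise affine function $u = \theta_p^{\So}$. The plan is to (i) verify that Lemma \ref{lem:dpwl} is indeed applicable with the stated parameters, and (ii) read off the depth and size bounds.

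For step (i), recall that $\theta_p^{\So}$ is the hat function at $p$: it is the unique CPwL function on $\calT$ with $\theta_p^{\So}(p)=1$ and $\theta_p^{\So}(q)=0$ for every other vertex $q\in\calV$. Its support is contained in $\bigcup_{i=1}^{s(p)}\overline{T_i}$. On each simplex $T_i$ adjacent to $p$ (with other vertices $a_{i,1},\ldots,a_{i,d}$), the restriction $\theta_p^{\So}|_{T_i}$ is the unique affine function that equals $1$ at $p$ and $0$ at $a_{i,1},\ldots,a_{i,d}$; this is precisely the barycentric coordinate of $p$ in $T_i$, so by the defining linear system in the statement it equals $x\mapsto A_p^{(i)}x + b_p^{(i)}$. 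On $T_i$ this coordinate takes values in $[0,1]$, so
\begin{equation*}
  \sup_{x\in T_i}\norm[\infty]{A_p^{(i)}x + b_p^{(i)}}\le 1,
\end{equation*}
and hence $\kappa=1$ meets the requirement \eqref{eq:dpwlkappa}. Thus the hypotheses of Lemma \ref{lem:dpwl} hold with $s=s(p)$, $\mu=1$, and $\kappa=1$, and by construction $\Phi_p^{\So}$ coincides with $\Phi_{\theta_p^{\So}}^{PwL}$ from \eqref{eq:dpwldef}. The a.e.\ identity $\theta_p^{\So}(x)=\realizc{\Phi_p^{\So}}(x)$ follows immediately: points in the open interior of any $T_i$ activate exactly one indicator $\Phi_{T_j}^{\mathbbm{1}}$, and points outside $\bigcup_i\overline{T_i}$ activate none, matching the definition of $\theta_p^{\So}$ up to the skeleton $\partial\calT$, which has Lebesgue measure zero.

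For step (ii), the weight matrix $A_p^{(i)}\in\R^{1\times d}$ together with $b_p^{(i)}\in\R$ contains at most $d+1$ nonzero entries, so we may take $m=d+1$ in Lemma \ref{lem:dpwl}. The lemma then yields $\depth(\Phi_p^{\So})=5$ and
\begin{equation*}
  \size(\Phi_p^{\So}) \le C\, s(p)\bigl(\mu + m + d^2\bigr) = C\, s(p)\bigl(1 + (d+1) + d^2\bigr) \le C\, s(p)\, d^2,
\end{equation*}
with $C>0$ absolute. The final estimate $s(p)\le \mathfrak{s}(\calV)$ is immediate from \eqref{eq:mathfrakn}, which completes the proof.

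The step that requires the most care is the verification that $\kappa=1$ is admissible, i.e.\ the observation that the barycentric coordinate is bounded by $1$ on its simplex; every other ingredient is purely bookkeeping in the NN calculus of Section \ref{sec:nnoperations} and a direct invocation of Lemma \ref{lem:dpwl}.
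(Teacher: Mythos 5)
Your proof is correct and follows essentially the same route as the paper: the paper's own proof simply observes the pointwise identity off the skeleton $\partial\calT$ and invokes Lemma \ref{lem:dpwl} with $\mu=1$, $m=d+1$ and $s=s(p)$. Your additional verification that $\kappa=1$ is admissible (because the barycentric coordinate of $p$ lies in $[0,1]$ on each adjacent simplex) is a detail the paper leaves implicit, and it is correct.
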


\begin{proof}
  Observing that
  $ \theta_p^{\So}(x) = \realiz{\Phi_p^{\So}}(x) \, \text{ for all } x
  \in \domain \setminus \partial \calT$, we use Lemma \ref{lem:dpwl}
  with $\mu=1$, $m=d+1$ and $s=s(p)$ to estimate the NN size.
\end{proof}
%
\section{ReLU NN emulation of CPwL shape functions}
\label{sec:cpwlrelu}
%
For continuous shape functions which vanish on the boundary of their
support, one can construct NN emulations using the ReLU activation
function alone, as shown in \cite[Section 3]{HLXZ2020} for regular,
simplicial meshes \emph{with convex patches}.  
The purpose of this
section is to extend these results to arbitrary regular, simplicial
partitions $\calT$ of polytopal domains $\domain \subset \bbR^d$, in
any space dimension $d\geq 2$, using only ReLU activations,
significantly improving the network size bounds from \cite[Theorem
5.2]{HLXZ2020}.  In the sequel, for a vertex $ p \in \calV $ we write
\begin{equation}\label{def:patch}
  \omega(p):= \bigcup_{i=1\ldots,s(p)} \overline{T}_i,
\end{equation}
where $ T_1, \ldots T_{s(p)} \in \calT $ denote the simplices adjacent
to $p$.  We call $ \omega(p) $ a \emph{patch}.
One key assumption in \cite[Section 3]{HLXZ2020} was that $\omega(p)$
is convex for all vertices $ p \in \calV $.

Removing this assumption is the main topic of the Section
\ref{sec:cpwlnonconvexrelu}.  
We remark that the construction given in
Section \ref{sec:cpwlnonconvex} also does not require convexity of the
patches and, since no minimum is computed, the depth of the network is
independent of the input dimension $ d $ and the maximum number of
elements meeting in one point $\mathfrak{n}(\calV)$. 
In this section we avoid the use of BiSU activations, 
which could be considered not natural
for the emulation of continuous functions in $ \So(\calT,\domain) $.
\subsection{Regular, simplicial partitions $\calT$ with convex patches}
\label{sec:cpwl}
%
Under the assumption of convexity of patches, the hat basis functions
$ \{\theta_p^{\So}\}_{p \in \calV} \subset \So(\calT, \domain) $
satisfy \cite[Lemma 3.1]{HLXZ2020}
\begin{equation}
  \label{eq:thetap}
  \theta_p^{\So}(x) = \max\left \{0, \min_{i = 1,\ldots,s(p)} A_p^{(i)} x + b_p^{(i)}  \right \},
\end{equation}
with $ A_p^{(i)} \in \R^{1\times d}, b_p^{(i)} \in \R^1 $ such that
$ A_p^{(i)} x+b_p^{(i)} = \theta_p^{\So} |_{\mathstrut T_{i}}(x) $ for
all $ T_i \subset \omega(p) $, $ i =1, \ldots, s(p) $.

We now recall the emulation of the shape functions from
\cite{HLXZ2020}, and show the dependence of the constants on $d$.  We
remark that the explicit $d$-dependence was not studied in \cite{HLXZ2020}.
\begin{proposition}[{{\cite[Theorem 3.1]{HLXZ2020}}}]
  \label{prop:HLXZ2020basis}
  Consider $p\in\calV$ for which $\omega(p)$ is convex and let
  $ T_1, \ldots T_{s(p)} \in \calT $, $s(p)\in \N $ be the simplices
  adjacent to $ p $.  Let
  $(A_p^{(i)}, b_p^{(i)}) \in \R^{(d+1)\times 1}$, $i=1,\ldots,s(p)$
  be as in \eqref{eq:thetap}.

  Then
  \begin{align*}
    \Phi_p^{CPwL}
    := &\, \Big( \left(  1 ,   0 , \rho \right),
         \left( 1 ,   0 , \Id_{\R} \right) \Big)
                \sconc \Phi^{\min}_{s(p)} \sconc \left( \left(
                \begin{pmatrix} A_{p}^{(1)} \\ \vdots \\ A_{p}^{({s}(p))} \end{pmatrix},
    \begin{pmatrix} b_{p}^{(1)} \\ \vdots \\ b_{p}^{({s}(p))} \end{pmatrix},
    \Id_{\R^{s(p)}} \right) \right)
  \end{align*}
  satisfies $\realiz{\Phi_p^{CPwL} }(x) = \theta^{\So}_p(x)$ for all
  $x\in\domain$ and there exists $C>0$ independent of $d$ and $\calT$
  such that for all $p \in \calV$
  \begin{align*}
    \depth(\Phi_p^{CPwL} ) \leq \, 5 + \log_2({s}(p)),
    \qquad
    \size(\Phi_p^{CPwL} ) \leq \, C{s}(p) d
                                       .
  \end{align*}
  The depth only depends on $\calT$ through $s(p)$.
\end{proposition}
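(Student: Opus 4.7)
The plan is to simply verify the realization equation by tracing through the network, and then read off the depth and size bounds from the already-established NN calculus in Section \ref{sec:nnoperations} together with Lemma \ref{lem:minmaxrelu}. The key structural fact powering the whole argument is the representation \eqref{eq:thetap}, which is valid precisely because of the convexity of $\omega(p)$; we take this identity as the starting point (so any obstruction in the proof has already been dealt with in establishing \eqref{eq:thetap}).

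Tracing the realization from right to left in the sparse concatenation \eqref{eq:S1def} (with $\Phi_p^{CPwL}$): the innermost factor is a single affine map of depth $1$ from $\R^d$ to $\R^{s(p)}$ that produces the vector $(A_p^{(i)} x + b_p^{(i)})_{i=1}^{s(p)}$. Composing with $\Phi^{\min}_{s(p)}$ from Lemma \ref{lem:minmaxrelu} returns the scalar $m(x) := \min_{i=1,\ldots,s(p)}(A_p^{(i)} x + b_p^{(i)})$. Finally, the two-layer outer block $((1,0,\rho),(1,0,\Id_\R))$ applies $\rho$ to $m(x)$ and then passes the result through an identity output layer, producing $\rho(m(x)) = \max\{0,m(x)\}$. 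By \eqref{eq:thetap}, this equals $\theta^{\So}_p(x)$ for every $x \in \domain$, establishing the asserted realization.

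The depth bound follows by additivity of depth under sparse concatenation (Proposition \ref{prop:concat}): the affine piece has depth $1$, the min-network has depth at most $2 + \log_2(s(p))$ by Lemma \ref{lem:minmaxrelu}, and the final ReLU-identity block has depth $2$, totalling at most $5 + \log_2(s(p))$. The only dependence on $\calT$ that enters here is through $s(p)$, as claimed. For the size bound, the leading affine layer contributes at most $(d+1)s(p)$ nonzero weights and biases (each of the $s(p)$ rows has at most $d$ nonzero entries of $A_p^{(i)}$ plus one bias), the network $\Phi^{\min}_{s(p)}$ has size $\leq C s(p)$ by Lemma \ref{lem:minmaxrelu}, and the final block has $O(1)$ parameters. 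Applying the size estimate in Proposition \ref{prop:concat}, which at worst doubles the contributions plus adds first/last layer sizes, we obtain $\size(\Phi_p^{CPwL}) \leq C\,d\,s(p)$ for an absolute constant $C>0$ independent of $d$ and $\calT$.

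There is no real obstacle in this argument: the work lies entirely in the identity \eqref{eq:thetap}, whose validity rests on the convexity of the patch $\omega(p)$ (so that every piece $A_p^{(i)} x + b_p^{(i)}$ extended linearly on all of $\omega(p)$ dominates $\theta_p^{\So}$, and such that their collective minimum is non-positive outside $\omega(p)$). Once \eqref{eq:thetap} is accepted, the proposition reduces to the mechanical verification above, and the quantitative $d$-dependence in the size estimate is obtained simply by carefully counting entries of $A_p^{(i)}\in\R^{1\times d}$, an ingredient that was not tracked in \cite[Theorem 3.1]{HLXZ2020} but requires no additional idea.
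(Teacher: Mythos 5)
Your proof is correct and follows essentially the same route as the paper: both take the representation \eqref{eq:thetap} as the given input (valid by convexity of $\omega(p)$), verify the realization by composing the affine layer, $\Phi^{\min}_{s(p)}$ and the final ReLU block, and then obtain the depth and size bounds additively from Proposition \ref{prop:concat} and Lemma \ref{lem:minmaxrelu}, with the $(d+1)s(p)$ count for the first affine layer giving the $d$-dependence. The only blemish is the stray reference to \eqref{eq:S1def}, which labels the BiSU-based construction of Proposition \ref{prop:S1} rather than the network $\Phi_p^{CPwL}$ defined here.
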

\begin{proof}
  The network depth and size can be bounded as
  \begin{align*}
    \depth(\Phi_p^{CPwL} ) = &\, \depth\left( \Big( \left(  1 ,   0 , \rho \right),
                               \left( 1 ,   0 , \Id_{\R} \right) \Big) \right)
                               + \depth( \Phi^{\min}_{{s}(p)} )
    \\
                             &\, + \depth\left( \left( \left(
                               \begin{pmatrix} A_{p}^{(1)} \\ \vdots \\ A_{p}^{({s}(p))} \end{pmatrix},
    \begin{pmatrix} b_{p}^{(1)} \\ \vdots \\ b_{p}^{({s}(p))} \end{pmatrix},
    \Id_{\R^{{s}(p)}} \right) \right) \right)
    \\
             = &\, 2 + \depth( \Phi^{\min}_{{s}(p)} ) + 1
                 \leq 3 + (2+\log_2({s}(p))  )
                 = 5 +\log_2({s}(p))
                 ,
    \\
    \size(\Phi_p^{CPwL} ) \leq &\, C \size\left( \Big( \left(  1 ,   0 , \rho \right),
                                 \left(  1,   0 , \Id_{\R} \right) \Big) \right)
                                 + C\size( \Phi^{\min}_{s(p)} )
    \\
                             &\, + C\size\left( \left( \left(
                               \begin{pmatrix} A_{p}^{(1)} \\ \vdots \\ A_{p}^{(s(p))} \end{pmatrix},
    \begin{pmatrix} b_{p}^{(1)} \\ \vdots \\ b_{p}^{(s(p))} \end{pmatrix},
    \Id_{\R^{s(p)}} \right) \right) \right)
    \\
    \leq &\, C ( 2 + s(p) + {s}(p) (d+1)) \leq C {s}(p) d
                                       .
  \end{align*}

\end{proof}
The preceding result can be used to construct emulations of shape
functions on non-convex patches which only use the ReLU activation.
%
\subsection{Regular, simplicial partitions $\calT$ including non-convex patches}
\label{sec:cpwlnonconvexrelu}
We now extend 
Section~\ref{sec:cpwl} to 
non-convex patches, i.e.\ we show that ReLU NNs can emulate CPwL
functions on arbitrary regular, simplicial meshes in $d\in\N$
dimensions.  To present this result in Theorem \ref{thm:relucpwl}
below, we introduce some notation.

Given $p \in\calV$ , let $ T_1, \ldots T_{s(p)} \in \calT$ denote the
simplices adjacent to $ p $.  For all $j=1,\ldots,s(p)$, let
$a_0 := p$ and $a_1,\ldots,a_d\in\R^d$ be such that
$T_j = \simp{a_0}{a_d}$ and let
$q_j:=p + \delta_j \sum_{i=1}^d (p-a_i)$ for some sufficiently small
$\delta_j>0$.  Then we define
\begin{equation}\label{def:ttilde}
  \tilde{T}_{ij} :=
  \begin{cases}
    \conv(\{q_j,a_0, \dots ,a_d\}\setminus\{a_i\})&\text{if }i\in\{1,\dots,d\},\\
    T_j&\text{if }i=0.
  \end{cases}
\end{equation}
Furthermore, set
\begin{equation}\label{def:omegatilde}
  \tilde{\omega}_j(p) :=  \bigcup_{i=0}^d \overline{\tilde{T}_{ij}}.
\end{equation}
We build basis functions for $ \So(\calT,\domain) $ starting from the
hat functions $\tilde{\theta}_{p,j}^{\So} \in C^0(\domain)$ for
$j=1,\ldots,s(p)$ defined by
\begin{align}
  \label{eq:tilde_basis_functions_def}
  \begin{aligned}
    &\tilde{\theta}_{p,j}^{\So}(p) \, = 1 \text{ and }
    \tilde{\theta}_{p,j}^{\So}(q) = 0 \text{ for all other vertices
      $q$ of }\tilde{\omega}_j(p),
    \\
    &\tilde{\theta}_{p,j}^{\So} |_{\tilde{T}_{ij}} \, \in \bbP_1
    \text{ for all } i=0,\ldots,d,
    \\
    &\tilde{\theta}_{p,j}^{\So}
    |_{\domain\backslash\tilde{\omega}_j(p)} \, = 0.
  \end{aligned}
\end{align}

\begin{figure}
  \centering
  \begin{subfigure}[b]{0.5\textwidth}
    \centering \includegraphics[width=0.8\textwidth]{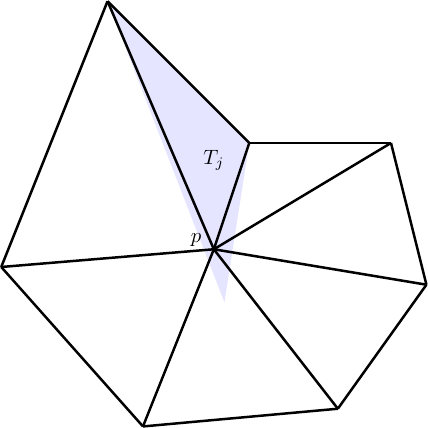}
    \caption{$\omega(p)$}
    \label{fig:omega}
  \end{subfigure}
  \hspace{0.5cm}
  \begin{subfigure}[b]{0.4\textwidth}
    \centering \includegraphics[width=0.4\textwidth]{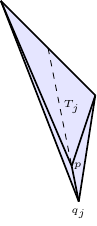}
    \caption{$\tilde\omega_j(p)$}
    \label{fig:omegatilde}
  \end{subfigure}
    \caption{The patches $\omega(p)$ and (shaded)
        $\tilde\omega_j(p)\subset\omega(p)$.}
    \end{figure}

    In Theorem \ref{thm:theorem_min_max_basis_functions} we show that
    CPwL basis functions with nonconvex support $\omega(p)$ can be
    computed as the maximum of $s(p)$ many CPwL basis functions with convex
    support, whose ReLU NN emulation was given in Section
    \ref{sec:cpwl}.  This maximum can be emulated exactly by a ReLU NN
    using the constructions in Section \ref{sec:nndef}, as shown in
    Theorem \ref{thm:relucpwl}.  We obtain the same bound on the ReLU
    NN size as the bound on the NN size in Proposition \ref{prop:S1}.
    The proofs of these results are postponed to Appendix \ref{sec:proofs}.

    \begin{theorem}
      \label{thm:theorem_min_max_basis_functions}
      For all $ p \in\calV $, let $ T_1, \ldots T_{s(p)} \in \calT $,
      $s(p)\in \N $ be the simplices adjacent to $ p $.  Then, for all
      $p\in\calV$ and all $x \in \omega (p)$
      \begin{equation}
        \label{eq:theorem_tilde_basis_fct}
        \theta_p^{\So}(x)
        =\max_{{j = 1,\ldots,s(p)}} \tilde{\theta}_{p,j}^{\So} (x)
        = \max_{j = 1,\ldots,s(p)}
        \max \Big \{ 0,  \min_{i \in  \{0,\dots,d\}} \tilde{A}_p^{(i,j)} x + \tilde{b}_p^{(i,j)}   \Big \} ,
      \end{equation}
      where each
      $x\mapsto \tilde{A}_p^{(i,j)} x + \tilde{b}_p^{(i,j)}$ is a
      globally linear function fulfilling
      $(\tilde{A}_p^{(i,j)} x + \tilde{b}_p^{(i,j)})
      |_{\tilde{T}_{ij}} = \tilde{\theta}_{p,j}^{\So}
      |_{\tilde{T}_{ij}}$.

    \end{theorem}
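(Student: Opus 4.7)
The plan is to establish the two equalities in \eqref{eq:theorem_tilde_basis_fct} in turn.

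\emph{Second equality.} For each $j$, the barycentric identity
\begin{equation*}
  p = \frac{1}{1+d\delta_j}\, q_j + \frac{\delta_j}{1+d\delta_j}\sum_{m=1}^d a_m,
\end{equation*}
with strictly positive coefficients shows that $p$ is an interior point of the $d$-simplex $S_j := \conv(\{q_j,a_1,\dots,a_d\})$, and that the simplices $\tilde T_{0j},\dots,\tilde T_{dj}$ form its star subdivision at $p$. Hence $\tilde\omega_j(p) = S_j$ is convex, so $\tilde\theta_{p,j}^{\So}$ is the hat function at the interior vertex $p$ of a convex patch, and the convex-patch representation \eqref{eq:thetap} (Proposition \ref{prop:HLXZ2020basis}) applied to the partition $\{\tilde T_{ij}\}_{i=0}^d$ yields the second equality, with $\tilde A_p^{(i,j)}x + \tilde b_p^{(i,j)}$ the globally affine extensions of $\tilde\theta_{p,j}^{\So}|_{\tilde T_{ij}}$.

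\emph{First equality, direction ``$\leq$''.} Any $x \in \omega(p)$ lies in some $T_{j_0}$. On $T_{j_0}$, both $\tilde\theta_{p,j_0}^{\So}|_{T_{j_0}}$ and $\theta_p^{\So}|_{T_{j_0}}$ are the unique affine function taking value $1$ at $p$ and $0$ at the remaining vertices of $T_{j_0}$, hence coincide there; thus $\theta_p^{\So}(x) = \tilde\theta_{p,j_0}^{\So}(x) \le \max_j \tilde\theta_{p,j}^{\So}(x)$.

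\emph{First equality, direction ``$\ge$''.} The remaining task is to show $\tilde\theta_{p,j}^{\So}(x) \le \theta_p^{\So}(x)$ for every $j$ and every $x \in \omega(p)$. The inequality is trivial on $T_j$ (equality, as above) and on $\omega(p) \setminus \tilde\omega_j(p)$ (the left side vanishes), so it remains to treat $x \in \tilde T_{ij} \cap T_l$ for some $i \in \{1,\dots,d\}$ and $l \ne j$. On the convex polytope $\tilde T_{ij} \cap T_l$ the difference $g := \theta_p^{\So}|_{T_l} - \tilde\theta_{p,j}^{\So}|_{\tilde T_{ij}}$ is affine, so one only needs $g \ge 0$ at its vertices. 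For $\delta_j$ sufficiently small, each such vertex lies either on the $(d-1)$-face of $\tilde T_{ij}$ opposite to $p$ --- on which $\tilde\theta_{p,j}^{\So}$ vanishes identically, being linear with zero values at all its vertices $q_j, a_1, \ldots, \hat{a}_i, \ldots, a_d$ --- so that $g = \theta_p^{\So} \ge 0$; or on the face $F_{ij} := \conv(\{p, a_1, \ldots, \hat{a}_i, \ldots, a_d\})$, on which $\tilde\theta_{p,j}^{\So}$ and $\theta_p^{\So}$ both restrict to the barycentric hat of $p$ in $F_{ij}$, yielding $g \equiv 0$.

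\emph{Main obstacle.} The nontrivial geometric step is to exclude intersection vertices of $\tilde T_{ij} \cap T_l$ lying on the remaining $d-1$ faces of $\tilde T_{ij}$ that contain both $p$ and $q_j$ (while omitting some $a_m$), where neither of the two cases above applies. Since these faces collapse to $(d-2)$-subsimplices of $F_{ij}$ as $\delta_j \to 0$, a standard compactness argument over the finite family of simplices $\{T_l\}_{l=1}^{s(p)}$ adjacent to $p$ supplies a positive threshold for $\delta_j$ below which no such vertex can arise, completing the proof.
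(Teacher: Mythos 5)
Your handling of the second equality and of the direction ``$\leq$'' is correct and essentially coincides with the paper's: the barycentric identity you write is the content of Lemma \ref{lemma:move_point_outwards} and Proposition \ref{prop:new_patch_from_interior_point}, and the second equality is the paper's Corollary \ref{coroll:subset_conex}. The divergence, and the gap, is in the direction ``$\geq$'', i.e.\ the inequality $\tilde{\theta}_{p,j}^{\So}\le\theta_p^{\So}$ on $\tilde{T}_{ij}\cap T_l$. Your reduction to checking the affine difference $g$ at the vertices of the convex polytope $\overline{\tilde{T}_{ij}}\cap\overline{T_l}$ is legitimate, but the step you yourself label the ``main obstacle'' --- that for $\delta_j$ small every such vertex lies either on the facet of $\tilde{T}_{ij}$ opposite $p$ or on $F_{ij}$ --- is precisely the geometric content of the theorem, and you do not prove it. The appeal to ``a standard compactness argument'' does not go through as stated: the faces of $\tilde{T}_{ij}$ you must exclude (those containing both $p$ and $q_j$) do not collapse \emph{away} from the skeleton $\partial\calT$ as $\delta_j\to0$; they collapse \emph{onto} $(d-2)$-faces of $T_j$, which lie in $\partial\calT$ and contain the vertex $p$ of every $T_l$ in the patch, so there is no positive separation to invoke. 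Ruling out vertices of the intersection polytope on these faces requires an explicit analysis of which constraints of $T_l$ can become active there (edges and faces of $T_l$ emanating from $p$ in, or near, the affine span of the collapsing face and the direction $\sum_{i=1}^d(p-a_i)$, including the degenerate case where that direction lies in a face of $\calT$), none of which is supplied. A secondary omission: you never verify $\tilde{\omega}_j(p)\subset\omega(p)$, which is what actually constrains $\delta_j$ and is needed, e.g., so that $\tilde{\theta}_{p,j}^{\So}$ vanishes on $\omega(p)\setminus\tilde\omega_j(p)$ only where it should.

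The paper avoids vertex enumeration entirely. It first shows that $p$ has a whole neighbourhood of star points of $\omega(p)$ (Lemmas \ref{lemma:same_side_hyper_plane_star} and \ref{lemma:ball_epsilon}), chooses $\delta_j$ so that $q_j$ is such a star point, whence $\tilde{\omega}_j(p)\subset\omega(p)$ (Lemma \ref{lemma:tilde_g_for_theorem}), and then compares the two affine functions along the ray from $p$ through $x$: both equal $1$ at $p$ and vanish where the ray exits $\tilde{\omega}_j(p)$, respectively $\omega(p)$, and the first exit point is no farther from $p$ than the second (Lemma \ref{lemma:lin_function_sm_th_or}). That one-dimensional comparison replaces your entire case analysis and makes the smallness condition on $\delta_j$ explicit and checkable. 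I recommend either adopting that ray argument or carrying out your omitted constraint analysis in full; as written, the proposal does not constitute a proof.
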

    \begin{theorem}
      \label{thm:relucpwl}
      For all $p\in\calV$ let $ T_1, \ldots T_{{s}(p)} \in \calT $,
      ${s}(p)\in \N $ be the simplices adjacent to $ p $.  For
      $\tilde{\theta}_{p,j}^{\So}$, $j=1,\ldots,s(p)$ defined in
      \eqref{eq:tilde_basis_functions_def},
      let $\tilde{\Phi}^{CPwL}_{p,j}$, $j=1,\ldots,s(p)$ be the NNs
      from Proposition \ref{prop:HLXZ2020basis} satisfying
      $\realiz{\tilde{\Phi}^{CPwL}_{p,j}} =
      \tilde{\theta}_{p,j}^{\So}$ on $\domain$.

      Then
      \begin{align}
        \Phi_p^{CPwL}
        := \Phi^{\max}_{s(p)} \sconc \Parallel{\tilde{\Phi}^{CPwL}_{p,1},\ldots,\tilde{\Phi}^{CPwL}_{p,s(p)}}
      \end{align}
      satisfies $\realiz{\Phi_p^{CPwL} }(x) = \theta^{\So}_p(x)$ for
      all $x\in\domain$ and
      \begin{align*}
        \depth(\Phi_p^{CPwL} ) \leq \, 7 + \log_2(s(p)) + \log_2(d+1) ,
        \quad
        \size(\Phi_p^{CPwL} ) \leq \, C d^2 s(p)
                                           .
      \end{align*}
    \end{theorem}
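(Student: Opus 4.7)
The plan is to combine Theorem \ref{thm:theorem_min_max_basis_functions} with the NN calculus of Section \ref{sec:nnoperations} and with Proposition \ref{prop:HLXZ2020basis}. The key identity is $\theta_p^{\So}(x) = \max_{j=1,\ldots,s(p)} \tilde{\theta}_{p,j}^{\So}(x)$, which by Theorem \ref{thm:theorem_min_max_basis_functions} holds on $\omega(p)$. To extend this to $\domain$, I would observe that each auxiliary patch $\tilde{\omega}_j(p)$ is a union of $d+1$ simplices contained in $\omega(p)$, so that $\tilde{\theta}_{p,j}^{\So}$ vanishes on $\domain\setminus\omega(p)$. Since each $\tilde{\theta}_{p,j}^{\So}\geq 0$ and $\theta_p^{\So}\equiv 0$ on $\domain\setminus\omega(p)$, taking the pointwise maximum yields the claimed identity on all of $\domain$.

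Next, I would verify that $\Phi_p^{CPwL}$ realizes the correct function. The parallelization $\Parallel{\tilde{\Phi}^{CPwL}_{p,1},\ldots,\tilde{\Phi}^{CPwL}_{p,s(p)}}$, by Proposition \ref{prop:parallel}, realizes $x\mapsto(\tilde{\theta}_{p,1}^{\So}(x),\ldots,\tilde{\theta}_{p,s(p)}^{\So}(x))$; sparse concatenation with $\Phi^{\max}_{s(p)}$ (Proposition \ref{prop:concat} combined with Lemma \ref{lem:minmaxrelu}) gives $\max_j \tilde{\theta}_{p,j}^{\So}(x)=\theta_p^{\So}(x)$ by the identity above.

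The main subtlety is that Proposition \ref{prop:parallel} requires all parallelized sub-networks to have the \emph{same} depth. For each $j$, the patch $\tilde{\omega}_j(p)$ of \eqref{def:omegatilde} is convex and consists of exactly $d+1$ simplices $\tilde{T}_{0j},\ldots,\tilde{T}_{dj}$. Hence applying Proposition \ref{prop:HLXZ2020basis} to each $\tilde{\theta}_{p,j}^{\So}$ yields networks $\tilde{\Phi}^{CPwL}_{p,j}$ of identical depth at most $5+\log_2(d+1)$ and identical size bound $C d(d+1)$, so no padding by identity networks is necessary.

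Finally I would compute the complexity bounds directly. For the depth, by Propositions \ref{prop:parallel} and \ref{prop:concat} together with Lemma \ref{lem:minmaxrelu},
\begin{align*}
\depth(\Phi_p^{CPwL}) \leq \depth(\Phi^{\max}_{s(p)}) + \depth\!\bigl(\Parallel{\tilde{\Phi}^{CPwL}_{p,1},\ldots,\tilde{\Phi}^{CPwL}_{p,s(p)}}\bigr) \leq (2+\log_2 s(p)) + (5+\log_2(d+1)).
\end{align*}
For the size, Proposition \ref{prop:parallel} gives $\size\!\bigl(\Parallel{\tilde{\Phi}^{CPwL}_{p,1},\ldots,\tilde{\Phi}^{CPwL}_{p,s(p)}}\bigr) \leq s(p)\cdot C d(d+1)\leq C d^2 s(p)$, and Proposition \ref{prop:concat} with Lemma \ref{lem:minmaxrelu} yields
\begin{align*}
\size(\Phi_p^{CPwL}) \leq 2\size(\Phi^{\max}_{s(p)}) + 2 s(p)\cdot Cd(d+1) \leq C s(p) + C d^2 s(p) \leq C d^2 s(p),
\end{align*}
as claimed. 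The main obstacle I anticipate is justifying the extension of the max identity to all of $\domain$ (i.e., the non-negativity and support argument for the $\tilde{\theta}_{p,j}^{\So}$), since Theorem \ref{thm:theorem_min_max_basis_functions} is stated only on $\omega(p)$; everything else is a direct bookkeeping exercise in the NN calculus.
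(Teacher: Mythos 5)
Your proposal is correct and follows essentially the same route as the paper: invoke Theorem \ref{thm:theorem_min_max_basis_functions} for the identity $\theta_p^{\So}=\max_j\tilde{\theta}_{p,j}^{\So}$, note that all $\tilde{\Phi}^{CPwL}_{p,j}$ have equal depth because each patch $\tilde{\omega}_j(p)$ consists of exactly $d+1$ simplices, and then apply Propositions \ref{prop:parallel} and \ref{prop:concat} with Lemma \ref{lem:minmaxrelu} to get the identical depth and size bounds. Your extra care in extending the max identity from $\omega(p)$ to all of $\domain$ via non-negativity and the support of the $\tilde{\theta}_{p,j}^{\So}$ is sound and is in fact already implicit in the paper's proof of Theorem \ref{thm:theorem_min_max_basis_functions}, which establishes $0\le\tilde{\theta}_{p,j}^{\So}\le\theta_p^{\So}$ on all of $\domain$.
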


    \section{NN emulation of lowest order conforming FE spaces. Approximation rates.}
    \label{sec:approximation}

    Having defined explicit constructions of NN emulations of shape
    functions for all finite elements in the discrete de Rham complex
    of the lowest polynomial order \eqref{discderham}, we are now in
    position to formulate and prove our main results: 
    \emph{exact NN emulations}
    of each of the lowest order FE spaces in the de Rham complex,
    on regular, simplicial partitions $\calT$ of polytopal domains
    $\domain \subset \bbR^d$.  For
    $\blacklozenge \in \{ \So, \Ne, \RT, \Sz \}$, we obtain a vector
    space of NNs
    $\calNN(\blacklozenge;\calT,\domain) = \{ \Phi^{\blacklozenge,v} :
    v\in\blacklozenge(\calT,\domain) \}$ such that the realization of
    each NN $\Phi^{\blacklozenge,v}$ equals $v$ a.e. in $\domain$.

    With the networks $\calNN(\blacklozenge;\calT,\domain)$ at hand,
    we may lift known approximation results for finite elements to
    obtain constructive NN approximations of arbitrary functions in
    the Sobolev spaces belonging to the de Rham complex
    \eqref{derham}.

    Accordingly, we first construct NN emulations of the FE spaces in
    Proposition \ref{prop:basisnet}, from which the approximation
    results are derived in Theorem \ref{thm:derhamapx}.  To present
    the next statement, we define
    $ \mathfrak{s}(\calF) := \max_{f\in \calF} s(f) \leq 2 $,
    $ \mathfrak{s}(\calT) := 1$ and $ s(T) := 1 $ for all
    $ T\in \calT $.
    \begin{proposition}
      \label{prop:basisnet}
      Let $\domain\subset\R^d$, $d\geq2$, be a bounded, polytopal
      domain.  For every regular, simplicial triangulation $\calT$ of
      $\domain$ and every $\blacklozenge \in \{ \So, \Ne, \RT, \Sz \}$
      (with the N\'{e}d\'{e}lec space $\blacklozenge = \Ne$ excluded
      if $d > 3$),
      there exists a NN
      $\Phi^\blacklozenge :=\Phi^{\blacklozenge(\calT,\domain)}$ with
      ReLU and BiSU activations, which in parallel emulates the shape
      functions $\{ \theta^\blacklozenge_i \}_{i\in \calI}$ for
      $\calI \in \{ \calV, \calE, \calF, \calT \}$, respectively, that
      is
      $ \realiz{\Phi^\blacklozenge}\colon \domain \to \R^{|\calI|} $
      satisfies
      \begin{equation*}
        \realiz{\Phi^\blacklozenge}(x)_i
        = \, \theta^\blacklozenge_i(x)
        \quad\text{ for a.e. } x\in\domain
        \text{ and all } i\in\calI.
      \end{equation*}

      There exists $C>0$ independent of $d$ and $\calT$ such that
      \begin{align*}
        \depth(\Phi^\blacklozenge)
	= &\, \begin{cases} 5 & \text{ if } \blacklozenge \in \{ \So, \Ne, \RT \},
          \\ 3 &  \text{ if } \blacklozenge = \Sz, \end{cases}
	\\
        \size(\Phi^\blacklozenge) \le
          & C d^2 \sum_{i\in \calI} s(i) \le C d^2\mathfrak{s}(\calI) \dim( \blacklozenge(\calT,\domain)).
      \end{align*}

      For $\blacklozenge \in \{ \So, \Ne, \RT, \Sz \}$ and for every
      FE function
      $v = \sum_{i\in\calI} v_i \theta^\blacklozenge_i
      \in\blacklozenge(\calT,\domain)$, there exists a NN
      $\Phi^{\blacklozenge,v} :=
      \Phi^{\blacklozenge(\calT,\domain),v}$ with ReLU and BiSU
      activations, such that for a constant $C>0$ independent of $d$
      and $\calT$
      \begin{align*}
        \realiz{\Phi^{\blacklozenge,v}}(x)
	= &\, v(x)
            \quad\text{ for a.e. } x\in\domain
            ,
	\\
        \depth(\Phi^{\blacklozenge,v})
	= &\, \begin{cases} 5 & \text{ if } \blacklozenge \in \{ \So, \Ne, \RT \},
          \\ 3 &  \text{ if } \blacklozenge = \Sz, \end{cases}
	\\
        \size(\Phi^{\blacklozenge,v})
        \leq
          &\, C d^2 \sum_{i\in\calI} s(i)
            \leq C d^2 \mathfrak{s}(\calI) \dim( \blacklozenge(\calT,\domain)).
      \end{align*}

      The layer dimensions and the lists of activation functions of
      $\Phi^\blacklozenge$ and $\Phi^{\blacklozenge,v}$ are
      independent of $v$ and only depend on $\calT$ through
      $ \{s(i)\}_{i\in\calI} $ and
      $ \snorm{\calI} = \dim(\blacklozenge(\calT,\domain)) $.

      For each $\blacklozenge \in \{ \So, \Ne, \RT, \Sz \}$, the set
      \begin{equation}\label{eq:NNFEMDef}
        \calNN(\blacklozenge;\calT,\domain)
	:= \{ \Phi^{\blacklozenge,v} : v \in \blacklozenge(\calT,\domain) \}\;,
      \end{equation}
      together with the linear operation
      \begin{align}
        \label{eq:NNFEMlin}
        \Phi^{\blacklozenge,v} \widehat{+} \lambda\Phi^{\blacklozenge,w}
	:= &\, \Phi^{\blacklozenge,v+\lambda w},
             \qquad
             \text{ for all } v,w\in \blacklozenge(\calT,\domain) \text{ and }\lambda\in\R
      \end{align}
      is a vector space, and the map
      $\realiz{\cdot}: \calNN(\blacklozenge;\calT,\domain) \to
      \blacklozenge(\calT,\domain)$ 
      is a linear isomorphism.
    \end{proposition}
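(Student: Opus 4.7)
The plan is to build $\Phi^\blacklozenge$ by parallelizing (via Proposition \ref{prop:parallel}) the individual shape‐function emulations constructed in Section \ref{sec:nnfem}: namely $\Phi^{\Sz}_T$ from \eqref{dnnS0} for $\blacklozenge=\Sz$, $\Phi^{\RT}_f$ from Proposition \ref{prop:RT} for $\blacklozenge=\RT$, $\Phi^{\Ne}_e$ from Proposition \ref{prop:NE} for $\blacklozenge=\Ne$, and $\Phi^{\So}_p$ from Proposition \ref{prop:S1} for $\blacklozenge=\So$. First I would observe that within each family the constituent networks all share a common depth ($3$ for $\Sz$, $5$ for the remaining three), so Proposition \ref{prop:parallel} applies directly without the need for identity padding. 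The realization of the parallelized network then has, in the $i$-th output slot, exactly $\theta^\blacklozenge_i$ (for vector‐valued $\Ne,\RT$ shape functions, a block of $d$ consecutive slots). Summing the per-shape-function size bounds gives $\size(\Phi^\blacklozenge)\le C d^2\sum_{i\in\calI}s(i)$, which in turn is bounded by $C d^2\mathfrak{s}(\calI)|\calI| = C d^2\mathfrak{s}(\calI)\dim(\blacklozenge(\calT,\domain))$.

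Next I would construct $\Phi^{\blacklozenge,v}$ directly from $\Phi^\blacklozenge$ by collapsing the output layer: since the last layer of $\Phi^\blacklozenge$ is affine with identity activation and yields the shape-function values in separate coordinates, the coefficients $v=(v_i)_{i\in\calI}$ may be folded into that layer by replacing its weight matrix $A_L$ and bias $b_L$ with $v^\top A_L$ and $v^\top b_L$ (treating each vector-valued shape function block coordinate-wise so as to output a single $d$-dimensional image). This modification neither alters the hidden layers nor introduces new activations, and the number of nonzero entries cannot exceed that of the original output layer, so the depth and size bounds stated for $\Phi^{\blacklozenge,v}$ are inherited from those of $\Phi^\blacklozenge$. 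Exactness $\realiz{\Phi^{\blacklozenge,v}}=v$ a.e. on $\domain$ then follows from linearity of the output layer and the exactness of each $\realiz{\Phi^\blacklozenge}_i=\theta^\blacklozenge_i$. The independence statement is immediate from this construction: the weight matrices and biases of all hidden layers, as well as the list of activations in every layer (including $\Id_\R$ in the output), depend only on $\calT$ and the combinatorial data $\{s(i)\}_{i\in\calI}$.

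For the final vector-space and linear-isomorphism claim, I would argue that the assignment $v\mapsto \Phi^{\blacklozenge,v}$ is a bijection between $\blacklozenge(\calT,\domain)$ and $\calNN(\blacklozenge;\calT,\domain)$: surjectivity holds by \eqref{eq:NNFEMDef}, and injectivity because $\realiz{\Phi^{\blacklozenge,v}}=v$ together with the fact that $\{\theta^\blacklozenge_i\}_{i\in\calI}$ is a basis forces $v$ to be uniquely determined. Defining $\widehat{+}$ by \eqref{eq:NNFEMlin} then transports the vector-space structure of $\blacklozenge(\calT,\domain)$ to $\calNN(\blacklozenge;\calT,\domain)$ and renders $\realiz{\cdot}$ a linear isomorphism by construction; one only has to check consistency, i.e. that $\realiz{\Phi^{\blacklozenge,v}\widehat{+}\lambda\Phi^{\blacklozenge,w}} = v+\lambda w$, which is true by definition. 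The only mildly delicate step in the whole argument is matching the indexing conventions for the vector-valued families $\Ne$ and $\RT$ when collapsing the output layer, but this is purely bookkeeping; no new analytic ingredient is needed beyond the calculus of Section \ref{sec:nnoperations} and the shape-function constructions of Section \ref{sec:nnfem}.
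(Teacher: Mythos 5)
Your proposal is correct and follows essentially the same route as the paper: $\Phi^\blacklozenge$ is the parallelization (Proposition \ref{prop:parallel}) of the shape-function networks from \eqref{dnnS0} and Propositions \ref{prop:RT}, \ref{prop:NE}, \ref{prop:S1}, and $\Phi^{\blacklozenge,v}$ is obtained by folding the coefficients $v_i$ into the output layer, which is exactly the network the paper builds via the sum of Proposition \ref{prop:sum} applied to output-scaled copies (cf.\ Remark \ref{rem:basisnet}). The concluding bijection/linear-isomorphism argument also matches the paper's.
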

    \begin{remark}
      Note that $\sum_{i \in \calI} s(i) \leq c(\calI,d) \snorm\calT$,
      where $c(\calV,d) = d+1$ is the number of vertices of a
      $d$-simplex, $c(\calE,d)$ the number of edges of a $d$-simplex,
      $c(\calF,d)$ the number of faces of a $d$-simplex and
      $C(\calT,d) = 1$.  We obtain this inequality by observing that
      each element $T \in \calT$ contributes $+1$ to $c(\calI,d)$
      terms $s(i)$.  Therefore, we also have the bound
      $ \size(\Phi^{\blacklozenge}) \leq Cd^2 c(\calI,d)
      \snorm\calT$, independent of the shape regularity constant
      $ C_{\rm sh} $ of $ \calT $.
      The same bound holds for $\size(\Phi^{\blacklozenge,v})$.
    \end{remark}

    \begin{definition}
      \label{def:basisnet}
      For a given polytopal domain $ \domain \subset \R^d $, $ d\ge2 $ and a
      regular, simplicial triangulation $ \calT $ on $ \domain $, we
      call the network $ \Phi^{\blacklozenge} $ defined in Proposition
      \ref{prop:basisnet} a \emph{$\blacklozenge$-basis net}.
    \end{definition}

\begin{proofof}{Proposition \ref{prop:basisnet}}
  We define $\Phi^{\blacklozenge(\calT,\domain)}$ as the
  parallelization of networks from Propositions \ref{prop:S1},
  \ref{prop:NE}, \ref{prop:RT} or Equation \eqref{dnnS0}, namely
  $\Phi^{\blacklozenge(\calT,\domain)} := \Parallel{ \{
    \Phi^\blacklozenge_i \}_{i\in \calI} }$, from which the formula
  for the realization, the formula for the NN depth and the bound on
  the NN size of $\Phi^{\blacklozenge(\calT,\domain)}$ directly follow
  with Proposition \ref{prop:parallel}.

  The NN $\Phi^{\blacklozenge(\calT,\domain),v}$ is defined as the sum
  $\Phi^{\blacklozenge(\calT,\domain),v} := \sum_{i\in\calI}
  v_i\Phi^\blacklozenge_i$, where the sum of NNs is as defined in
  Proposition \ref{prop:sum}, and where the NNs
  $v_i\Phi^\blacklozenge_i$ are obtained from those in Propositions
  \ref{prop:S1}, \ref{prop:NE}, \ref{prop:RT} and Equation
  \eqref{dnnS0} by scaling all weights and biases in the last layer by
  $v_i$.  The formula for the realization, the formula for the depth
  and the bound on the NN size follow with Proposition \ref{prop:sum}.

  By comparing the definition of the parallelization in Proposition
  \ref{prop:parallel} and the sum in Proposition \ref{prop:sum}, we
  observe that their hidden layers are equal.  Therefore, the hidden
  layers of $\Phi^{\blacklozenge(\calT,\domain),v}$ and of
  $\Phi^{\blacklozenge(\calT,\domain)}$ coincide.

  By definition of $\Phi^{\blacklozenge(\calT,\domain),v}$ as linear
  combination of the basis NNs
  $\{ \Phi^\blacklozenge_i \}_{i\in \calI}$, which are the same for
  all $v$, the NN $\Phi^{\blacklozenge(\calT,\domain),v}$ is
  determined uniquely by the coefficients $\{ v_i \}_{i\in \calI}$.
  Therefore,
  $\realiz{\cdot}: \calNN(\blacklozenge;\calT,\domain) \to
  \blacklozenge(\calT,\domain)$ is a bijection.  With the linear
  operations defined in \eqref{eq:NNFEMlin}, this map is linear by
  definition, thus a linear isomorphism.
\end{proofof}

\begin{remark}
  \label{rem:basisnet}
  For all
  $v = \sum_{i\in\calI} v_i \theta^\blacklozenge_i
  \in\blacklozenge(\calT,\domain)$, for
  $\bsv = (v_i)_{i\in\calI} \in\R^{\snorm{\calI}}$, the network
  $\Phi^{\blacklozenge,v}$ can be obtained from $\Phi^\blacklozenge$
  as follows.
  Denoting the last layer weight matrix and bias vector of
  $\Phi^\blacklozenge_i$ by $A^{(i)}$ and $b^{(i)}$, those of
  $\Phi^\blacklozenge$ are given by
  $A = \operatorname{diag}( A^{(i_1)}, \ldots, A^{(i_{\snorm{\calI}})}
  )$ and
  $b = ( ( b^{(i_1)} )^\top, \ldots, ( b^{(i_{\snorm{\calI}})} )^\top
  )^\top$, and those of $\Phi^{\blacklozenge,v}$ are given by
  $ ( v_{i_1} A^{(i_1)}, \ldots, v_{i_{\snorm{\calI}}}
  A^{(i_{\snorm{\calI}})} )$ and $\sum_{i\in\calI} v_i b^{(i)}$ for an
  enumeration $i_1, \ldots i_{\snorm{\calI}}$ of $\calI$.

  Note that the sum defined in \eqref{eq:NNFEMlin} differs from the
  sum of neural networks from Proposition \ref{prop:sum}.  In
  \eqref{eq:NNFEMlin}, the hidden layers of
  $\Phi^{\blacklozenge,v+\lambda w}$ are independent of $v,w$ and
  $\lambda$ and depend only on $\blacklozenge(\calT,\domain)$.  These
  hidden layers coincide with those of $\Phi^{\blacklozenge}$, which
  emulates a basis of $\blacklozenge(\calT,\domain)$.

  For all $v\in\blacklozenge(\calT,\domain)$ there exists a unique NN
  $\Phi^{\blacklozenge,v} \in \calNN(\blacklozenge;\calT,\domain)$
  which realizes $v$.  However, there exist many other NNs, not in
  $\calNN(\blacklozenge;\calT,\domain)$, with the same realization.
\end{remark}

We apply the previous results to quasi-uniform, shape-regular families
of meshes $ \{\calT_h\}_{h > 0} $ in dimension $ d = 2,3$.  For
$ V = H^1(\domain)$, $H^0(\curl,\domain)$ for $d=3$,
$H^0(\divv,\domain)$ or $ L^2(\domain) $, define the template for the
respective smoothness space $ V^{\bullet} \subset V $ as follows
\begin{equation}
  \begin{split} \label{smoothsp}
    V = H^1(\domain) &\longleftrightarrow V^{\bullet} = H^2(\domain), \\
    \text{for } d=3: \qquad V = H^0(\curl,\domain)
    &\longleftrightarrow V^{\bullet} = H^1(\curl,\domain) := \set{v
      \in [H^{1}(\domain)]^d }{\curl v \in [H^{1}(\domain)]^d },
    \\ 
    V = H^0(\divv,\domain) &\longleftrightarrow V^{\bullet} = H^1(\divv,\domain) := \set{v \in [H^1(\domain)]^d}{\divv v \in H^1(\domain)}, \\
    V = L^2(\domain) &\longleftrightarrow V^{\bullet} = H^1(\domain).
  \end{split}
\end{equation}
We arrive at the following result.
\begin{theorem} \label{thm:derhamapx} 
  Given a bounded, contractible polytopal Lipschitz domain
  $ \domain \subset \R^{d}, d = 2,3 $, assume that
  $(V,\blacklozenge) \in \{ (H^1(\domain), \So)$,
  $ (H^0(\curl,\domain),\Ne)$, $( H^0(\divv,\domain),\RT ) $,
  $ (L^2(\domain), \Sz) \}$, that the regularity space
  $ V^{\bullet} \subset V $ is as in \eqref{smoothsp} and that $d=3$
  if $V = H^0(\curl,\domain)$.

  Assume given a family $\{ \calT_h \}_{h>0}$ of regular, simplicial
  partitions of the polytopal domain $ \domain $ which are uniformly
  shape-regular and quasi-uniform with respect to the mesh-size
  parameter $h$.

  Then there exists a constant $ C > 0$ (depending only on the shape
  regularity parameter $ C_{\rm sh} $ of the family
  $\{ \calT_h \}_{h}$ and on $ d $) 
  such that for all $ h > 0 $ and
  for every $v\in V^\bullet$, 
  there exists
  $\Phi_h \in \calNN(\blacklozenge;\calT_h,\domain)$ such that
  \begin{equation*}
    \norm[V]{v - \realiz{\Phi_h}}\le \, C h \norm[V^{\bullet}]{v}
  \end{equation*}
  and
  \begin{equation*}
    L(\Phi_h)
    = \,
    \begin{cases} 5 & \text{ if } \blacklozenge \in \{ \So, \Ne, \RT \},
      \\ 3 &  \text{ if } \blacklozenge = \Sz, \end{cases}
    \qquad
    M(\Phi_h) \le C h^{-d}.
  \end{equation*}
\end{theorem}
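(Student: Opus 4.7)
The plan is to combine classical finite element interpolation error estimates with the exact NN emulation of FE spaces from Proposition \ref{prop:basisnet}. Since $\calNN(\blacklozenge;\calT_h,\domain)$ is via $\realiz{\cdot}$ isomorphic to $\blacklozenge(\calT_h,\domain)$, it suffices to exhibit a suitable FE approximant $v_h \in \blacklozenge(\calT_h,\domain)$ and set $\Phi_h := \Phi^{\blacklozenge,v_h}$.

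First I would select the canonical de Rham interpolant $v_h := \Pi_\blacklozenge v \in \blacklozenge(\calT_h,\domain)$ corresponding to the case at hand (Lagrange interpolant for $\So$, N\'ed\'elec interpolant for $\Ne$, Raviart-Thomas interpolant for $\RT$, $L^2$-projection or piecewise-mean for $\Sz$). On uniformly shape-regular, quasi-uniform families $\{\calT_h\}_{h>0}$, the standard first-order interpolation estimates (as found, e.g., in \cite[Chapters 17--19]{ErnGuermondBookI2021}) yield, for every $v \in V^\bullet$,
\begin{equation*}
  \norm[V]{v - \Pi_\blacklozenge v} \le C h \norm[V^\bullet]{v},
\end{equation*}
where $C>0$ depends only on $C_{\rm sh}$ and $d$. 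Since $v_h \in \blacklozenge(\calT_h,\domain)$, Proposition \ref{prop:basisnet} supplies a network $\Phi_h := \Phi^{\blacklozenge,v_h} \in \calNN(\blacklozenge;\calT_h,\domain)$ with $\realiz{\Phi_h} = v_h$ a.e.\ in $\domain$, and the approximation bound follows immediately by taking $\realiz{\Phi_h}$ in place of $\Pi_\blacklozenge v$.

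Next I would read off the complexity bounds directly from Proposition \ref{prop:basisnet}. The depth is exactly $5$ in the cases $\blacklozenge \in \{\So,\Ne,\RT\}$ and $3$ in the case $\blacklozenge = \Sz$, with no $h$-dependence. For the size, Proposition \ref{prop:basisnet} gives
\begin{equation*}
  \size(\Phi_h) \le C d^2\, \mathfrak{s}(\calI)\, \dim(\blacklozenge(\calT_h,\domain)).
\end{equation*}
Under the assumed uniform shape regularity, $\mathfrak{s}(\calI) \le C(C_{\rm sh},d)$ (see \cite[Remark 11.5 and Proposition 11.6]{ErnGuermondBookI2021}), and by quasi-uniformity $|\calT_h| \le C h^{-d}$, which bounds $\dim(\blacklozenge(\calT_h,\domain))$ (cardinalities of vertices, edges, faces, and elements) by $C h^{-d}$ with $C$ depending only on $C_{\rm sh}$ and $d$. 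Combining these two estimates yields $\size(\Phi_h) \le C h^{-d}$ as required.

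No step poses a genuine obstacle: the substantive work (exact NN emulation with sharp size bounds on arbitrary regular simplicial meshes) has already been carried out in Proposition \ref{prop:basisnet}, and the interpolation estimates are classical. The only point requiring a little care is that $V^\bullet$ is chosen exactly so that the canonical $\blacklozenge$-interpolant is well-defined and enjoys an $\calO(h)$ estimate in $V$; one must invoke the right interpolation result per row of \eqref{smoothsp}, which is a routine bookkeeping task rather than a mathematical difficulty.
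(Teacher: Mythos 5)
Your proposal is correct and follows essentially the same route as the paper: pick a first-order accurate FE approximant $v_h\in\blacklozenge(\calT_h,\domain)$ (the paper invokes the estimates of \cite[Theorems 11.13 and 16.4]{ErnGuermondBookI2021}, \cite{AlonsoValli1999} and Poincar\'e's inequality rather than naming the canonical interpolants, cf.\ Remark \ref{remk:FEAprx}), emulate it exactly via Proposition \ref{prop:basisnet}, and read off the depth and the size bound from $\dim(\blacklozenge(\calT_h,\domain))\sim h^{-d}$ together with the uniform bound on $\mathfrak{s}(\calI)$. No substantive difference.
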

\begin{proof}
  Let $V_{h} = \blacklozenge(\calT_h,\domain)$ denote the lowest order
  FE space corresponding to $V$.  
  By Proposition \ref{prop:basisnet},
  for all $v_h\in V_h$ there exists a NN
  $\Phi_h := \Phi^{\blacklozenge,v_h} \in
  \calNN(\blacklozenge;\calT_h,\domain)$ such that
  $\realiz{\Phi_h}(x) = v_h(x)$ for a.e. $x\in\domain$.  In
  particular, for all $v_h\in V_h$ and all $v\in V^{\bullet}$
  \begin{align*}
    \norm[V]{v - \realiz{\Phi_h}} = \norm[V]{v - v_{h}}.
  \end{align*}
  We can then apply the approximation results e.g.  \cite[Theorem
  11.13]{ErnGuermondBookI2021} for $ V = H^1(\domain) $,
  \cite[Equations (5.7) and (5.8)]{AlonsoValli1999} for
  $ V = H^0(\curl,\domain) $ in case $d=3$, \cite[Theorem
  16.4]{ErnGuermondBookI2021} for $ V = H^0(\divv,\domain) $ and
  Poincar\'e's inequality for $ V = L^2(\domain) $.  More precisely,
  for a constant $ C $ only dependent on $ C_{\mathrm{sh}} $ and on
  $ d $,
  for all $v\in V^\bullet$ there exists $v_h\in V_h$ for which
  \begin{align}\label{eq:apxproperty}
    \norm[V]{v - v_{h}}
    \le Ch\norm[V^{\bullet}]{v}.
  \end{align}

  The formula for $ L(\Phi_h)$ follows from Proposition
  \ref{prop:basisnet}.  In addition, the bound on the NN size follows
  from Proposition \ref{prop:basisnet}, together with
  $ \dim(V_h)\sim h^{-d} $ as $h\downarrow 0$
  and the fact that $\mathfrak{s}(\calI)$,
  $ \calI\in \{\calV,\calE,\calF,\calT\} $ is bounded from above by a
  constant depending only on $ C_{\mathrm{sh}} $.
\end{proof}

\begin{remark}
  \label{remk:FEAprx}
  In \eqref{eq:apxproperty} in the proof of the theorem, the choice of
  $v_h$ (depending on $v$, given in the cited references) is made to
  have the approximation property \eqref{eq:apxproperty}.  However,
  other choices of $v_h \in V_h$ based on interpolation or
  quasi-interpolation can equally be emulated with NNs.  In
  \cite[Corollary 5.3]{ErnGuerFEQuasI}, the authors give a particular
  definition of quasi-interpolants in
  $V_{h} = \blacklozenge(\calT_h,\domain)$
  for $\blacklozenge \in \{ \So, \Ne, \RT \}$,
  requiring minimal regularity of the
  function $v$.  This gives existence of a constant $ C > 0 $ that is
  independent of $ v, h $ such that for all
  $ v \in [W^{r,p}(\domain)]^{d_L} $ there exists a
  $ \Phi_h \in \calNN(\blacklozenge;\calT_h,\domain) $ satisfying, for
  any $ p \in [1,\infty] $, $ r \in \{0,1\} $ or any
  $ p \in [1,\infty), r \in (0,1) $
  \begin{equation}
    \norm[{[L^p(\domain)]^{d_L}}]{v - \realiz{\Phi_h} }
    \le C h^r \snorm[{[W^{r,p}(\domain)]^{d_L}}]{v} \;.
  \end{equation}
  Here $ d_L = d $ if $ V_{h} = \RT(\calT_h, \domain) $ or
  $ V_{h} = \Ne(\calT_h, \domain) $ and $ d_L=1 $ otherwise.  See
  e.g. \cite[Section 2.2]{ErnGuermondBookI2021} for a definition of
  the Sobolev space $W^{r,p}(\domain)$ in which this result is stated.
\end{remark}
The following analogue of Proposition \ref{prop:basisnet} for ReLU
emulation of $\So$ also holds.

\begin{proposition}
  \label{prop:CPLbasisnet}
  Let $\domain\subset\R^d$, $d\geq2$, be a bounded, polytopal domain.
  For every regular, simplicial triangulation $\calT$ of $\domain$,
  there exists a NN $\Phi^{CPwL} :=\Phi^{CPwL(\calT,\domain)}$ with
  only ReLU activations, which in parallel emulates the shape
  functions $\{ \theta^{\So}_i \}_{i\in \calI}$ for $\calI = \calV$.
  That is, $ \realiz{\Phi^{CPwL}}\colon \domain \to \R^{|\calI|} $
  satisfies
  \begin{equation*}
    \realiz{\Phi^{CPwL}}(x)_i
    = \, \theta^{\So}_i(x)
    \quad\text{ for all } x\in\domain
    \text{ and all } i\in\calI.
  \end{equation*}
  There exists $C>0$ independent of $d$ and $\calT$ such that
  \begin{align*}
    \depth(\Phi^{CPwL})
    \leq &\, 8 + \log_2( \mathfrak{s}(\calI) ) + \log_2(d+1)
           ,
    \\
    \size(\Phi^{CPwL})
    \leq & C \snorm{\calI} \log_2( \mathfrak{s}(\calI) ) + C d^2 \sum_{i\in\calI} s(i)
           \leq C d^2 \mathfrak{s}(\calI) \dim( \So(\calT,\domain)).
  \end{align*}

  For all
  $v = \sum_{i\in\calI} v_i \theta^{\So}_i \in\So(\calT,\domain)$,
  there exists a NN $\Phi^{CPwL,v} := \Phi^{CPwL(\calT,\domain),v} $
  with only ReLU activations, such that for a constant $C>0$
  independent of $d$ and $\calT$
  \begin{align*}
    \realiz{\Phi^{CPwL,v}}(x)
    = &\, v(x)
	\quad\text{ for all } x\in\domain
	,
    \\
    \depth(\Phi^{CPwL,v})
    \leq &\, 8 + \log_2( \mathfrak{s}(\calI) ) + \log_2(d+1)
           ,
    \\
    \size(\Phi^{CPwL,v})
    \leq &\, C \snorm{\calI} \log_2( \mathfrak{s}(\calI) ) + C d^2 \sum_{i\in\calI} s(i)
           \leq C d^2 \mathfrak{s}(\calI) \dim( \So(\calT,\domain)).
  \end{align*}

  The layer dimensions and the lists of activation functions of
  $\Phi^{CPwL}$ and $\Phi^{CPwL,v}$ are independent of $v$ and only
  depend on $\calT$ through $ \{s(i)\}_{i\in\calI} $ and
  $ \snorm{\calI} = \dim(\So(\calT,\domain)) $.

  The set
  $\calNN(CPwL;\calT,\domain) := \{ \Phi^{CPwL,v} : v \in
  \So(\calT,\domain) \}$ together with the linear operation
  $\Phi^{CPwL,v} \widehat{+} \lambda\Phi^{CPwL,w} := \Phi^{CPwL,v+\lambda w}$
  for all $v,w\in \So(\calT,\domain)$ and all $\lambda\in\R$ is a
  vector space.

  The realization map
  $\realiz{\cdot}: \calNN(CPwL;\calT,\domain) \to \So(\calT,\domain)$
  is a linear isomorphism.
\end{proposition}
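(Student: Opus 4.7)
The plan is to mirror the proof of Proposition \ref{prop:basisnet}, but to substitute the ReLU-only constructions from Section~\ref{sec:cpwlrelu} (Theorem~\ref{thm:relucpwl}) for the ReLU/BiSU emulations used there. First, for every vertex $p\in\calV$, take the ReLU network $\Phi_p^{CPwL}$ from Theorem~\ref{thm:relucpwl}, which realizes $\theta_p^{\So}$ on $\domain$ and satisfies $\depth(\Phi_p^{CPwL})\le 7+\log_2(s(p))+\log_2(d+1)$ and $\size(\Phi_p^{CPwL})\le C d^2 s(p)$.

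Next I would parallelize these $\snorm{\calV}$ networks via Proposition~\ref{prop:parallel} to obtain $\Phi^{CPwL}$. The main technical point is that Proposition~\ref{prop:parallel} requires all networks to have equal depth. Since $\depth(\Phi_p^{CPwL})$ depends on $s(p)$, for each $p$ with depth strictly smaller than $L_{\max}:=7+\log_2(\mathfrak{s}(\calV))+\log_2(d+1)$ I would sparsely concatenate $\Phi_p^{CPwL}$ on the output side with the ReLU identity network $\Phi^{\Id}_{1,L_{\max}-\depth(\Phi_p^{CPwL})}$ from Proposition~\ref{prop:idnn}. Each padding network has size at most $2(L_{\max}-\depth(\Phi_p^{CPwL}))\le 2L_{\max}$, and the realization is unchanged. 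Summing the padding sizes over $p\in\calV$ yields a contribution bounded by $C\snorm{\calI}\log_2(\mathfrak{s}(\calI))$ (absorbing the $\log_2(d+1)$ and constant terms), while the sum of the sizes of the unpadded networks is bounded by $Cd^2\sum_{i\in\calI} s(i)$. Combining via Proposition~\ref{prop:parallel} gives the stated depth and size bounds for $\Phi^{CPwL}$, and the realization identity follows since parallelization preserves realizations componentwise. The final bound $Cd^2\mathfrak{s}(\calI)\dim(\So(\calT,\domain))$ follows from $s(i)\le\mathfrak{s}(\calV)=\mathfrak{s}(\calI)$ and $\snorm{\calI}=\dim(\So(\calT,\domain))$.

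For an arbitrary $v=\sum_{i\in\calI}v_i\theta_i^{\So}\in\So(\calT,\domain)$, I define $\Phi^{CPwL,v}:=\sum_{i\in\calI}v_i\Phi^{CPwL}_i$ using the sum operation of Proposition~\ref{prop:sum}, where $v_i\Phi^{CPwL}_i$ is obtained by scaling the last-layer weights and biases of $\Phi_i^{CPwL}$ (after identity-padding as above) by $v_i$. By Proposition~\ref{prop:sum}, $\realiz{\Phi^{CPwL,v}}=\sum_{i\in\calI}v_i\theta_i^{\So}=v$ a.e.\ in $\domain$ (in fact everywhere, by continuity), the depth equals $L_{\max}$, and the size is bounded exactly as for $\Phi^{CPwL}$. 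Scaling the last layer does not alter the hidden layers, so the hidden-layer structure and activation lists of $\Phi^{CPwL,v}$ coincide with those of $\Phi^{CPwL}$, and in particular do not depend on $v$.

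The vector-space and isomorphism claims then follow exactly as in Proposition~\ref{prop:basisnet}: the operation $\widehat+$ is well defined by uniqueness of the basis expansion in $\So(\calT,\domain)$; linearity of $\realiz{\cdot}$ on $\calNN(CPwL;\calT,\domain)$ follows from the construction of $\Phi^{CPwL,v+\lambda w}$ having the same hidden layers as $\Phi^{CPwL,v}$ and $\Phi^{CPwL,w}$ and output weights/biases that are the corresponding linear combination; injectivity follows from the fact that $v\mapsto \realiz{\Phi^{CPwL,v}}=v$ is the identity on $\So(\calT,\domain)$, and surjectivity is the existence statement just proved. The main obstacle I anticipate is bookkeeping the depth padding to obtain the correct logarithmic-in-$\mathfrak{s}(\calV)$ contribution to the size, and confirming that the aggregated padding cost $C\snorm{\calI}\log_2(\mathfrak{s}(\calI))$ does not dominate the final bound for shape-regular meshes.
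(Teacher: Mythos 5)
Your proposal follows essentially the same route as the paper's proof: identity-pad each $\Phi_p^{CPwL}$ from Theorem~\ref{thm:relucpwl} via sparse concatenation with $\Phi^{\Id}_{1,L_i}$ so that all components have equal depth, parallelize via Proposition~\ref{prop:parallel}, form $\Phi^{CPwL,v}$ as the sum $\sum_i v_i \Phi^{\Id}_{1,L_i}\sconc\Phi^{CPwL}_i$ scaled in the last layer, and transfer the vector-space and isomorphism claims from Proposition~\ref{prop:basisnet}. The only cosmetic difference is that the paper pads \emph{every} component by at least a depth-one identity (taking $L_i = 1+\max_j\depth(\Phi^{CPwL}_j)-\depth(\Phi^{CPwL}_i)$, which is why the stated depth bound is $8+\cdots$ rather than $7+\cdots$), whereas you leave the deepest components unpadded and pad to a nominal non-integer $L_{\max}$; both yield the stated bounds, though defining the padding lengths from the actual integer depths as the paper does is the cleaner bookkeeping.
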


\begin{proof}
  We define $\Phi^{CPwL(\calT,\domain)}$ as the parallelization of
  networks from Theorem \ref{thm:relucpwl}, namely
  $\Phi^{CPwL(\calT,\domain)} := \Parallel{ \{ \Phi^{\Id}_{1,L_i}
    \sconc \Phi^{CPwL}_i \}_{i\in \calI} }$ for
  $L_i = 1 + \max_{j\in\calI} \depth( \Phi^{CPwL}_j ) - \depth(
  \Phi^{CPwL}_i )$, such that all components of the parallelization
  have equal depth.  For the depth and size of the components, we
  obtain with Theorem \ref{thm:relucpwl}
  \begin{align*}
    \depth( \Phi^{\Id}_{1,L_i} \sconc \Phi^{CPwL}_i )
    \leq &\, 1 + ( 7 + \log_2( \mathfrak{s}(\calI) ) + \log_2(d+1) )
           ,
    \\
    \size( \Phi^{\Id}_{1,L_i} \sconc \Phi^{CPwL}_i )
    \leq &\, C \size( \Phi^{\Id}_{1,L_i} ) + C \size( \Phi^{CPwL}_i )
           \leq C ( 8 + \log_2( \mathfrak{s}(\calI) ) + \log_2(d+1) )
           + C d^2 s(i)
           ,
  \end{align*}
  from which the stated results follow with Proposition
  \ref{prop:parallel} by the same arguments as in the proof of
  Proposition \ref{prop:basisnet}.

  The NN $\Phi^{CPwL(\calT,\domain),v}$ is defined as the sum
  $\Phi^{CPwL(\calT,\domain),v} := \sum_{i\in\calI}
  v_i\Phi^{\Id}_{1,L_i} \sconc \Phi^{CPwL}_i$.
  The results now follow from Proposition \ref{prop:sum} as in the
  proof of Proposition \ref{prop:basisnet}.
\end{proof}

Definition \ref{def:basisnet} and Remark \ref{rem:basisnet} apply,
with $CPwL$ instead of $\blacklozenge$, $\So(\calT,\domain)$ instead
of $\blacklozenge(\calT,\domain)$ and
$\Phi^{\Id}_{1,L_i} \sconc \Phi^{CPwL}_i$ instead of
$\Phi^\blacklozenge_i$.  In addition, a result analogous to Theorem
\ref{thm:derhamapx} follows from Proposition \ref{prop:CPLbasisnet},
with the formula for the depth replaced by $\depth(\Phi_h) \leq C$ for
a constant $ C > 0 $ only dependent on $ C_{\mathrm{sh}} $ and $ d $.
\section{Neural emulation of trace spaces}
\label{sec:traces}
%
In the previous sections, we have developed ReLU NN emulations of the
lowest order, de Rham compatible Finite Elements on cellular complexes
in the bounded Lipschitz polyhedral domains $\domain\subset\R^3$.  In
certain applications, however, corresponding boundary complexes are
required; we mention only variational boundary integral equations
which arise in computational electromagnetism (e.g.\
\cite{BHvS03_22,BDKSVW2020} and the references there).  We approximate
traces on the boundary $\Gamma = \partial\domain$, which is a finite
union of plane sides, with the network constructions developed
in Section \ref{sec:approximation} for $d=2$.  As has been emphasized
e.g.\ in \cite{BDKSVW2020}, trace spaces of the spaces occurring in
the de Rham complex satisfy exact sequence properties derived from the
compatibility of the corresponding sequences in $\domain$. We refer
to \cite{BHvS03_22,BDKSVW2020} and the references there for a
definition and basic properties of these spaces.
We recall the trace operators (e.g.\ from \cite[Definition 2.1]{BDKSVW2020}):
\begin{subequations}
  \label{eq:tracedef}
  \begin{align}
    \label{eq:tracedir}
    & \gamma_0: H^1(\domain) \to H^{1/2}(\Gamma) :
    & \gamma_0(u)(x_0) = & \lim_{x\to x_0} u(x),
    \\
    \label{eq:tracedirtan}
    & \breve\gamma_0: H^0(\curl,\domain) \to H^{-1/2}(\curl_\Gamma,\Gamma) :
    & \breve\gamma_0(u)(x_0) =  & \lim_{x\to x_0} u(x) - (u(x)\cdot n_{x_0}) n_{x_0},
    \\
    \label{eq:tracetan}
    & \gamma_t: H^0(\curl,\domain) \to H^{-1/2}_\times(\divv_\Gamma,\Gamma) :
    & \gamma_t(u)(x_0) = & \lim_{x\to x_0} u(x) \times n_{x_0},
    \\
    \label{eq:tracenor}
    & \gamma_n: H^0(\divv,\domain) \to H^{-1/2}(\Gamma) :
    & \gamma_n(u)(x_0) = & \lim_{x\to x_0} u(x) \cdot n_{x_0},
  \end{align}
\end{subequations}
for almost all $x_0\in\Gamma$, where we use $x$ to denote points in
$\domain$, and where $n_{x_0}$ denotes the outward unit normal to
$\Gamma$ in $x_0$.  These trace operators render the diagram in Figure
\ref{fig:boundaryderham} commutative (e.g.\ \cite[Figure
2]{BDKSVW2020}).  The trace operators in \eqref{eq:tracedef} are
surjective (e.g. \cite[Theorem 1]{BDKSVW2020}), thus the fractional
Sobolev spaces on $\Gamma$ in \eqref{eq:tracedef} comprise precisely
all traces of elements of the respective function spaces on $\domain$.
In addition, the trace operators in \eqref{eq:tracedef} are continuous
with respect to the norms defined in \cite[Section 2]{BDKSVW2020}, see
\cite[Theorem 1]{BDKSVW2020}.

\begin{figure} [H]
  \[ \xymatrix@C=2cm{
      H^1(\domain)\ar^{\gamma_0}[dd]\ar^-{\operatorname{grad}}[r] &
      H^0(\curl,\domain) \ar^{\breve\gamma_0}[d] \ar^-{\curl}[r]
      \ar_<<<<<<<<{\gamma_t}@/_1.5cm/[ddd]
      & H^0(\divv,\domain) \ar^{\gamma_n}[dd] \\
      & H^{-1/2}(\curl_{\Gamma},\Gamma) \ar^{\cdot\times n}[dd] \ar^-{\curl_{\Gamma}}[dr] & \\
      H^{1/2}(\Gamma) \ar^-{\operatorname{grad}_{\Gamma}}[ur]
      \ar^-{\curl_{\Gamma}}[dr]
      && H^{-1/2}(\Gamma) \\
      & H^{-1/2}_\times(\divv_\Gamma,\Gamma) \ar^-{\divv_\Gamma}[ur] &
    } \]
  \caption{Boundary complex \label{fig:boundaryderham} }
\end{figure}
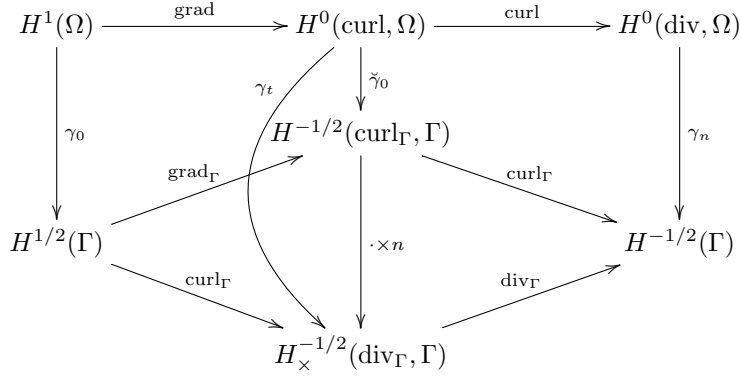

Given a regular simplicial partition $ \calT $ of $\domain$, for each
face $f$ of $\domain$, the set
$\calT_f = \set{ \interior( f \cap \overline{T} ) }{ T\in\calT }$ is a
regular, simplicial triangulation of $f$ (where the interior
$\interior(\ldots)$ is defined with respect to the subspace topology
on the face $f$).  Discretizations of the trace spaces can be defined
as the traces in the sense of \eqref{eq:tracedef} of the finite
element spaces on $\domain$ (see \cite[Section 1.6]{FKDN2015}).
The corresponding diagram for the lowest order conforming FEM spaces
also commutes (Figure \ref{fig:discboundaryderham}).
\begin{figure} [H]
  \[ \xymatrix@C=2cm{
      \So(\calT,\domain)\ar^{\gamma_0|_f}[dd]\ar^-{\operatorname{grad}}[r]
      & \Ne(\calT,\domain) \ar^{\breve\gamma_0|_f}[d] \ar^-{\curl}[r]
      \ar_<<<<<<<<{\gamma_t|_f}@/_1.5cm/[ddd]
      & \RT(\calT,\domain) \ar^{\gamma_n|_f}[dd] \\
      & \Ne(\calT_{f},f) \ar^{\cdot\times n}[dd] \ar^-{\curl_{\Gamma}}[dr] & \\
      \So(\calT_f,f) \ar^-{\operatorname{grad}_{\Gamma}}[ur]
      \ar^-{\curl_{\Gamma}}[dr]
      && \Sz(\calT_f,f) \\
      & \RT(\calT_f,f) \ar^-{\divv_\Gamma}[ur] & } \]
  \caption{Discrete boundary complex \label{fig:discboundaryderham} }
\end{figure}
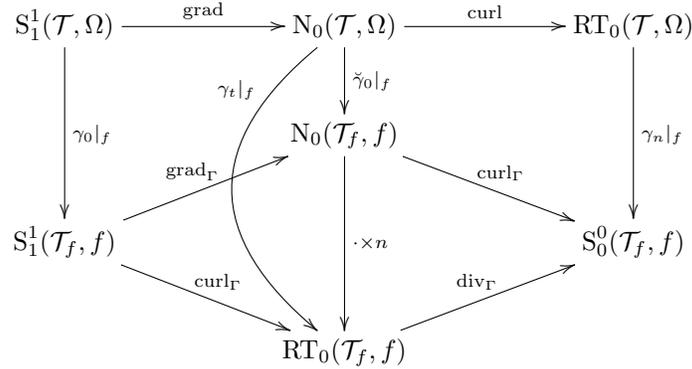

Upon parametrizing each face of $\domain$ by a polygon in $\R^2$, we
can construct NN approximations of the traces on $f$.  We parametrize
each face $f$ by an affine bijection $F_f: D_f \to f$ for some polygon
$D_f\subset\R^2$, which can be partitioned by
$\calT_{D_f} := \set{ F_f^{-1}(T) }{ T \in\calT_f }$.  Functions in
$\So(\calT_f, f)$, $\RT(\calT_f, f)$ and $\Sz(\calT_f, f)$ can be
pulled back to $D_f$.
In particular,
\begin{align*}
  \set{ u\circ F_f }{ u \in \So(\calT_f, f) } = \, \So(\calT_{D_f}, D_f),
  \quad
  \set{ u\circ F_f }{ u \in \Sz(\calT_f, f) } = \, \Sz(\calT_{D_f}, D_f).
\end{align*}
NN emulations of these spaces have already been provided in
Propositions \ref{prop:basisnet} and \ref{prop:CPLbasisnet}.
The spaces $\RT(\calT_f, f)$ and $\RT(\calT_{D_f}, D_f)$ are related
by the Piola transform.  For $J$ denoting the Jacobian of $F_f$,
\begin{align*}
  \set{ \det(J) J^{-1} (u\circ F_f) }{ u \in \RT(\calT_f, f) } = \RT(\calT_{D_f}, D_f).
\end{align*}
Thus, for $u\in\RT(\calT_f, f)$, a network that emulates
$u \circ F_f : D_f \to\R^3$ is given by $\det(J^{-1}) J \Phi $, for a
NN $\Phi \in \calNN(\RT;\calT_{D_f},D_f)$ from Proposition
\ref{prop:basisnet} emulating
$\det(J) J^{-1} (u\circ F_f) \in \RT(\calT_{D_f}, D_f)$.  Here, ReLU
activations imply that the affine transformation $\det(J^{-1}) J$ can
be emulated exactly either by applying this transformation to the
weights and biases of the output layer of $\Phi$, or by concatenating
$\Phi$ with a ReLU NN of depth one.  In both cases, the network size
is increased by at most $Cd^2$ (with $ C>0 $ independent of $ d $ and
$ \calT $),
and the network depth is increased by $0$ respectively $1$.

The shape functions of $\Ne(\calT_f, f)$ equal those of
$\RT(\calT_f, f)$ up to a rotation.  As explained in Section
\ref{sec:ne}, we can use results from Section \ref{sec:rt} for the NN
emulation of the $\Ne(\calT_{D_f}, D_f)$ shape functions.  Therefore,
for $u\in\Ne(\calT_f, f)$, a network that emulates
$u \circ F_f : D_f \to\R^3$ is given by
$\det(J^{-1}) J \Phi $, for a NN
$\Phi \in \calNN(\Ne;\calT_{D_f},D_f)$ from Proposition
\ref{prop:basisnet} emulating
$\det(J) J^{-1} (u\circ F_f) \in \Ne(\calT_{D_f}, D_f)$.

The preceding discussion in this section can be summarized as follows:
\begin{proposition} \label{prop:neuraltrace} Assume given a bounded
  polytopal domain $\domain \subset \R^3$ with boundary
  $\Gamma = \partial\domain$ consisting of a finite union of plane,
  polygonal faces $f$.  For a regular, simplicial partition $\calT$ of
  $\domain$, and for a face $f\subset\Gamma$ of $\domain$, consider
  the regular, simplicial partition
  $\calT_f = \set{ \interior( f \cap \overline{T} ) }{ T\in\calT }$ of
  $f$ with edges $\calE_f = \{ e : e \subset \overline{f} \}$ and
  vertices $\calV_f = \{ v : v \in \overline{f} \}$ (i.e., obtained as
  ``trace'' of $\calT$ on $f\subset \Gamma$).  Let $F_f: D_f \to f$ be
  a bijective affine parametrization of $f$ for some polygonal
  parameter domain $D_f\subset\R^2$ partitioned by
  $\calT_{D_f} := \set{ F_f^{-1}(T) }{ T \in\calT_f }$.  In the
  following, $C$ only depends on the shape regularity constant of the
  simplicial partition $\calT_f$.  Then we have the following.
  \begin{enumerate}
  \item[{(i)}] For all $\blacklozenge\in\{ \So, \Ne, \RT, \Sz \}$
    there exists a NN
    $\Phi^{\blacklozenge,F_f} := \Phi^{\blacklozenge(\calT_f,f),F_f}$
    with ReLU and BiSU activations, which in parallel emulates
    $\{ \theta^\blacklozenge_i \circ F_f \}_{i\in\calI}$ for
    $\calI \in \{ \calV_f, \calE_f, \calE_f, \calT_f \}$, i.e.
    $ \realiz{\Phi^{\blacklozenge,F_f}}\colon D_f \to \R^{|\calI|} $
    satisfies
    \begin{equation*}
      \realiz{\Phi^{\blacklozenge,F_f}}(x)_i
      = \, \theta^\blacklozenge_i \circ F_f(x)
      \quad\text{ for a.e. } x\in D_f
      \text{ and all } i\in\calI.
    \end{equation*}
  \item[{(ii)}] There exists $C>0$ independent of $\calT$ such that
    \begin{align*}
      \depth(\Phi^{\blacklozenge,F_f})
      = &\, \begin{cases} 5 & \text{ if } \blacklozenge \in \{ \So, \Ne, \RT \},
        \\ 3 &  \text{ if } \blacklozenge = \Sz, \end{cases}
               \qquad
               \size(\Phi^{\blacklozenge,F_f}) \le
               C \dim( \blacklozenge(\calT_f,f) ).
    \end{align*}
  \item[{(iii)}] For all $v\in \blacklozenge(\calT_f,f)$, there exists
    a DNN
    $\Phi^{\blacklozenge,v,F_f} :=
    \Phi^{\blacklozenge(\calT_f,f),v,F_f}$ with BiSU and ReLU
    activations, satisfying the same depth and size bounds as
    $\Phi^{\blacklozenge,F_f}$, such that
    $\realiz{ \Phi^{\blacklozenge,v,F_f} } = v \circ F_f$ a.e.\ in
    $D_f$.
    The set
    $\calNN(\blacklozenge;\calT_f,f;F_f) := \{
    \Phi^{\blacklozenge,v,F_f} : v\in \blacklozenge(\calT_f,f) \}$
    together with the linear operation
    $\Phi^{\blacklozenge,v,F_f} \widehat{+} \lambda\Phi^{\blacklozenge,w,F_f} :=
    \Phi^{\blacklozenge,v+\lambda w,F_f}$ for all $v,w\in \blacklozenge(\calT_f,f)$
    and all $\lambda\in\R$ is a vector space.

  \item[{(iv)}] There also exists a DNN
    $\Phi^{CPwL,F_f} := \Phi^{CPwL(\calT_f,f),F_f}$ of depth $C$ and
    size at most $C\dim(\So(\calT_f,f))$, with only ReLU activations,
    such that
    \begin{equation*}
      \realiz{\Phi^{CPwL,F_f}}(x)_i
      = \, \theta^{\So}_i \circ F_f(x)
      \quad\text{ for all } x\in \overline{D_f}
      \text{ and all } i\in\calI.
    \end{equation*}
  \item[(v)] For every $v\in \So(\calT_f,f)$ there exists a DNN
    $\Phi^{CPwL,v,F_f} := \Phi^{CPwL(\calT_f,f),v,F_f}$ with only ReLU
    activations, which satisfies the same depth and size bounds as
    $\Phi^{CPwL,F_f}$, and $\realiz{\Phi^{CPwL,v,F_f}} = v \circ F_f$
    everywhere in $\overline{D_f}$.  The set
    $\calNN(CPwL;\calT_f,f;F_f) := \{ \Phi^{CPwL,v,F_f} : v \in
    \So(\calT_f,f) \}$ together with the linear operation
$$ 
\Phi^{CPwL,v,F_f} \widehat{+} \lambda\Phi^{CPwL,w,F_f} := \Phi^{CPwL,v+\lambda w,F_f} 
\quad \mbox{for all}\;\; v,w\in \So(\calT_f,f) \;\;\mbox{and all}\;\; \lambda\in\R 
$$
is a vector space.
  \end{enumerate}

\end{proposition}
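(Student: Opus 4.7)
The plan is to reduce each part of Proposition \ref{prop:neuraltrace} to its two-dimensional counterpart proved in Sections \ref{sec:approximation} by transporting the whole construction from $f$ to the parameter polygon $D_f\subset\R^2$ via the affine bijection $F_f$, and then to invoke Propositions \ref{prop:basisnet} and \ref{prop:CPLbasisnet} verbatim. Because $F_f$ is affine, the Jacobian $J$ is a constant $3\times 2$ matrix and the regular, simplicial partition $\calT_{D_f}$ of $D_f$ inherits shape-regularity from $\calT_f$ with a constant depending only on $F_f$ (which we may absorb into the constant $C$). I will treat the four spaces case by case.

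For $\blacklozenge\in\{\So,\Sz\}$, the pullback $u\mapsto u\circ F_f$ is a bijection between $\blacklozenge(\calT_f,f)$ and $\blacklozenge(\calT_{D_f},D_f)$, and it maps shape functions to shape functions (up to the canonical reindexing $i\leftrightarrow F_f^{-1}(i)$ of $\calV_f$ or $\calT_f$). Consequently, the basis-nets $\Phi^{\blacklozenge(\calT_{D_f},D_f)}$ from Proposition \ref{prop:basisnet} serve as $\Phi^{\blacklozenge,F_f}$, and the depth/size estimates of (ii) follow immediately from the corresponding bounds in Proposition \ref{prop:basisnet} applied in dimension $d=2$ (where $\mathfrak{s}(\calI)$ is bounded by the shape-regularity constant). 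For the two vector-valued cases $\blacklozenge\in\{\RT,\Ne\}$, I use the Piola relation recalled just before the proposition: if $\Phi\in\calNN(\RT;\calT_{D_f},D_f)$ emulates $\det(J)J^{-1}(u\circ F_f)$, then the vector field $u\circ F_f$ is emulated by $\det(J^{-1})J\,\realiz{\Phi}$. Since $\det(J^{-1})J$ is a \emph{constant} linear map $\R^2\to\R^3$, it can be absorbed by rescaling the weights and biases of the output layer of $\Phi$; this increases the size by at most $Cd^2=\calO(1)$ and leaves the depth unchanged. The same construction, with the two-dimensional rotation that relates $\Ne$ and $\RT$ shape functions (cf.\ \eqref{eq:NeRTtwodim}), handles $\blacklozenge=\Ne$.

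Parts (iii) and (v) then follow from Proposition \ref{prop:sum}: for $v=\sum_{i\in\calI}v_i\theta_i^\blacklozenge\in\blacklozenge(\calT_f,f)$, I define
\begin{equation*}
\Phi^{\blacklozenge,v,F_f} := \sum_{i\in\calI} v_i\, \Phi^{\blacklozenge,F_f}_i,
\end{equation*}
where $\Phi^{\blacklozenge,F_f}_i$ is the $i$-th component of the parallelization built in (i); by linearity of realizations (Proposition \ref{prop:sum}) the realization is $v\circ F_f$ a.e.\ on $D_f$, and the depth/size bounds persist because $\sum_i v_i\Phi_i$ shares the hidden layers of $\Phi^{\blacklozenge,F_f}$. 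The vector space structure and the fact that $\widehat{+}$ is well defined follow exactly as in Proposition \ref{prop:basisnet}: the hidden architecture is fixed and only the output weights depend linearly on the coefficients $\{v_i\}$, so the realization map is a linear isomorphism onto $\{v\circ F_f:v\in\blacklozenge(\calT_f,f)\}$. Part (iv) uses Proposition \ref{prop:CPLbasisnet} in place of \ref{prop:basisnet}, and (v) is its sum version, yielding everywhere (not just a.e.) equality on $\overline{D_f}$ since ReLU realizations are continuous.

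The only slightly delicate point—hence the principal place to be careful rather than a real obstacle—is the bookkeeping of normalizations for the Raviart--Thomas and Nédélec shape functions under Piola transport. Specifically, one must verify that the $\RT$-shape function $\theta_f^\RT$ on a face $f\subset\Gamma$ (as used in the discrete boundary complex of Figure \ref{fig:discboundaryderham}) pulls back to a scalar multiple of the $\RT$-basis element on $\calT_{D_f}$, with the scaling factor being the constant $\det(J^{-1})J$. Once this identification is made explicit, the size and depth bounds reduce to those already established in Section \ref{sec:approximation} (with $d=2$), and all five items of the proposition follow.
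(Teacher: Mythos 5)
Your proposal is correct and follows essentially the same route as the paper, whose ``proof'' is the discussion preceding the proposition: pull back to $D_f$ via $F_f$, use the identities $\{u\circ F_f\}=\So(\calT_{D_f},D_f)$ and $\{u\circ F_f\}=\Sz(\calT_{D_f},D_f)$ for the scalar spaces, the Piola transform with the constant map $\det(J^{-1})J$ absorbed into the output layer for $\RT$, the rotation \eqref{eq:NeRTtwodim} for $\Ne$, and Propositions \ref{prop:basisnet}, \ref{prop:CPLbasisnet} and \ref{prop:sum} for the basis nets, the linear combinations and the vector-space structure. The only cosmetic difference is your remark that shape regularity of $\calT_{D_f}$ may depend on $F_f$; choosing $F_f$ as an isometric parametrization removes even this dependence, consistent with the paper's claim that $C$ depends only on the shape regularity of $\calT_f$.
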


\section{Extensions and conclusions}
\label{sec:ExtConcl}
We conclude this paper by indicating some extensions of the main
results, as well as further possible directions of research.

\subsection{Higher order polynomial spaces}
\label{sec:hofem}
For polynomial degree $k\in\N$ and space dimension $d\ge 2$ denote in
the following by
$\bbP_{k} :={\rm
  span}\set{\prod_{j=1}^dx_j^{\nu_j}}{\sum_{j=1}^d\nu_j\le {k}}$ the
space of $d$ - variate polynomials of total degree at most $k$.
As observed in \cite{JMS-53-159}, networks employing the
``ReLU$^r$''\footnote{Also referred to as ``rectified power unit''
  (RePU).}  activation
\begin{equation*}
  \relu_r(x):=\relu(x)^r=\max\{0,x\}^r
\end{equation*}
for some fixed integer $r\ge 2$, can be used to express multivariate
polynomials in ${\bbP_{k}}$ exactly.  We use here a formulation of
this result from \cite{OSZ19}\footnote{We apply this result here with
  the multiindex set
  $\Lambda:=\set{(\nu_1,\dots,\nu_d)\in\N_0^d}{\sum_{j=1}^d\nu_j\le
    {k}}$, which has cardinality bounded by $(k+1)^d$.  Here, we
  denoted $\N_0 = \{0,1,\ldots\}$.}, extended to vector-valued
polynomials by parallelization:
\begin{proposition}[{{\cite[Proposition
      2.14]{OSZ19}}}] \label{prop:repu} Fix $d$, $\mu\in\N$, $r\in\N$,
  $r\ge 2$ and a polynomial degree $k\in\N$.

  Then there exists a constant $C>0$ independent of $d$, $\mu$ and $k$
  but depending on $r$ such that for any multivariate polynomial
  $w\in [{\bbP_{k}}]^{\mu}$ there is a NN $\Phi_{w}$, employing
  ReLU$^r$ activation,
  such that $ \realiz{\Phi_{w}}(x) = w(x) $, for all $ x\in \R^d
  $ and such that $\size(\Phi_{w})\le C
  {\mu}(k+1)^d$ and $\depth(\Phi_{w})\le Cd\log_2(k+1)$.
\end{proposition}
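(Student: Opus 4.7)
The plan is to construct $\Phi_w$ as a common sub-network that emulates all monomials $x^\nu$, $|\nu|\le k$, in parallel, followed by an affine output layer whose weights encode the polynomial coefficients of $w$. The basic building block is an exact ReLU$^r$ emulation of the bivariate multiplication $(x,y)\mapsto xy$ of constant depth and size. Via polarization $xy = \tfrac14\bigl((x+y)^2 - (x-y)^2\bigr)$, it suffices to emulate the squaring map $x\mapsto x^2$. For $r = 2$ this is immediate from $x^2 = \relu_2(x) + \relu_2(-x)$. For $r\ge 3$ one seeks an identity of the form $x^2 = \sum_j c_j \relu_r(a_j x + b_j)$, with shifts $(a_j,b_j)$ and coefficients $c_j$ chosen by a Vandermonde / finite-difference argument: since each $\relu_r(a_j x + b_j)$ is a polynomial of degree $r$ on the half-line $\{a_j x + b_j > 0\}$ and vanishes on the complementary half-line, finitely many such activations (their number depending on $r$ only) can be linearly combined so that all piecewise contributions cancel except the global monomial $x^2$.

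Given this exact multiplication subnetwork of constant size and depth $C_r$, I would first compute, for each variable $x_j$, the powers $x_j, x_j^2, \ldots, x_j^k$ by a balanced binary tree of multiplications, yielding depth $O(\log_2(k+1))$ and size $O(k)$ per variable. Cross-products are then assembled iteratively: at stage $j\in\{1,\ldots,d-1\}$, each previously produced monomial in $x_1,\ldots,x_j$ is multiplied (in parallel, using independent copies of the multiplication block) by each available power $x_{j+1}^s$, $0\le s\le k$. Each stage adds depth $O(\log_2(k+1))$ (to align with the construction of the new univariate powers) and produces at most $(k+1)^{j+1}$ new products, so the overall monomial subnetwork has size bounded by $\sum_{j=1}^d (k+1)^j \le 2(k+1)^d$ and depth bounded by $O(d\log_2(k+1))$. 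Writing $w(x) = \sum_{|\nu|\le k} c_\nu\, x^\nu$ with $c_\nu\in\mathbb{R}^\mu$, the network $\Phi_w$ is then obtained by attaching an affine output layer carrying these coefficients; the parallelization over the $\mu$ output components (Proposition~\ref{prop:parallel}) adds no depth and contributes $O(\mu(k+1)^d)$ to the size, yielding the claimed bounds.

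The main obstacle is the algebraic Step~1: verifying that for every integer $r\ge 2$ the squaring map $x\mapsto x^2$ admits an \emph{exact} representation as a finite linear combination of shifted ReLU$^r$ activations, valid globally on $\mathbb{R}$. A Vandermonde / divided-difference construction (in the spirit of $B$-spline representations of polynomials) is the natural candidate, but the identity must hold everywhere, so the piecewise-polynomial contributions across all activation breakpoints must cancel identically, not merely match to leading order. Once this identity is in hand, the rest of the proof is routine bookkeeping via the network calculus of Section~\ref{sec:nnoperations}: repeated parallelization of the per-variable power trees, sparse concatenation to chain the $d$ cross-product stages, and a final affine combination in the output layer carrying the coefficients $(c_\nu)_{|\nu|\le k}$.
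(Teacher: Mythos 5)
The paper does not actually prove this proposition: it imports it verbatim from \cite[Proposition 2.14]{OSZ19}, applied with the index set $\Lambda=\set{\nu\in\N_0^d}{\sum_{j=1}^d\nu_j\le k}$ of cardinality at most $(k+1)^d$, and extends it to $\mu$ output components by the parallelization of Proposition \ref{prop:parallel} (this is confirmed in the proof of Proposition \ref{prop:hobasisnet}). Your proposal instead reconstructs a self-contained proof, and the architecture you describe --- exact RePU multiplication via polarization, per-variable power trees of depth $O(\log_2(k+1))$, staged tensorization of cross-products, and an affine output layer carrying the coefficients $(c_\nu)_{|\nu|\le k}\subset\R^\mu$ --- is essentially the construction underlying the cited result. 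Your bookkeeping is consistent with the claimed bounds, with one point worth making explicit: the monomials carried from one stage to the next must pass through RePU-emulated identity maps (ReLU$^r$ networks do not realize the identity for free, but $x\in\Span\{(x+b)^r : b\in\R\}$, see below, so $O_r(1)$ units per scalar per layer suffice); this costs at most $C\sum_{j=1}^{d-1}(k+1)^{j}\log_2(k+1)\le C(k+1)^d$ additional nonzero weights, so the size bound survives.

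The step you flag as the main obstacle --- an exact one-hidden-layer ReLU$^r$ representation of $x\mapsto x^2$ for $r\ge3$ --- is true and closes precisely by the Vandermonde route you name, but the pairing of activations matters. The key is the \emph{global} identity $(x+b)^r=\relu_r(x+b)+(-1)^r\relu_r(-(x+b))$ for every $b\in\R$: on $\{x+b\ge0\}$ only the first term is active and equals $(x+b)^r$, while on $\{x+b<0\}$ only the second is active and $(-1)^r\relu_r(-(x+b))=(-1)^r(-(x+b))^r=(x+b)^r$. Hence every element of $\Span\set{(x+b)^r}{b\in\R}$ is exactly realized by one hidden RePU layer, and this span equals $\bbP_r$ in one variable: for $r+1$ distinct shifts $b_0,\dots,b_r$, the coefficient matrix of $\{(x+b_j)^r\}_{j=0}^r$ in the monomial basis is a Vandermonde matrix times an invertible diagonal of binomial coefficients. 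Solving the resulting linear system for the target $x^2$ gives $x^2=\sum_{j=0}^r c_j(x+b_j)^r$ with $2(r+1)$ activations, a count depending only on $r$. A caution on your heuristic of ``generic'' breakpoints $(a_j,b_j)$: if the knots $-b_j/a_j$ are pairwise distinct, the jump of the $r$-th distributional derivative at each knot forces $c_ja_j^r=0$, so no such representation exists; the cancellation works only because each shift $b_j$ contributes a matched pair $\pm(x+b_j)$ at a common knot. With this identity supplied, your argument is complete and coincides in substance with the proof of the cited reference, of which the paper itself records only the $\mu$-fold parallelization.
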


Combining Proposition \ref{prop:repu} with Proposition
\ref{prop:multbystep-alt} and Lemma
\ref{lem:indicemulation-lowdimalt}, 
by a similar argument as in Lemma \ref{lem:dpwl} we obtain a
generalization of this result to piecewise polynomial functions on
regular, simplicial partitions for all interelement-conformities which
arise from compatibility with the complex \eqref{derham}.
\begin{lemma}[Emulation of piecewise higher order polynomial elements]
  \label{lem:dpwp}
  Let $r\in\N$, $r\ge 2$.  
  For $d,s,\mu,k\in\N$ let
  $\domain\subset\R^d$ be a bounded polytope with boundary
  $\partial\domain$ being a finite union of plane, polytopal faces
  and let $\calT$ be a regular, simplicial partition of $\domain$ with
  $s = \snorm \calT$ elements, $\calT = \{ T_i \}_{i=1,\ldots,s}$.
  Let $u:\domain\to\R^{\mu}$ be a function that for all $i=1,\ldots,s$
  satisfies $u|_{T_i} \in[{\bbP_{k}}]^{\mu}$.

  Then there exists
  a NN $\Phi_u^{PwP}$ employing ReLU, ReLU$^r$ and BiSU activations
  and satisfies $u(x) = \realizc{\Phi_u^{PwP}}(x)$ for all
  $x \in \cup_{i=1}^s T_i$ and $\realizc{\Phi_u^{PwP}}(x) = 0$ for all
  $x\in\R^d\setminus\cup_{i=1}^s T_i$.  Furthermore,
  \begin{align*}
    \depth(\Phi_u^{PwP}) \leq C d\log_2(k+1)\;,\qquad
    \size(\Phi_u^{PwP}) \leq &\, C s \mu (k+1)^d.
  \end{align*}
  Here the constant $C$ is independent of $\calT$, $d$, $s$, $\mu$ 
  and of $k$ but depends on $r$.
\end{lemma}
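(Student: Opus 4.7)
My plan is to closely follow the proof of Lemma \ref{lem:dpwl}, replacing the single-layer affine network that represents $u|_{T_i}$ by a deeper ReLU$^r$ network realizing the unique polynomial extension $w^{(i)}\in[\mathbb{P}_k]^\mu$ of $u|_{T_i}$ to all of $\R^d$. For each $i=1,\ldots,s$, Proposition \ref{prop:repu} (applied componentwise and parallelized via Proposition \ref{prop:parallel}) supplies a network $\Phi_{w^{(i)}}$ with ReLU$^r$ activations such that $\realiz{\Phi_{w^{(i)}}}=w^{(i)}$ on $\R^d$, of depth at most $C_1 d\log_2(k+1)$ and size at most $C_1\mu(k+1)^d$, with $C_1$ depending only on $r$. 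In parallel, Lemma \ref{lem:indicemulation-lowdimalt} (with $N=d+1$, $n=0$, using the half-space description of $T_i$ from \eqref{S0affine}) yields $\Phi_{T_i}^{\mathbbm{1}}$ emulating $\mathbbm{1}_{T_i}$ via BiSU activations, with depth $3$ and size at most $(d+2)(d+1)+2$.

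Setting $\kappa := \max_{i=1,\ldots,s}\sup_{x\in T_i}\norm[\infty]{w^{(i)}(x)}<\infty$ (finite by compactness of each $T_i$), I use the multiplication network $\Phi^{\times}_{\mu,\kappa}$ from Proposition \ref{prop:multbystep-alt} to multiply the polynomial output by the indicator output. Since the two branches entering the parallelization generally have different depths, I prepend an identity net $\Phi^{\Id}_{1,L}$ from Proposition \ref{prop:idnn} to whichever branch is shorter; this adds at most $O(d\log_2(k+1))$ weights and yields equal-depth branches. I then assemble
\begin{equation*}
\Phi^{(i)} := \Phi^{\times}_{\mu,\kappa} \sconc \Parallelc{\Phi_{w^{(i)}},\;\Phi^{\Id}_{1,L}\sconc\Phi_{T_i}^{\mathbbm{1}}},
\qquad
\Phi_u^{PwP} := \sum_{i=1}^{s}\Phi^{(i)},
\end{equation*}
via Propositions \ref{prop:concat}, \ref{prop:parallel} and \ref{prop:sum}.

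Correctness follows exactly as in Lemma \ref{lem:dpwl}: for $x\in T_i$ one has $w^{(i)}(x)=u(x)\in[-\kappa,\kappa]^\mu$ and $\mathbbm{1}_{T_i}(x)=1$, so the $i$-th summand equals $u(x)$ while all other summands vanish because their indicators are $0$; for $x\notin\cup_i T_i$, every indicator is $0$ and the gating property of $\Phi^{\times}_{\mu,\kappa}$ already exploited in Lemma \ref{lem:dpwl} zeroes each summand. The depth bound $\depth(\Phi_u^{PwP})\leq C d\log_2(k+1)$ is immediate from Proposition \ref{prop:sum}. For the size, per-element contributions sum to $O\bigl(s(\mu(k+1)^d+d^2+d\log_2(k+1))\bigr)$; since $d^2\leq(9/8)(k+1)^d$ and $d\log_2(k+1)\leq(k+1)^d$ hold for $d\geq2$, $k\geq1$, the subordinate terms are absorbed into the dominant $\mu(k+1)^d$, giving $\size(\Phi_u^{PwP})\leq Cs\mu(k+1)^d$ with $C$ depending only on $r$. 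The main bookkeeping point to watch is that $\Phi^{\times}_{\mu,\kappa}$ must return exactly $0$ outside $T_i$ even though the globally-defined polynomial extension $w^{(i)}$ may exceed $[-\kappa,\kappa]^\mu$ there; this is precisely the technical feature already used in Lemma \ref{lem:dpwl} and is inherited without modification from the specific construction in Proposition \ref{prop:multbystep-alt}.
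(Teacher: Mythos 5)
Your proposal is correct and follows exactly the route the paper intends: the paper does not spell out a proof of this lemma but asserts it by "combining Proposition \ref{prop:repu} with Proposition \ref{prop:multbystep-alt} and Lemma \ref{lem:indicemulation-lowdimalt}, by a similar argument as in Lemma \ref{lem:dpwl}", which is precisely the construction you carry out, including the correct observation that the gating $\realizc{\Phi^{\times}_{\mu,\kappa}}(x,0)=0$ holds for all $x\in\R^{\mu}$ so that the global polynomial extension exceeding $[-\kappa,\kappa]^{\mu}$ outside $T_i$ is harmless.
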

Our results thus straightforwardly extend to piecewise polynomial
spaces of arbitrarily high order, covering all de Rham compatible
element families on simplicial partitions on polytopes as described in
\cite{FKDN2015}.  Importantly, as in the case of low-order finite
elements, the network size only scales linearly in the number
$s=\snorm\calT$ of simplices of the triangulation $\calT$.  Similarly,
also the results of Section \ref{sec:traces} extend to higher order
polynomials.

We now state the corresponding generalization of Proposition
\ref{prop:basisnet} for three types of higher order finite elements.
For arbitrary polynomial degree $k\in\N$ we recall the Lagrange FE
space
\begin{equation}\label{def:Sok}
  \Sop{k}(\calT, \domain) := \set{v \in H^1(\domain)}{v|_T \in \bbP_{k}, \ \forall T\in \calT } \subset  H^1(\domain)
\end{equation}
from \cite[Section 7.4]{ErnGuermondBookI2021}.  For all $k\in\N_0$ we
recall $ \mathbb{RT}_{k} = (\mathbb{P}_{k})^{d} \oplus x\Span\{ x^\alpha :
\alpha \in \N_0^d, \snorm{\alpha} = k \}$ and the Raviart-Thomas FE
space
\begin{equation} \label{def:RTk} \RTp{k}(\calT,\domain) := \set{v\in
    (L^1(\domain))^d}{v|_T \in \mathbb{RT}_{k} \ \forall T \in \calT
    \text{ and } [v\cdot n_f]_f = 0 \ \forall f \subset \domain}
  \subset H^0(\divv,\domain)
\end{equation}
from \cite[Sections 14.2 and 14.3]{ErnGuermondBookI2021}, and let for
all $k\in\N_0$
\begin{equation} \label{def:Szk} \Szp{k}(\calT,\domain) := \set{v\in
    L^1(\domain)}{v|_T \in \bbP_{k} \ \forall T \in \calT }\subset
  L^2(\domain) .
\end{equation}
For all $k\in\N$ and
$\blacklozenge\in\{ \Sop{k}, \RTp{k-1}, \Szp{k} \}$ and for a suitable
index set $\calI$ we will denote by
$\{ \theta^\blacklozenge_i \}_{i\in\calI}$ any collection of shape
functions of $\blacklozenge(\calT,\domain)$, each of which is
supported on $s(i) \leq \mathfrak{s}({\calI})$ elements of $\calT$.

\begin{proposition}
  \label{prop:hobasisnet}
  Let $\domain\subset\R^d$, $d\geq2$, be a bounded, polytopal domain
  and let $r\in\N$, $r\geq2$ be the power in the ReLU$^r$ activation,
  and let $k\geq 1$ denote the element degree.

  Then we have the following.
  \begin{enumerate}
  \item[{(i)}] For every regular, simplicial triangulation $\calT$ of
    $\domain$, every $k\in\N$ and every
    $\blacklozenge \in \{ \Sop{k}, \RTp{k-1}, \Szp{k} \}$ there exists
    a NN $\Phi^\blacklozenge :=\Phi^{\blacklozenge(\calT,\domain)}$
    with ReLU, ReLU$^r$ and BiSU activations, which in parallel
    emulates the basis functions
    $\{ \theta^\blacklozenge_i \}_{i\in \calI}$, that is
    $ \realiz{\Phi^\blacklozenge}\colon \domain \to \R^{|\calI|} $
    satisfies
    \begin{equation*}
      \realiz{\Phi^\blacklozenge}(x)_i
      = \, \theta^\blacklozenge_i(x)
      \quad\text{ for a.e. } x\in\domain
      \text{ and all } i\in\calI.
    \end{equation*}
  \item[{(ii)}] There exists $C>0$ independent of $d$, $k$ and $\calT$, but
    depending on $r$, such that with $\mu=1$ if
    $\blacklozenge\in\{ \Sop{k}, \Szp{k} \}$ and $\mu=d$ if
    $\blacklozenge = \RTp{k-1}$,
    \begin{align*}
      \depth(\Phi^\blacklozenge)
      \leq &\, C d\log(k+1)
             ,
             \qquad
             \size(\Phi^\blacklozenge) \leq
             C \mu(k+1)^d \sum_{i\in\calI} s(i).
    \end{align*}
  \item[{(iii)}] For
    every FE function
    $v = \sum_{i\in\calI} v_i \theta^\blacklozenge_i
    \in\blacklozenge(\calT,\domain)$, exists a NN
    $\Phi^{\blacklozenge,v} := \Phi^{\blacklozenge(\calT,\domain),v}$
    with ReLU, ReLU$^r$ and BiSU activations, such that for a constant
    $C>0$ independent of $d$, $k$ and $\calT$, but depending on $r$,
    \begin{align*}
      \realiz{\Phi^{\blacklozenge,v}}(x)
      = &\, v(x)
          \quad\text{ for a.e. } x\in\domain
          ,
      \\
      \depth(\Phi^{\blacklozenge,v})
      \leq &\, C d\log(k+1)
             ,
             \qquad
             \size(\Phi^{\blacklozenge,v}) \leq
             C {\mu}(k+1)^d \sum_{i\in\calI} s(i).
    \end{align*}

    The layer dimensions and the lists of activation functions of
    $\Phi^\blacklozenge$ and $\Phi^{\blacklozenge,v}$ are independent
    of $v$ and only depend on $\calT$ through $ \{s(i)\}_{i\in\calI} $
    and $ \snorm{\calI} = \dim(\blacklozenge(\calT,\domain)) $.
  \item[{(iv)}] For each
    $\blacklozenge \in \{ \Sop{k}, \RTp{k-1}, \Szp{k} \}$,
    \begin{equation}\label{eq:hoNNFEMDef}
      \calNN(\blacklozenge;\calT,\domain)
      := \{ \Phi^{\blacklozenge,v} : v \in \blacklozenge(\calT,\domain) \}\;,
    \end{equation}
    together with the linear operation
    $\Phi^{\blacklozenge,v} {\widehat{+}} \lambda\Phi^{\blacklozenge,w} 
    :=
    \Phi^{\blacklozenge,v+\lambda w}$ for all
    $v,w\in \blacklozenge(\calT,\domain)$ and $\lambda\in\R$ is a
    vector space, and the map
    $\realiz{\cdot}: \calNN(\blacklozenge;\calT,\domain) \to \blacklozenge(\calT,\domain)$ 
    is a linear isomorphism.
  \end{enumerate}

\end{proposition}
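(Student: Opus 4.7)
\begin{proofof}{Proposition \ref{prop:hobasisnet}}
The plan is to mirror the proof of Proposition \ref{prop:basisnet}, replacing the element-by-element low-order constructions of Section \ref{sec:nnfem} (Propositions \ref{prop:S1}, \ref{prop:RT}, and \eqref{dnnS0}) with a single invocation of Lemma \ref{lem:dpwp}. The space structure and isomorphism claim in item~(iv) will then follow by the same argument as in Proposition \ref{prop:basisnet}.

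First, I would verify that each basis function $\theta^{\blacklozenge}_i$, $i\in\calI$, is supported on at most $s(i)$ simplices of $\calT$ and that its restriction to any such simplex lies in $[\bbP_k]^\mu$, with $\mu=1$ when $\blacklozenge\in\{\Sop{k},\Szp{k}\}$ and $\mu=d$ when $\blacklozenge=\RTp{k-1}$. For $\Sop{k}$ and $\Szp{k}$ this is the very definition \eqref{def:Sok}, \eqref{def:Szk}; for $\RTp{k-1}$ the needed embedding $\mathbb{RT}_{k-1}\subset [\bbP_k]^d$ follows from the decomposition $\mathbb{RT}_{k-1}=(\bbP_{k-1})^d \oplus x\,\Span\{x^\alpha : |\alpha|=k-1\}$. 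With this in hand, Lemma \ref{lem:dpwp} produces, for each $i\in\calI$, a NN $\Phi_i^{\blacklozenge}$ with ReLU, ReLU$^r$, and BiSU activations satisfying $\realiz{\Phi_i^{\blacklozenge}}=\theta_i^{\blacklozenge}$ a.e.\ in $\domain$, of depth at most $Cd\log_2(k+1)$ and size at most $C\mu(k+1)^d s(i)$, with $C$ depending only on $r$.

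Second, I would assemble $\Phi^{\blacklozenge}$ as the parallelization $\Phi^{\blacklozenge}:=\Parallel{\{\Phi_i^{\blacklozenge}\}_{i\in\calI}}$ via Proposition \ref{prop:parallel}. Since the depth bound from Lemma \ref{lem:dpwp} is uniform in $i$, any depth mismatch across the $\Phi_i^{\blacklozenge}$ is absorbed at constant overhead by pre-composing the shallower subnetworks with identity networks from Proposition \ref{prop:idnn}. The realization formula in (i) and the bounds in (ii) are then immediate. For (iii), I would define $\Phi^{\blacklozenge,v}:=\sum_{i\in\calI}v_i\Phi_i^{\blacklozenge}$ by means of Proposition \ref{prop:sum}, where each $v_i\Phi_i^{\blacklozenge}$ is obtained from $\Phi_i^{\blacklozenge}$ by multiplying the weights and biases of its output layer by the scalar $v_i$ (this is purely affine post-processing, so the hidden layers are unchanged). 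Proposition \ref{prop:sum} delivers the claimed realization, depth, and size bounds. Comparing the definitions in Propositions \ref{prop:parallel} and \ref{prop:sum} shows that the hidden layers of $\Phi^{\blacklozenge,v}$ coincide with those of $\Phi^{\blacklozenge}$; in particular, only the output-layer weights and biases depend on $v$.

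Finally, for item (iv), the set $\calNN(\blacklozenge;\calT,\domain)$ defined in \eqref{eq:hoNNFEMDef} becomes a vector space under the operation $\widehat{+}$ tautologically from its definition, and the map $v\mapsto\Phi^{\blacklozenge,v}$ is a bijection onto $\calNN(\blacklozenge;\calT,\domain)$ by construction, with inverse $\realiz{\cdot}$; this inverse is linear by the definition of $\widehat{+}$, yielding the linear isomorphism. The only step requiring genuine care is the polynomial-degree bookkeeping for $\RTp{k-1}$ described above; once that embedding is established, the argument is a direct lift of the proof of Proposition \ref{prop:basisnet} using Lemma \ref{lem:dpwp} in place of the lowest-order element constructions.
\end{proofof}
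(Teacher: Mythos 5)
Your proposal is correct and follows essentially the same route as the paper's proof: both invoke Lemma \ref{lem:dpwp} to build each shape-function network, then parallelize via Proposition \ref{prop:parallel} and sum via Proposition \ref{prop:sum} exactly as in Proposition \ref{prop:basisnet}, with the vector-space structure in (iv) following identically. Your explicit remark that $\mathbb{RT}_{k-1}\subset[\bbP_k]^d$ (justifying $\mu=d$ and degree $k$ for the Raviart--Thomas case) is a point the paper leaves implicit, and your handling of depth alignment via identity networks matches in spirit the paper's observation that the depth and layer dimensions of the Lemma \ref{lem:dpwp} networks are independent of the particular polynomial being emulated.
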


\begin{proof}
  For all $i\in\calI$ let $\Phi^\blacklozenge_i$ be the NN
  approximation of $\theta^\blacklozenge_i$ from Lemma
  \ref{lem:dpwp}.  Possibly after concatenating each
  $\Phi^\blacklozenge_i$ with $\Phi^{\Id}_{\mu,2}$, which only affects
  the constant $C$ in the bounds on depth and size from Lemma
  \ref{lem:dpwp}, we may assume that
  $\depth( \Phi^\blacklozenge_i ) \geq 3 = \depth(
  \Phi_{T_i}^{\mathbbm{1}} )$.  From the proof of Proposition
  \ref{prop:repu}, where $\Phi_{w}$ is the $\mu$-fold parallelization
  of a ReLU$^r$ network from \cite[Proposition 2.14]{OSZ19} applied
  with $\Lambda = \{ \bsnu\in\N_0^d : \sum_{j=1}^d\nu_j \leq k \}$, we
  see that the depth and the layer dimensions depend on 
  $d$, $\mu$ and $r$, but not on $w$.  The same holds for the network in
  Lemma \ref{lem:dpwp}, by an argument similar to that in the
  proof of Lemma \ref{lem:dpwl}.
  We can therefore define
  $\Phi^{\blacklozenge(\calT,\domain)} := \Parallelc{ \{
    \Phi^\blacklozenge_i \}_{i\in\calI} }$ and the sum
  $\Phi^{\blacklozenge(\calT,\domain),v} := \sum_{i\in\calI}
  v_i\Phi^\blacklozenge_i$ and obtain the linear structure of
  $\calNN(\blacklozenge;\calT,\domain)$ by the same arguments as in
  the proof of Proposition \ref{prop:basisnet}.

  It remains to prove the formula for the realization and to estimate
  the NN depth and size.  Firstly, we observe that indeed
  $ \theta_i^{\blacklozenge}(x) = \realizc{\Phi^{\blacklozenge}}(x)_i
  $
  for all $ x \in \domain \setminus \partial \calT $, where
  $ \partial \calT := \bigcup_{T\in \calT} \partial T$, and
  $ \realizc{\Phi^{\blacklozenge}}(x)_i = 0 $ else.

  Secondly, we apply Lemma \ref{lem:dpwp} with $s = s(i)$ and
  $\mu=1$ if $\blacklozenge\in\{ \Sop{k}, \Szp{k} \}$ and $\mu=d$ if
  $\blacklozenge = \RTp{k-1}$.  We obtain that
  \begin{align*}
    \depth( \Phi^{\blacklozenge} )
    \leq &\, C \mu \log_2(k+1)
           ,
           \qquad
           \size( \Phi^{\blacklozenge} )
           \leq \sum_{i=1}^{s(i)}  \size( \Phi^\blacklozenge_i )
           \leq \sum_{i\in\calI} C s(i) \mu (k+1)^d
           ,
  \end{align*}
  and the same bounds hold for the depth and size of
  $\Phi^{\blacklozenge,v}$.
\end{proof}

Definition \ref{def:basisnet} applies, and also Remark
\ref{rem:basisnet}.

\begin{remark}
\label{rem:hobasisnetrelu}
  ReLU NNs (and thus also ReLU+BiSU NNs) are known to be efficient at
  \emph{approximating} multivariate polynomials, see
  e.g. \cite{LS2017,yarotsky,OSZ19}.
  Thus, also ReLU+BiSU (rather than ReLU+ReLU$^r$+BiSU) networks could
  be employed to extend our results to higher order polynomial spaces,
  however only in an approximate sense.

  The resulting PwL NN realizations may violate the discrete exact
  sequence property however.
\end{remark}

\subsection{Crouzeix-Raviart elements $\CR$}
\label{sec:CRElement}
While this work focused on conformal discretization of functions in
the compatible spaces in \eqref{derham}, the result of Lemma
\ref{lem:dpwl} is more general and includes the non-conformal
Crouzeix-Raviart elements (e.g. \cite[Section
7.5]{ErnGuermondBookI2021}) of lowest order for $ d \ge 2 $.  Due to
the importance and widespread use of the Crouzeix-Raviart elements
(e.g. \cite{CrouzFalk,ChambPock,balci2021crouzeixraviart} and the
references there), we state a NN emulation result of these elements.
For $d \geq 2$ and a polytopal domain $\domain\subset\R^d$, let
$\calT$ be a regular, simplicial triangulation of $\domain$ as in
Section \ref{sec:nnfem}.  The lowest order Crouzeix-Raviart FE space
is defined as
\begin{equation} \label{def:CR0} \CR(\calT,\domain) := \set{v\in
    L^1(\domain)}{v|_T \in \mathbb{P}_1 \ \forall T \in \calT \text{
      and } \int_f[v]_f = 0 \ \forall f\in \calF, f \subset \Omega },
\end{equation}
where $ [v]_f $ denotes the jump of a function across $ f $, that is,
given a unit normal vector $n_f$ to $f$,
$ [v]_f(x_0) = \lim_{\epsilon\searrow 0} (v(x_0+\epsilon n_f)-
v(x_0-\epsilon n_f)) $ for all $ x_0 \in f $.
Analogously to the case of Raviart-Thomas FE, the space
$\CR(\calT,\domain)$ has one degree of freedom per face
$ f \in \calF $.  The corresponding shape functions are, for
$ f \subset \partial \domain $ and thus $s(f) = 1$,
$ \theta^{\CR}_f(x) := d(\frac1d - (1-\frac{|f|(x-a_1)\cdot
  n_f}{d|T_1|})) \mathbbm{1}_{T_1} $, where
$ f\subset \overline{T_1} $, $ T_1 \in \calT $ and $ a_1 $ is the only
vertex of $ T_1 $ that does not belong to $ \overline{f} $.  For
interior faces $ f \subset \domain $ and thus $s(f) = 2$, we construct
$ \theta^{\CR}_f $ by assembling local shape functions of the
neighboring simplices $ T_1, T_2 $ with
$ \overline{f} = \overline{T_1}\cap \overline{T_2}$,
\begin{equation} \label{eq:CRshape} \theta_f^{\CR}(x):=
  \begin{cases}
    d(\frac1d - (1-\frac{|f|(x-a_1)\cdot n_f}{d|T_1|}))  & \text{ if } x \in T_1, \\
    d(\frac1d - (1+\frac{|f|(x-a_2)\cdot n_f}{d|T_2|}))  & \text{ if } x \in T_2, \\
    0 & \text{ if } x \notin \overline{T_1\cup T_2},
  \end{cases}
\end{equation}
where $ a_1,a_2 $ are the the only vertices of $ T_1,T_2 $,
respectively, not belonging to $ \overline{f} $.  The following
proposition allows us to apply Proposition \ref{prop:basisnet},
Definition \ref{def:basisnet} and Remark \ref{rem:basisnet} with
$\blacklozenge = \CR$.
\begin{proposition} \label{prop:CR}
  Given $ f \in \calF $,
  let $ \{ T_i \}_{i=1}^{s(f)} $ be the simplices adjacent to $ f $
  and let
  $ a_i := (\calV \cap \overline{T_i}) \setminus \overline{f} \in
  \R^{d}, i=1,\ldots,s(f) $.  Then, there exist
  $ A_{f,T_i}\in \R^{1\times d},b_{f,T_i} \in \R $,
  $ i=1,\ldots,s(f) $ such that
  \begin{equation}
    \label{eq:CRdef2}
    \Phi_f^{\CR} := \sum_{i = 1}^{s(f)} \Phi^{\times}_{1,\kappa} \sconc \Parallelc{ \Phi_{1,2}^{\Id} \sconc
      \left (\left ( A_{f,T_i},b_{f,T_i},\Id_{\R}\right )\right ) , \Phi_{T_i}^{\mathbbm{1}}}
  \end{equation}
  satisfies $\theta_f^{\CR}(x) = \realiz{\Phi^{\CR}_f}(x)$ for a.e.\
  $ x \in \domain$, for any
  \begin{equation}
    \label{eq:CRkappa2}
    \kappa \geq d-1.
  \end{equation}
  In addition, there exists a constant $C>0$ that is independent of
  $d$ and $\calT$ such that for all $f\in\calF$
  \begin{align*}
    \depth(\Phi_f^{\CR}) = 5
    , \qquad
    \size(\Phi_f^{\CR}) \leq C d^2 s(f) \leq 2 C d^2
    .
  \end{align*}
\end{proposition}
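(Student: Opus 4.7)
The plan is to reduce everything to Lemma \ref{lem:dpwl}, since the definition \eqref{eq:CRdef2} has exactly the structure of \eqref{eq:dpwldef} with output dimension $\mu=1$ and number of pieces $s=s(f)\leq 2$.

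First, I would extract from \eqref{eq:CRshape} the affine representation of $\theta_f^{\CR}$ on each adjacent simplex $T_i$. Expanding the formula gives
\begin{equation*}
\theta_f^{\CR}|_{T_1}(x) = 1-d + \tfrac{|f|}{|T_1|}\,n_f\cdot(x-a_1), \qquad \theta_f^{\CR}|_{T_2}(x) = 1-d - \tfrac{|f|}{|T_2|}\,n_f\cdot(x-a_2),
\end{equation*}
so one can read off
\begin{equation*}
A_{f,T_i} = (-1)^{i-1}\tfrac{|f|}{|T_i|}\,n_f^\top, \qquad b_{f,T_i} = 1-d - (-1)^{i-1}\tfrac{|f|}{|T_i|}\,n_f\cdot a_i,
\end{equation*}
each of which has at most $d+1$ nonzero entries taken together, so the parameter \(m\) in Lemma \ref{lem:dpwl} can be chosen as $m=d+1$.

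Next I would verify the bound \eqref{eq:CRkappa2}. Using that $T_i$ is a $d$-simplex with face $f$ and opposite vertex $a_i$, the height of $T_i$ over $f$ satisfies $h_{T_i}=\mathrm{dist}(a_i,f)=d|T_i|/|f|$, and for every $x\in T_i$ the quantity $\pm(x-a_i)\cdot n_f$ lies in $[0,h_{T_i}]$ when the sign of $n_f$ is taken outward from $a_i$. Consequently $\tfrac{|f|}{|T_i|}(x-a_i)\cdot n_f \in [0,d]$, and $\theta_f^{\CR}(x)\in[1-d,1]$ on $T_i$. Since $d\ge 2$, we obtain the uniform bound $\|A_{f,T_i}x+b_{f,T_i}\|_\infty \le d-1$, so any $\kappa\ge d-1$ is admissible, matching \eqref{eq:CRkappa2}.

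Finally I would invoke Lemma \ref{lem:dpwl} with $\mu=1$, $s=s(f)\le 2$, $m=d+1$, and the value of $\kappa$ just established; the network it produces is exactly \eqref{eq:CRdef2}. The realization statement $\theta_f^{\CR}(x)=\realiz{\Phi_f^{\CR}}(x)$ for a.e.\ $x\in\domain$ follows verbatim from Lemma \ref{lem:dpwl}, which sets the realization to zero outside $\cup_{i=1}^{s(f)} T_i=\mathrm{supp}\,\theta_f^{\CR}$ up to a null set. Lemma \ref{lem:dpwl} further yields $\depth(\Phi_f^{\CR})=5$ and $\size(\Phi_f^{\CR})\le Cs(f)(\mu+m+d^2)\le Cs(f)d^2\le 2Cd^2$.

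There is no real obstacle here; the only point requiring care is the geometric identification $h_{T_i}=d|T_i|/|f|$ that delivers the dimension-only bound $\kappa\ge d-1$, independent of the shape regularity constant of $\calT$.
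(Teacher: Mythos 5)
Your proposal is correct and follows essentially the same route as the paper's own proof, which likewise reads the affine data $A_{f,T_i},b_{f,T_i}$ off \eqref{eq:CRshape} and applies Lemma \ref{lem:dpwl} with $\mu=1$, $m=d+1$, $s=s(f)$. The only difference is that you additionally spell out the verification of \eqref{eq:CRkappa2} via the height identity $h_{T_i}=d|T_i|/|f|$, giving $\theta_f^{\CR}(T_i)\subset[1-d,1]$, a detail the paper leaves implicit; this is a correct and welcome addition.
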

\begin{proof}
  The values of $ A_{f,T_i},b_{f,T_i} $ can be read from
  \eqref{eq:CRshape}.
  Similar to Proposition \ref{prop:RT},
  $ \theta_f^{\CR}(x) = \realiz{\Phi_f^{\CR}}(x) $ for all
  $ x \in \domain \setminus \partial \calT $, where
  $ \partial \calT := \bigcup_{T\in \calT} \partial T$.

  We conclude applying Lemma \ref{lem:dpwl} with $\mu = 1$, $m=d+1$
  and $s = s(f)$.
\end{proof}
The same idea carries over to higher order Crouzeix-Raviart elements
and canonical hybrid elements \cite[Section
7.6]{ErnGuermondBookI2021}, along the lines of Section
\ref{sec:hofem}.
%
\subsection{Domains of general topology}
\label{sec:Topol}
%
In our discussion of the de Rham
complex (see Section \ref{sec:derham})
we assumed throughout that the physical domain $\domain$
is contractible.
This renders the topology of $\domain$ trivial: its
Betti-numbers are $b_0=1$, $b_1=b_2=b_3=0$.  As is well-known, for
polytopal domains $\domain$ with a non-trivial topology (e.g. domains
$\domain\subset \mathbb{R}^3$ with voids) in \eqref{derham} the
cohomology spaces
$$
\begin{array}{rcl rcl}
  \calH_0
  &:=&
       {\rm Ker}\;{\operatorname{grad}} / {\rm Im}\;  i
  &
    \calH_1
  &:=&
       {\rm Ker}\;{\curl}/ {\rm Im} \; \operatorname{grad}
  \\
  \calH_2
  &:=&
       {\rm Ker}\;\divv / {\rm Im}\; \curl
  &
    \calH_3
  &:=&
       L^2(\domain)/{\rm Im}\; \divv
\end{array}
$$
are nontrivial.
Our neural network emulation results are given without topological restrictions
on the bounded polytopal domain $\Omega$. 
Therefore,
the presently proposed DNN emulations of
de Rham compatible FE spaces on simplicial partitions preserve these
cohomology spaces provided that the corresponding discrete homology spaces
$\calH_i(\calT)$ for the FE spaces in $\domain$ are
isomorphic to $\calH_i$. This property has been verified for 
several large classes of FE spaces (see, e.g., \cite{CohomDDR22} and
the references there). 

\subsection{Conclusions}
\label{sec:Concl}
%
The present construction of deep NN emulations of de Rham compatible
Finite Element spaces was given for the lowest order 
Finite Element families on regular, simplicial partitions $\calT$ of
$\domain$.  Generalizing recent work \cite{HLXZ2020}, we provided
exact emulation of continuous piecewise linear functions (``Courant''
Finite Elements) on arbitrary, regular simplicial partitions in any
space dimension by ReLU networks. As shown, for uniformly shape
regular partitions the network size in this construction merely scales
linearly with the number of elements.

As is well known (e.g.\ \cite{FKDN2015} and the reference there) the
presently emulated, lowest order element families are embedded in
hierarchies of higher-order Finite Element families for arbitrary
polynomial order.
We argued that admitting higher order, so-called ReLU$^r$ activations
with $r\in\N$, $r\geq 2$ allows to exactly emulate the higher order
element families from \cite{FKDN2015} along the lines of the present
constructions.

Compatible constructions similar to the ones developed here are also
possible on \emph{affine partitions} $\calT$ (comprising elements that
are affine images of reference elements) which contain other element
shapes, in particular quadrilaterals ($d=2$) and hexahedral elements
($d=3$).  We refer to \cite[Sec. 4 and 6]{FKDN2015} for details on the
shape functions.

The present results, in particular Proposition \ref{prop:neuraltrace},
can be the basis to extend the recently proposed frameworks of
``PiNN'' \cite{RPK_2019} and ``deep Ritz'' \cite{EYuDeepRitz} for DNN
discretization of PDEs to larger classes of PDEs, and to corresponding
boundary integral formulations (see, e.g., \cite{SS11} for such
methods, and \cite{AHS22_989} for a realization of this approach for a
model problem).
While in this paper we mainly concentrated on the de Rham formalism,
our ideas and proofs naturally extend also to compatible
discretizations of more general structures, as occur in the so-called
Finite Element Exterior Calculus (FEEC) (e.g.\ \cite{AFWII} and the
references there).

Similarly, with Lemma \ref{lem:dpwp} other nonconforming FEM
such as Hybridized, High Order (``HHO'') FEM can be emulated with
appropriate functionals which account for element interface unknowns
and reduced interelement conformity, see,
e.g. \cite[Prop. 1.8]{cicuttin2021hybrid}.
\appendix
\section{Proofs}
\label{sec:proofs}

\subsection{Proofs from Section \ref{sec:nndef}}
\label{sec:proofsnndef}

\begin{proofof}{Proposition \ref{prop:multbystep-alt}}
  This proof is in two steps.
  In Step 1, we define a function of $x,y$ that computes the desired output for $d=1$.
  In Step 2, we construct a NN which exactly emulates that function $d$ times and
  estimate its depth and size.

  \textbf{Step 1.}  For $x,y\in\R$ let
  \begin{align*}
    f(x,y) &:= \tfrac12 \left( \relu(x+y) + \relu(-x-y) - \relu(x-y) - \relu(-x+y) \right).
  \end{align*}

  Note that for all $x\in[-1,1]$ and $y\in[0,1]$ such that
  $\snorm{x}\leq y \leq 1$ it holds that
  $f(x,y) = \tfrac12 \relu(x+y) + 0 - 0 - \tfrac12 \relu(-x+y) = x $
  and that for all $x\in[-1,1]$ it holds that
  $f(x,0) = \tfrac12 \relu(x) + \tfrac12 \relu(-x) - \tfrac12
  \relu(-x) - \tfrac12 \relu(x) = 0$.  Hence, $f$ satisfies
  \begin{align*}
    f(x,y) = &\, xy, \text{ for all } x\in [-1,1] \text{ and } y\in\{0,1\},
  \end{align*}
  and thus for all $x\in[-\kappa,\kappa]$ and $y\in\{0,1\}$ it follows
  that
  \begin{align*}
    \kappa f(\tfrac{x}{\kappa},y) = xy.
  \end{align*}

  \textbf{Step 2.}  For $d=1$, let

\begin{align*}
  \Phi^{\times}_{1,\kappa} := \left(
  \left( \begin{pmatrix} \tfrac{1}{\kappa} & 1  \\ -\tfrac{1}{\kappa} & -1 \\ \tfrac{1}{\kappa} & -1 \\ -\tfrac{1}{\kappa} & 1 \end{pmatrix},
                                                                                                                             \begin{pmatrix} 0 \\ 0 \\ 0 \\ 0 \end{pmatrix}, \begin{pmatrix} \relu \\ \relu \\ \relu \\ \relu \end{pmatrix} \right) ,
  \left( \begin{pmatrix} \tfrac \kappa2 & \tfrac \kappa2 & -\tfrac \kappa2 & -\tfrac \kappa2 \end{pmatrix},  0 ,  \Id_\R  \right)
                                                                             \right),
\end{align*}
which satisfies
\begin{align*}
  \realiz{\Phi^{\times}_{1,\kappa}} (x,y) = &\, \kappa f(\tfrac{x}{\kappa},y) \;
                                              \text{ for all } (x,y)\in\R^2,
  \\
  \depth(\Phi^{\times}_{1,\kappa}) = &\, 2 , \qquad
                                       \size(\Phi^{\times}_{1,\kappa}) = \, 8 + 4 = 12
                                             .
\end{align*}

Similarly, with
$u_1 := ( \tfrac1\kappa, -\tfrac1\kappa, \tfrac1\kappa, -\tfrac1\kappa
)^\top$, $u_2 := ( 1, -1, -1, 1)^\top$ and
$u_3 := ( \tfrac\kappa2, \tfrac\kappa2, -\tfrac\kappa2, -\tfrac\kappa2
)$, we define for $d>1$ the following network with layer sizes
$N_0 = d+1$, $N_1 = 4d$ and $N_2 = d$:
\begin{align*}
  \Phi^{\times}_{d,\kappa} := \left(
  \left( \begin{pmatrix} u_1 & & & u_2 \\ & \ddots & & \vdots \\ && u_1 & u_2 \end{pmatrix},
                                                                          \begin{pmatrix} 0 \\ \vdots \\ 0 \end{pmatrix}, \begin{pmatrix} \relu \\ \vdots \\ \relu \end{pmatrix} \right) ,
  \left( \begin{pmatrix} u_3 & & \\ & \ddots & \\ &  & u_3 \end{pmatrix}, \begin{pmatrix} 0 \\ \vdots \\ 0 \end{pmatrix}, \begin{pmatrix} \Id_\R \\ \vdots \\ \Id_\R \end{pmatrix} \right)
  \right),
\end{align*}
which satisfies
\begin{align*}
  \realiz{\Phi^{\times}_{d,\kappa}} (x_1,\ldots,x_d,y) = &\,
                                                           ( \kappa f(\tfrac{x_1}{\kappa},y), \ldots, \kappa f(\tfrac{x_d}{\kappa},y) )^\top \in \R^d,
                                                           \text{ for all } (x,y)\in\R^d\times\R,
  \\
  \depth(\Phi^{\times}_{d,\kappa}) = &\, 2 , \qquad
                                       \size(\Phi^{\times}_{d,\kappa}) = \, 12 d
                                             .
\end{align*}
\end{proofof}

\begin{proofof}{Lemma \ref{lem:indicemulation-lowdimalt}}
  From
$$
1 - \heavi(y) - \heavi(-y) = \begin{cases}
  1 & \text{ if } y=0, \\
  0 & \text{ otherwise},
\end{cases}
\qquad \text{ for all } y\in\R,
$$
it follows that for all $x\in\R^d$
\begin{align*}
  \realiz{\Phi^{\mathbbm{1}}_{\domain}}(x) = &\, \heavi
            \left( \sum_{i=1}^{n} ( 1 - \heavi(A_i x + b_i) - \heavi( - A_i x - b_i) )
                      + \sum_{i=n+1}^N \heavi(A_i x + b_i) - (N-\tfrac14) \right)
  \\
  = &\,
      \begin{cases} 1 & \text{ if } x\in \domain,
        \\
        0 & \text{ otherwise},
      \end{cases}
  \\
  \depth(\Phi^{\mathbbm{1}}_\domain)
  = &\, 3
      ,
      \qquad
      \size(\Phi^{\mathbbm{1}}_\domain)
      \leq \, ((N+n)d+(N+n)) + ((N+n)+1) + 1 = (d+2)(N+n) + 2
.
\end{align*}
\end{proofof}
\subsection{Proofs from Section \ref{sec:nnfem}}
\label{sec:proofsnnfem}
\begin{proofof}{Proposition \ref{prop:RTv2}}
    Below, we prove the result for $f\subset\domain$, i.e. ${s}(f) = 2$.
    The case $f\subset\partial\domain$, i.e. ${s}(f) = 1$, follows analogously.

    Observe that we can write
    $ T_i = \conv(\{a_i,p_1,\ldots,p_{d}\}) $
    where
    $   \{p_1,\ldots,p_{d}\} := \calV\cap\overline{f} $.
    The point values $ \theta_f^{\RT}(p_j)\cdot n_f = 1 $, $ \forall j = 1,\ldots,d $
    are well-defined by continuity of $ \theta_f^{\RT} \cdot n_f$ across $ f $.
    Therefore, we can take $ A_{n_f}^{(i)} \in \R^{1\times d}, b_{n_f}^{(i)} \in \R $, $ i = 1,2 $
    to be the matrices and vectors solving
    \begin{align}\label{RTaffine}
        (A_{n_f}^{(i)}, b_{n_f}^{(i)})
        \begin{pmatrix}
            (a_{i})_1 & (p_1)_1  &  & (p_{d})_1 \\
            \vdots & & \ddots& \\
            (a_{i})_d & (p_1)_d  &  & (p_{d})_d \\
            1 & 1 & \cdots & 1
        \end{pmatrix} = (0,1,\ldots,1)
        ,
    \end{align}
    where  $(0,1,\ldots,1) = (-1)^{i-1} \tfrac{\snorm{f}}{d\snorm{T_i}} (0, (p_1-a_i)\cdot n_f, \ldots, (p_d-a_i)\cdot n_f )$.
    With this choice, since $ (\theta_f^{\RT}(x) \cdot n_f ) \in [0,1]  $ for $ x \in T_1 \cup T_2 \cup f $,
    it holds that $ \theta_f^{\RT}(x)\cdot n_f = \realizc{\Phi^{\RT,\perp}_f}(x)$
    for a.e.\ $ x \in \domain $ and every $ x\in f $.
    On the other hand, the discontinuous tangential component can be assembled element by element,
    as in Proposition \ref{prop:RT}:
    matrices and vectors
    $ (A_{t_j}^{(i)},b_{t_j}^{(i)}) \in \R^{1\times(d+1)} $,
    $ i=1,2 $ and $ j=1,\ldots,d-1 $ which only depend on $ T_i $ and $ t_j $
    can be computed as in \eqref{RTaffine}, but with different right-hand sides,
    namely $(-1)^{i-1} \tfrac{\snorm{f}}{d\snorm{T_i}} (0, (p_1-a_i)\cdot t_j, \ldots, (p_d-a_i)\cdot t_j ) \in \R^{1\times (d+1)}$.

    Finally, we estimate the network depth and size.
    For $d=2$, as in Remark \ref{rmk:EstMinMax} 
    let
    \begin{align*}
        \Phi^{\min}_2 :=&\, \left(
        \left( \begin{pmatrix} -1 & 1 \\ 0 & 1 \\ 0 & -1 \end{pmatrix},
        \begin{pmatrix} 0 \\ 0 \\ 0\end{pmatrix} ,
        \begin{pmatrix} \relu \\ \relu \\ \relu \end{pmatrix} \right) ,
        \left( \begin{pmatrix} -1 & 1 & -1 \end{pmatrix},  0 ,
        \Id_\R \right)
        \right)
        ,
        \\
        \depth(\Phi^{\min}_2) = &\, 2,\qquad
        \size(\Phi^{\min}_2) = \, 7.
    \end{align*}
    In particular, we use that 
    $\depth( \Phi^{\min}_2 ) + 1
    = \depth( \Phi_{T_1}^{\mathbbm{1}} + \Phi_{f}^{\mathbbm{1}} + \Phi_{T_2}^{\mathbbm{1}} ) = 3$,
    i.e. in \eqref{eq:RTdefn} both components in the parallelization have equal depth.
    Also, because the networks for $s(f) = 1$
    have smaller sizes than those for $s(f) = 2$,
    we will only estimate the sizes of the latter.
    In the bound on the size of
    $\Phi_{T_1}^{\mathbbm{1}} + \Phi_{f}^{\mathbbm{1}} + \Phi_{T_2}^{\mathbbm{1}}$
    in \eqref{eq:RTdefn},
    we use for $T_1,T_2$ Lemma \ref{lem:indicemulation-lowdimalt} with $N = d+1$ and $n=0$,
    whereas for $f$ we use Lemma \ref{lem:indicemulation-lowdimalt} with $N = d+1$ and $n=1$.
    The size of the network in \eqref{eq:RTdeft} is estimated using Lemma \ref{lem:dpwl}
    with ${\mu}=1$, $m=d+1$ and $s=2$.
    \begin{align*}
        \depth(\Phi^{\RT,\perp}_f)
        = &\, \depth( \Phi^\times_{1,1} )
        + \depth( \Phi_{T_1}^{\mathbbm{1}} + \Phi_{f}^{\mathbbm{1}} + \Phi_{T_2}^{\mathbbm{1}} )
        = 5
        ,
        \\
        \size(\Phi^{\RT,{\perp}}_f) \leq &\, 2 \size( \Phi^\times_{1,1} ) + 2 \size\left( \Parallelc{\Phi_2^{\min}\sconc \left ( \left (
            \begin{pmatrix} A_{n_f}^{(1)} \\ A_{n_f}^{(2)} \end{pmatrix},
            \begin{pmatrix} b_{n_f}^{(1)} \\ b_{n_f}^{(2)} \end{pmatrix}, \Id_{\R^2} \right ) \right ),
            \Phi_{T_1}^{\mathbbm{1}} + \Phi_{f}^{\mathbbm{1}} + \Phi_{T_2}^{\mathbbm{1}} } \right)
        \\
        \leq &\, 2 \size( \Phi^\times_{1,1} ) + 4 \size( \Phi^{\min}_2 ) + 4 \size\left( \left ( \left (
        \begin{pmatrix} A_{n_f}^{(1)} \\ A_{n_f}^{(2)} \end{pmatrix},
        \begin{pmatrix} b_{n_f}^{(1)} \\ b_{n_f}^{(2)} \end{pmatrix}, \Id_{\R^2} \right ) \right ) \right)
        \\
        &\, + 2 \size( \Phi_{T_1}^{\mathbbm{1}} ) + 2 \size( \Phi_{f}^{\mathbbm{1}} )
        + 2 \size( \Phi_{T_2}^{\mathbbm{1}} )
        \\
        \leq &\, C ( C + C + C d + C d^2 + C d^2 + C d^2 )
        \leq C d^2
        .
    \end{align*}
\end{proofof}

\subsection{Proofs from Section \ref{sec:cpwlrelu}}
\label{sec:proofscpwlrelu}

In this section we give proofs of Theorems  \ref{thm:theorem_min_max_basis_functions} and \ref{thm:relucpwl}.
Our proof strategy is to write a non-convex patch as a suitable union of
convex ones, and thereby reduce the problem to the convex case.

\begin{lemma} \label{lemma:move_point_outwards}
    For $d\in\N$, let $T=\simp{a_{0}}{a_d}$ be a simplex and
    $\delta>0$.
    Define $q:=a_{0} + \delta \sum_{i=1}^d (a_{0}-a_i)$.
    Then $T_\delta:=\simp{q,a_1}{a_d}$ is a simplex and $a_{0}\in T_\delta$.
\end{lemma}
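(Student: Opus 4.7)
The plan is to work with barycentric coordinates relative to $T$. First I would rewrite the definition of $q$ as
\begin{equation*}
q = (1 + d\delta)\, a_0 - \delta \sum_{i=1}^d a_i,
\end{equation*}
so that $q$ has barycentric coordinates $(1+d\delta, -\delta, \ldots, -\delta)$ with respect to the affine basis $a_0, \ldots, a_d$. Since these coordinates sum to $1$, this is a valid affine representation, and the coefficient of $a_0$ equals $1 + d\delta \neq 0$, so $q$ lies outside the affine hull of $\{a_1, \ldots, a_d\}$.

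To show that $T_\delta = \conv(\{q, a_1, \ldots, a_d\})$ is a simplex, I would use the above to argue affine independence: $\{a_1, \ldots, a_d\}$ is affinely independent as a subset of the affinely independent set $\{a_0, \ldots, a_d\}$, and because $q$ is not in their affine hull, adding $q$ preserves affine independence. Hence $\{q, a_1, \ldots, a_d\}$ is a set of $d+1$ affinely independent points in $\R^d$, which is exactly the definition of a $d$-simplex.

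For the second claim, I would solve directly for the barycentric coordinates of $a_0$ with respect to $\{q, a_1, \ldots, a_d\}$. Writing $a_0 = \mu_0 q + \sum_{i=1}^d \mu_i a_i$ and substituting the expression for $q$, the affine independence of $\{a_0, \ldots, a_d\}$ forces
\begin{equation*}
\mu_0 = \frac{1}{1 + d\delta}, \qquad \mu_i = \frac{\delta}{1+d\delta} \quad \text{for } i=1,\ldots,d.
\end{equation*}
All coefficients are strictly positive and they sum to $1$, hence $a_0 \in \conv(\{q, a_1, \ldots, a_d\}) = T_\delta$ in the paper's (open) convex hull sense.

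There is no real obstacle here; the only mild subtlety is that the paper defines $\conv$ as the open convex hull (strict positivity of the weights), so one must verify that the computed $\mu_i$ are strictly positive rather than merely nonnegative, which follows immediately from $\delta > 0$.
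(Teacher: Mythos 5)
Your proof is correct and rests on the same key computation as the paper's: the barycentric coefficients $\mu_0=(1+d\delta)^{-1}$, $\mu_i=\delta(1+d\delta)^{-1}$ of $a_0$ with respect to $\{q,a_1,\ldots,a_d\}$ are exactly the $\alpha_i$ the paper exhibits. The only (inessential) difference is that you establish that $T_\delta$ is a simplex directly via affine independence of $\{q,a_1,\ldots,a_d\}$, whereas the paper deduces it from $a_0\in T_\delta$, which gives $T\subset T_\delta$ and hence nonempty interior; both routes are valid.
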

\begin{proof}
    Without loss of generality, $a_{0} = 0$.
    To show that $T_\delta$ is a simplex,
    it suffices to verify $a_{0} = 0 \in T_\delta$,
    as it then follows that $T\subset T_\delta$, i.e.\
    $T_\delta$ has nonempty interior and is thus a simplex.
    By definition,
    \begin{equation*}
        T_\delta = \left\{ \alpha_{0} \left( \delta \sum_{i=1}^d -a_i \right) +  \sum_{i=1}^d \alpha_i a_i :
        \sum_{i=0}^d \alpha_i = 1 \, \text{ and } \alpha_i > 0 \right\}.
    \end{equation*}
    Therefore, $a_{0} \in T_\delta$ is equivalent to
    \begin{equation*}
        \alpha_{0} \delta \sum_{i=1}^d a_i =  \sum_{i=1}^d \alpha_i a_i,
    \end{equation*}
    which holds if and only if $\alpha_{0} \delta = \alpha_i
    $ for all
    $i=1,\ldots,d$. A viable choice satisfying
    $\sum_{i=0}^d \alpha_i=1$ and $\alpha_i>0$ is $\alpha_0 =
    (1+d\delta)^{-1}$ and $\alpha_i = \delta
    (1+d\delta)^{-1}$ for all $i=1,\ldots,d$, and thus $a_0\in
    T_\delta$.
\end{proof}
\begin{proposition}
    \label{prop:new_patch_from_interior_point}
    Given a simplex $T=\simp{a_{0}}{a_d}$ and a point $p\in T$, let
    \begin{equation*}
        T_i := \conv(\{p,a_{0}, \dots ,a_d\}\setminus \{a_i\} )  \quad \text{ for all } i \in \{0,\dots,d\}.
    \end{equation*}
    Then
    \begin{equation}\label{eq:Ticlos}
        \overline{\underset{i \in \{0,\dots,d\} }{\bigcup} T_i} = \overline{T}
    \end{equation}
    and this is a patch,
    and $\set{T_i}{i\in\{0,\ldots,d\}}$ is a regular simplicial partition of $T$.
\end{proposition}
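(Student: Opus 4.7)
The key idea is to exploit barycentric coordinates with respect to the vertices of $T$. Since $p \in T = \conv(\{a_0,\dots,a_d\})$ (open convex hull in the sense of the paper's definition), we may write $p = \sum_{k=0}^d \lambda_k a_k$ with $\lambda_k > 0$ and $\sum_k \lambda_k = 1$. First I would verify that each $T_i$ is in fact a simplex: if $\{p\} \cup \{a_k\}_{k \neq i}$ were affinely dependent, then $p$ would lie in the affine hull of $\{a_k\}_{k\neq i}$, forcing $\lambda_i = 0$ by uniqueness of the barycentric representation, contradicting $\lambda_i > 0$.

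Next I would derive the central characterization. A point $x = \sum_{k \neq i} \mu_k a_k + \mu_i p$ in $T_i$ (with $\mu_k > 0$, $\sum \mu_k = 1$) has barycentric coordinates $\alpha_k(x)$ with respect to $(a_0,\dots,a_d)$ given by $\alpha_i = \mu_i \lambda_i$ and $\alpha_k = \mu_k + \mu_i \lambda_k$ for $k \neq i$. Solving for the $\mu_k$'s shows that for $x \in \overline{T}$ with barycentric coordinates $\alpha_0,\dots,\alpha_d$,
\begin{equation*}
x \in T_i \;\iff\; \frac{\alpha_i}{\lambda_i} < \frac{\alpha_k}{\lambda_k} \ \text{for all }k\neq i,
\qquad
x \in \overline{T_i} \;\iff\; \frac{\alpha_i}{\lambda_i} \le \frac{\alpha_k}{\lambda_k} \ \text{for all }k.
\end{equation*}

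From this characterization everything follows transparently. Disjointness $T_i \cap T_j = \emptyset$ for $i \ne j$ is immediate since a strict minimum can only be attained at one index. For \eqref{eq:Ticlos}, any $x \in \overline{T}$ has barycentric coordinates $\alpha_k \geq 0$, and the quantity $\alpha_k/\lambda_k$ attains its minimum at some index $i$, so $x \in \overline{T_i}$; conversely $\overline{T_i} \subset \overline{T}$ by construction. Hence $\overline{T} = \bigcup_i \overline{T_i} = \overline{\bigcup_i T_i}$.

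For regularity of the partition I would show that for $i \ne j$,
\begin{equation*}
\overline{T_i}\cap\overline{T_j} = \overline{\conv\bigl(\{p\}\cup\{a_k : k \notin\{i,j\}\}\bigr)},
\end{equation*}
which is a common $(d-1)$-subsimplex of both $T_i$ and $T_j$; this is done by checking that the condition $\alpha_i/\lambda_i = \alpha_j/\lambda_j = \min_k \alpha_k/\lambda_k$ is equivalent to $x$ being a convex combination of $p$ and the $a_k$ with $k \notin\{i,j\}$ (via the explicit change of coordinates above). The analogous statement for any intersection of several $\overline{T_i}$'s follows the same way. Finally, since $p$ is a vertex of every $T_i$, the union is a patch around $p$. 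The main (small) obstacle is bookkeeping in the coordinate identification for the face characterization; the rest is routine once the min-of-ratios description is in place.
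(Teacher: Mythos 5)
Your proposal is correct and follows essentially the same route as the paper: the paper's proof of the covering likewise picks $i\in\argmin_j \gamma_j/\alpha_j$ (the ratio of the barycentric coordinates of the point to those of $p$) and solves for the new coefficients, and it establishes the common-face property by the same barycentric bookkeeping you describe. One small imprecision: your stated equivalence $x\in T_i \iff \alpha_i/\lambda_i<\alpha_k/\lambda_k$ for all $k\neq i$ fails in the ``$\Leftarrow$'' direction when $\alpha_i=0$ (one must additionally require $\alpha_i>0$), but since you only invoke the forward direction and the closed-simplex version, nothing in the argument breaks.
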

\begin{proof}
    Let $p\in T$, i.e.
    \begin{equation*}
        p = \sum_{i=0}^d \alpha_i a_i, \quad 
        \alpha_i > 0 \text{ and } \sum_{i=0}^d \alpha_i = 1.
    \end{equation*}
    First we show that
    $T_0$ is a simplex, which by symmetry implies that
    $T_i$ is a simplex for all
    $i\in\{0,\dots,d\}$.  It suffices to check that $p-a_1\notin
    \operatorname{span}\{a_2-a_1,\dots,a_d-a_1\}$, since then
    $\{p-a_1,a_2-a_1,\dots,a_d-a_1\}$ is a set of linearly
    independent vectors. This is true since
    $\{a_0-a_1,a_2-a_1,\dots,a_d - a_1\}$ 
    are linearly independent vectors,
    $\alpha_1=1-\sum_{i\neq 1}\alpha_i$ and thus
    \begin{equation*}
        p - a_1 = \sum_{i\neq 1} \alpha_i (a_i-a_1),
    \end{equation*}
    with
    $\alpha_0>0$ does not belong to
    $\operatorname{span}\{a_2-a_1,\dots,a_d-a_1\}$.
    Furthermore,
    $\bigcup_{i \in \{0,\dots,d\}} \overline{T_i} \subset\overline{T}$
    follows by the fact that $p\in
    T$ implies $T_i\subset T$ for all $i\in\{0,\dots,d\}$.

    Next we show
    $\bigcup_{i \in \{0,\dots,d\}} \overline{T_i} \supset
    \overline{T}$. Fix $\bar{p} := \sum_{j=0}^d \gamma_j a_j$
    with $\gamma_j\geq 0$ satisfying $\sum_{j=0}^d \gamma_j = 1$,
    i.e.\ $\bar p$ is an arbitrary point in $\overline{T}$. We wish
    to show that $\bar p \in \overline{T}_i$ for some
    $i\in\{0,\dots,d\}$, i.e.
    \begin{equation*}
        \sum_{j=0}^d \gamma_j a_j=
        \sum_{j\neq i}^d \beta_j a_j + \beta_i \sum_{j=0}^d \alpha_j a_j
        \quad
        \text{for some}\quad \beta_j \geq 0,~\sum_{j=0}^d \beta_j =1.
    \end{equation*}
    This is equivalent to
    \begin{equation}
        \label{eq:barp}
        \sum_{j=0}^d (\gamma_j - \beta_i \alpha_j) a_j = \sum_{j\neq i}^d \beta_j a_j.
    \end{equation}
    We now show that there exist $(\beta_j)_{j=0}^d$ for which this holds.  Let
    \begin{equation}\label{eq:iargmin}
        i \in \underset{j \in \{ 0, \dots, d  \}}{\argmin} \frac{\gamma_j}{\alpha_j},
    \end{equation}
    which is well-defined 
    because $\gamma_j\geq0$ and $\alpha_j>0$ for all
    $j\in\{0,\ldots,d\}$.  Since
    $\sum_{j=0}^d \alpha_{j} = 1 = \sum_{j=0}^d \gamma_{j}$, for $i$
    in \eqref{eq:iargmin} it must hold
    $\frac{\gamma_i}{\alpha_i}\leq 1$.  Equation \eqref{eq:barp} holds
    if $\gamma_i - \beta_i \alpha_i = 0$ and
    $\gamma_j - \beta_i \alpha_j = \beta_j$ for all $j\neq i$.  The
    former is satisfied for
    $\beta_i = \frac{\gamma_i}{\alpha_i} \in [0,1]$ and the latter is
    satisfied if $\beta_j = \gamma_j - \beta_i \alpha_j$, which
    implies
    $\beta_j = \alpha_j ( \tfrac{\gamma_j}{\alpha_j} -
    \tfrac{\gamma_i}{\alpha_i} ) \geq 0$ and
    $\beta_j = \alpha_j ( \tfrac{\gamma_j}{\alpha_j} -
    \tfrac{\gamma_i}{\alpha_i} ) \leq \gamma_j \leq 1$.
    It is left to show that $\sum_{j=0}^d\beta_j=1$.
    We have
    \begin{equation*}
        \begin{aligned}
            \sum_{j\neq i}^d \beta_j + \beta_i & = \sum_{j\neq i}^d
            \alpha_j ( \tfrac{\gamma_j}{\alpha_j} -
            \tfrac{\gamma_i}{\alpha_i} ) + \tfrac{\gamma_i}{\alpha_i} =
            \sum_{j\neq i}^d \gamma_j + \tfrac{\gamma_i}{\alpha_i}
            \left( 1 - \sum_{j\neq i}^d \alpha_j \right) = \sum_{j\neq
                i}^d \gamma_j + \tfrac{\gamma_i}{\alpha_i} \alpha_i = 1.
        \end{aligned}
    \end{equation*}
    We found $i\in\{0,\dots,d\}$ and $(\beta_j)_{j=0}^d$ for
    which \eqref{eq:barp} holds. Thus $\bar{p} \in \overline{T_i}$,
    and
    $\bigcup_{i \in \{0,\dots,d\}} \overline{T_i} \supset
    \overline{T}$.

    It is left to show that for all $m\neq n\in\{0,\ldots,d\}$
    the intersection of $\overline{T_m}$ and $\overline{T_n}$ is the
    closure of a sub-simplex of both.  Consider
    \begin{equation*}
        \begin{aligned}
            \overline{T_m} &= \setc{ \sum_{j\neq m}^d \beta_j a_j + \beta_m p }{\sum_{j = 0}^d \beta_j = 1 \text{ and } \beta_{j} \geq 0 }, \\
            \overline{T_n} &= \setc{ \sum_{j\neq n}^d \beta_j a_j +
                \beta_n p }{\sum_{j = 0}^d \beta_j = 1 \text{ and }
                \beta_{j} \geq 0 } .
        \end{aligned}
    \end{equation*}
    Then
    \begin{equation*}
        \overline{T_m} \cap \overline{T_n}
        =
        \setc{ \sum_{j\neq m,n}^d \beta_j a_j + \beta p }{\beta+\sum_{j\neq m,n}^d \beta_j = 1 \text{ and } \beta, \beta_{j}\geq 0},
    \end{equation*}
    which, by definition, is the closure of a sub-simplex of both $\overline{T_m}$ and $\overline{T_n}$.
\end{proof}

For a set $S\subset\R^d$, in the following we call
$x\in S$ a \emph{star point} of $S$ iff for all $y\in S\setminus\{x\}$
holds $\conv(\{x,y\})\subset S$.
\begin{lemma}
    \label{lemma:same_side_hyper_plane_star}
    Let $ p \in\calV\cap\interior\domain $ and let $ T_1, \ldots T_{s(p)} \in \calT $
    be the simplices adjacent to $ p $. For each $j=1,\ldots,s(p)$, we
    denote by $P_{T_j}$ the hyperplane passing through all vertices of
    $T_j$ except $p$. Then, any $x \in \interior \omega(p)$ that is on
    the same side of the hyperplane $P_{T_j}$ as $p$ for all
    $j=1,\ldots,s(p)$ is a star point for the patch $\omega(p)$.
\end{lemma}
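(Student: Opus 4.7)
The plan is to argue by contradiction, exploiting the elementary fact that a line meets a hyperplane either in exactly one point or lies entirely within it. Fix $y\in\omega(p)$ and parameterize the segment by $\gamma(t)=(1-t)x+ty$ for $t\in[0,1]$. Suppose for contradiction that $B:=\{t\in[0,1]:\gamma(t)\notin\omega(p)\}$ is nonempty. Since $\omega(p)$ is closed (as a finite union of closed simplices) and $\gamma$ is continuous, $B$ is relatively open in $[0,1]$; and since $x\in\interior\omega(p)$ and $y\in\omega(p)$, neither $0$ nor $1$ lies in $B$. One can then pick a connected component $(a,b)\subset B$ with $0<a<b\leq 1$, and by construction $\gamma(a),\gamma(b)\in\partial\omega(p)$.

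The key structural input, for which the hypothesis $p\in\interior\domain$ is essential, is that every $(d-1)$-face of $\partial\omega(p)$ is the face opposite to $p$ in some $T_m\in\calT$ and therefore sits on the hyperplane $P_{T_m}$. (Faces of $T_m$ containing $p$ are shared with another simplex of $\omega(p)$ because $p$ is an interior vertex, and so they are internal to the patch.) Moreover, near the relative interior of any such face, the exterior of $\omega(p)$ coincides locally with the open non-$p$-side of $P_{T_m}$. Assuming first that $\gamma(b)$ lies in the relative interior of a unique such face $F_{m'}\subset P_{T_{m'}}$, the condition $\gamma(t)\notin\omega(p)$ for $t$ slightly less than $b$ will force $\gamma(t)$ into the open non-$p$-side of $P_{T_{m'}}$. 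But by hypothesis $x$ lies strictly on the $p$-side of $P_{T_{m'}}$, so the line supporting $\gamma$ is not contained in $P_{T_{m'}}$; it therefore meets $P_{T_{m'}}$ only at $t=b$, and hence $\gamma$ remains on a single definite side of $P_{T_{m'}}$ throughout $[0,b)$. This side must be the $p$-side since $\gamma(0)=x$ lies there, contradicting the previous conclusion.

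The main obstacle will be the non-generic case in which $\gamma(b)$ sits on a lower-dimensional subface of $\partial\omega(p)$, that is, on several of the hyperplanes $P_{T_{m_i}}$ simultaneously; the local description of the exterior of $\omega(p)$ there is a union of wedges rather than a single half-space. I plan to reduce to the generic case by density: approximate $y$ by points $y_n\to y$ in $\omega(p)$ chosen so that each segment $\gamma_n$ meets $\partial\omega(p)$ only in relative interiors of $(d-1)$-faces, apply the contradiction argument above to each $\gamma_n$ to conclude $[x,y_n]\subset\omega(p)$, and then pass to the limit using closedness of $\omega(p)$ to recover $[x,y]\subset\omega(p)$.
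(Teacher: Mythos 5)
Your proof is correct and follows essentially the same route as the paper's: both rest on the structural fact that the $(d-1)$-dimensional part of $\partial\omega(p)$ consists of faces opposite to $p$ (hence lies on the hyperplanes $P_{T_j}$), and both derive the contradiction from the observation that the line through $x$ and the exit point, not being contained in $P_{T_j}$, can cross that hyperplane only once while $x$ sits on the $p$-side. The only notable difference is the treatment of degenerate crossings: the paper sidesteps them by first reducing to targets $q\in\interior\omega(p)$ and tracking the last open element $T$ traversed before exit, whereas you perturb the target $y$ so that the segment meets $\partial\omega(p)$ only in relative interiors of opposite faces and then pass to the limit by closedness --- both are valid.
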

\begin{proof}
    We divide the proof of the claim in three steps:

    \textbf{Step 1.}
    We claim that $\partial \omega(p) \subset \bigcup_{j=1}^{s(p)} P_{T_j}$ for all $ p \in\calV\cap\interior\domain $.
    To prove this, we define a regular partition $ \calT' $ of $ \R^d $ that extends $ \calT $,
    i.e.\ such that $ \calT \subset \calT' $.
    For every point $z$ on $\partial\omega(p)$,
    $z$ is on the boundary of an element $T\subset\omega(p)$
    and of an element $T'\subset\R^d\setminus\omega(p)$, $T,T'\in\calT'$.
    By regularity of $\calT'$,
    $\overline{T}\cap\overline{T'}$ is the closure of a subsimplex $f$
    of both $\overline{T}$ and $\overline{T'}$.
    Because $T$ is a simplex with $p$ as one of its vertices,
    if $z$ is not in $P_T$, then $f$ touches $p$,
    which implies that $T'\subset\omega(p)$ and gives a contradiction.

    \textbf{Step 2.}
        We show that any star point $ x $ of $ \interior \omega(p) $ is a star point of $ \omega(p) $. Given $ q\in \omega(p) $, define a sequence $ \{q_n\}\subseteq \interior\omega(p) $ such that $ q_n \to q $, then for all $ t\in [0,1] $ we obtain
        $ \interior\omega(p) \ni xt + q_n(1-t) \to xt + q(1-t) \in \omega(p) $, as $ \omega(p) $ is closed.

    \textbf{Step 3.}
    Assume 
    that $x\in\interior\omega(p)$ is not a star point of $\omega(p)$.
    By Step 2, there exists $q\in{\interior} \omega(p)$ and $t\in [0,1]$
    such that $tx + (1-t)q \notin {\interior} \omega(p)$.
    Therefore, there exist
    $\underline{t},\bar{t} \in ( 0,1 )$ satisfying
    $\underline{t} < \bar{t}$, and $T\in\calT$, $T\subset\omega(p)$,
    such that, using Step 1,
    $\bar{t} x + (1-\bar{t})q \in P_{T}$
    and $sx + (1-s)q \in T$,
    $\forall s\in (\underline{t} , \bar{t})$.  Since $p$ and $T$ lie on
    the same side of $P_T$,
    $sx + (1-s)q$ lies on the other side of $P_{T}$ than $p$
    for all $s\in (\bar{t}, 1]$.
    For $s=1$,  this implies that $x$ is on the other side of the
    hyperplane
    $P_{T}$ than $p$. Thus if $x\in\interior\omega(p)$ is
    not a star point, it lies on the other side of at least one
    hyperplane
    $P_{T_j}$ than $p$.  Therefore, if a point
    $x\in\interior\omega(p)$ is on the same side of $P_{T_j}$
    as $p$ for all $j=1,\ldots,s(p)$, then $x$ is a star point for
    $\omega (p)$, by contradiction.
\end{proof}

\begin{remark}
    The converse implication of Lemma
    \ref{lemma:same_side_hyper_plane_star} holds as well: the set of all
    star points of the patch $\omega(p)$ coincides with the intersection
    $\bigcap_{j=1}^{s(p)}\overline{H_j}$ of all closed half-spaces
    $\overline{H_j}$, where $H_j\subset\R^d$ is defined as the set of
    all points that lie on the same side of $P_{T_j}$ as $p$.
\end{remark}

In the following,
denote by $ B_\epsilon(p) \subset \R^d $ the ball of radius $ \epsilon > 0 $ centered at $ p \in \R^d$,
with respect to the Euclidean norm.
\begin{lemma}
    \label{lemma:ball_epsilon}
    For all $ p \in\calV\cap\interior\domain$, there exists $\epsilon>0$ such that
    $B_\epsilon(p) \subset \omega(p)$ and such that every
    $x \in B_\epsilon(p)$ is a star point of $\omega(p)$.
\end{lemma}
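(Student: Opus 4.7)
The plan is to reduce this to Lemma \ref{lemma:same_side_hyper_plane_star}, which characterizes star points via a ``same side as $p$'' condition for each of the $s(p)$ hyperplanes $P_{T_j}$. Hence it suffices to pick $\epsilon > 0$ small enough that (a) the ball $B_\epsilon(p)$ is contained in $\omega(p)$, and (b) no point of $B_\epsilon(p)$ crosses any of the hyperplanes $P_{T_j}$.

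For (a), since $p \in \calV \cap \interior\domain$, every $d$-simplex of $\calT$ incident to $p$ is adjacent to $p$, and together they form a full neighborhood of $p$. Thus $p \in \interior\omega(p)$, so there exists $\epsilon_1 > 0$ with $B_{\epsilon_1}(p) \subset \omega(p)$. For (b), observe that for each $j \in \{1,\ldots,s(p)\}$ the hyperplane $P_{T_j}$ is spanned by the vertices of $T_j$ other than $p$; since $T_j$ is a non-degenerate simplex, $p \notin P_{T_j}$, and therefore
\[
\delta_j := \operatorname{dist}(p, P_{T_j}) > 0.
\]

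Now set $\epsilon := \tfrac{1}{2}\min\bigl(\epsilon_1,\,\delta_1,\ldots,\delta_{s(p)}\bigr) > 0$. For any $x \in B_\epsilon(p)$, the segment $\conv(\{p,x\})$ has Euclidean length strictly less than $\delta_j$ for every $j$, so this segment cannot intersect $P_{T_j}$; consequently $x$ lies strictly on the same side of $P_{T_j}$ as $p$. Moreover, $x \in B_{\epsilon_1}(p) \subset \omega(p)$, and since $B_\epsilon(p)$ is open and contained in $\omega(p)$, we actually have $x \in \interior\omega(p)$. The hypotheses of Lemma \ref{lemma:same_side_hyper_plane_star} are thus met, and $x$ is a star point of $\omega(p)$.

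The only mildly subtle point is checking that $B_\epsilon(p) \subset \interior\omega(p)$ rather than just $\omega(p)$, as Lemma \ref{lemma:same_side_hyper_plane_star} is phrased for interior points; this is handled automatically by taking $B_\epsilon(p)$ open and contained in $\omega(p)$. No further obstacles arise.
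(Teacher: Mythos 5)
Your proof is correct and follows essentially the same route as the paper: both reduce the claim to Lemma \ref{lemma:same_side_hyper_plane_star} by showing that a small enough ball around $p$ stays on the same side of every hyperplane $P_{T_j}$ (the paper phrases this as the openness of $\bigcap_{j=1}^{s(p)}H_j$, you phrase it via $\operatorname{dist}(p,P_{T_j})>0$). Your version is in fact slightly more careful than the paper's one-line argument, since you explicitly verify the containment $B_\epsilon(p)\subset\interior\omega(p)$ needed to invoke that lemma.
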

\begin{proof}
    For $j=1,\dots,s(p)$ denote by $H_j\subset\R^d$ the open
    half-space containing all points that lie on the same side of
    $P_{T_j}$ as $p$.  Then $\bigcap_{j=1}^{s(p)}H_j$ is open,
    contains $p$ and is a subset of the set of all star-points of
    $\omega(p)$ by Lemma \ref{lemma:same_side_hyper_plane_star}.
\end{proof}

For interior vertices we obtain the following result.
Recall the definitions of $ \tilde{T}_{ij} $ and $ \tilde{\omega}_j(p) $
given in \eqref{def:ttilde} and \eqref{def:omegatilde}, respectively.
\begin{lemma}
    \label{lemma:tilde_g_for_theorem}
    Given $p \in\calV\cap\interior\domain$, let $ T_1, \ldots T_{s(p)} \in \calT$ be the
    simplices adjacent to $ p $.  For all $j=1,\ldots,s(p)$, let
    $a_0 := p$ and $a_1,\ldots,a_d\in\R^d$ be such that
    $T_j = \simp{a_0}{a_d}$ and let
    $q_j:=p + \delta_j \sum_{i=1}^d (p-a_i)$ for some sufficiently small
    $\delta_j>0$.
    Then $\tilde{\omega}_j(p)$ is convex and
    $\overline{T_j} \subset \tilde{\omega}_j(p) \subset \omega (p)$.
    The sets $\{\tilde{T}_{ij}\}_{i=0,\ldots,d}$ form a regular
    partition of $\tilde{\omega}_j(p)$.
\end{lemma}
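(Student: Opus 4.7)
The plan is to prove the four claims by reducing to the already-established convex case through two auxiliary constructions. Concretely, I will first use Lemma~\ref{lemma:move_point_outwards} to identify an auxiliary simplex whose boundary moves $p$ outward, then invoke Proposition~\ref{prop:new_patch_from_interior_point} to obtain a regular decomposition of that simplex matching the definition \eqref{def:ttilde}, and finally use the star-point structure from Lemmas~\ref{lemma:same_side_hyper_plane_star} and \ref{lemma:ball_epsilon} to place the decomposition inside $\omega(p)$.

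First, apply Lemma~\ref{lemma:move_point_outwards} to the simplex $T_j = \conv(\{p,a_1,\ldots,a_d\})$ with $a_0 = p$ and the given $\delta_j>0$. This yields that the set $S_j := \conv(\{q_j,a_1,\ldots,a_d\})$ is a simplex and that $p \in S_j$. Next, apply Proposition~\ref{prop:new_patch_from_interior_point} to the simplex $S_j$ with the interior point $p$. The proposition produces a regular simplicial partition $\{S_{j,i}\}_{i=0,\ldots,d}$ of $S_j$ such that $S_{j,0} = \conv(\{p,a_1,\ldots,a_d\}) = T_j = \tilde T_{0j}$ and, for $i \in \{1,\ldots,d\}$, $S_{j,i} = \conv(\{p,q_j,a_1,\ldots,a_d\} \setminus \{a_i\}) = \tilde T_{ij}$, so the $\tilde T_{ij}$'s coincide with the sub-simplices delivered by the proposition. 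By \eqref{eq:Ticlos}, we then obtain $\tilde{\omega}_j(p) = \overline{S_j}$, the closure of a simplex, which is convex. This simultaneously establishes claim (1) (convexity), claim (2) (since $\overline{T_j} = \overline{\tilde T_{0j}} \subset \tilde{\omega}_j(p)$), and claim (4) (regular partition of $\tilde{\omega}_j(p)$).

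For the remaining inclusion $\tilde{\omega}_j(p) \subset \omega(p)$, I will invoke Lemma~\ref{lemma:ball_epsilon} to obtain an $\varepsilon>0$ such that every point of $B_\varepsilon(p)$ is a star point of $\omega(p)$, and then choose $\delta_j>0$ small enough that $q_j \in B_\varepsilon(p)$. Hence $q_j$ is a star point of $\omega(p)$. Since each vertex $a_1,\ldots,a_d$ lies in $\overline{T_j}\subset\omega(p)$, for every $z$ in the face $\conv(\{a_1,\ldots,a_d\}) \subset \omega(p)$ the segment $\conv(\{q_j,z\})$ lies in $\omega(p)$ by the star property. Since $\overline{S_j} = \conv(\{q_j\} \cup \overline{\conv(\{a_1,\ldots,a_d\})})$ equals the union of precisely these segments, we conclude $\tilde{\omega}_j(p) = \overline{S_j} \subset \omega(p)$, proving claim (3).

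The main obstacle in this argument is essentially bookkeeping: one must verify that the relabelling of vertices turns Proposition~\ref{prop:new_patch_from_interior_point} applied to $S_j$ into the partition $\{\tilde T_{ij}\}_{i=0}^d$ prescribed by \eqref{def:ttilde}, and one must quantify ``$\delta_j$ small enough'' so that $q_j$ remains inside the star-point neighborhood $B_\varepsilon(p)$ provided by Lemma~\ref{lemma:ball_epsilon}. Both points are routine once the $\varepsilon$ from Lemma~\ref{lemma:ball_epsilon} is fixed, and the choice of $\delta_j$ is allowed to depend on $j$ since the lemma's hypothesis only requires it to be sufficiently small.
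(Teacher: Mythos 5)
Your proof is correct and follows essentially the same route as the paper's: both arguments use Lemma~\ref{lemma:ball_epsilon} to make $q_j$ a star point of $\omega(p)$ (giving $\tilde{\omega}_j(p)\subset\omega(p)$), Lemma~\ref{lemma:move_point_outwards} to identify $\tilde{\omega}_j(p)$ with the closure of the simplex $\conv(\{q_j,a_1,\ldots,a_d\})$ (giving convexity and $\overline{T_j}\subset\tilde{\omega}_j(p)$), and Proposition~\ref{prop:new_patch_from_interior_point} with the relabelling $q_j,a_1,\ldots,a_d,p\mapsto a_0,\ldots,a_d,p$ for the regular partition. The only difference is cosmetic ordering of these three ingredients.
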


\begin{proof}
    For $\epsilon>0$ as in Lemma \ref{lemma:ball_epsilon}, let
    $\delta_j>0$ in the definition of $q_j$ be such that
    $\|q_j-p\|_2 = \frac{\epsilon}{2}$. Then $q_j \in B_\epsilon(p)\subset
    \omega(p)$ is a star point of $\omega(p)$ by Lemma
    \ref{lemma:ball_epsilon}.
    Therefore, $\tilde{T}_{ij}\subset\omega(p)$ for all $i=1,\ldots,d$,
    and thus $\tilde{\omega}_j(p)\subset\omega(p)$.  It also holds that
    $\overline{T_j} = \overline{\tilde{T}}_{0j}
    \subset\tilde{\omega}_j(p)$.  After observing that
    $\interior\tilde{\omega}_j(p) = (T_j)_{\delta_j}$ in the notation of
    Lemma \ref{lemma:move_point_outwards}, Lemma
    \ref{lemma:move_point_outwards} shows that $\tilde{\omega}_j(p)$ is
    a simplex and thus convex. The sets
    $\{\tilde{T}_{ij}\}_{i=0,\ldots,d}$ form a regular partition of
    $\tilde{\omega}_j(p)$ by Proposition
    \ref{prop:new_patch_from_interior_point} ($q_j,a_1,\ldots,a_d,p$ in
    the notation of this proof correspond to $a_0,\ldots,a_d,p$ in the
    notation of the lemma).
\end{proof}

\begin{corollary}
    Let $\tilde{\omega}_j(p)$ be as in
    Lemma \ref{lemma:tilde_g_for_theorem},
    then
    \[ \omega(p) = \bigcup_{j=1}^{s(p)}
    \tilde{\omega}_j(p).  \]
\end{corollary}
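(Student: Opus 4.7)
The statement is essentially a direct consequence of the preceding Lemma \ref{lemma:tilde_g_for_theorem}, so my plan is to verify the two set inclusions separately, each as a one-line argument.

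For the inclusion $\bigcup_{j=1}^{s(p)} \tilde{\omega}_j(p) \subset \omega(p)$, I would simply invoke the statement $\tilde{\omega}_j(p) \subset \omega(p)$ from Lemma \ref{lemma:tilde_g_for_theorem} and take the union over $j = 1,\ldots,s(p)$. No further argument is needed.

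For the reverse inclusion $\omega(p) \subset \bigcup_{j=1}^{s(p)} \tilde{\omega}_j(p)$, I would unfold the definition \eqref{def:patch}, which gives $\omega(p) = \bigcup_{j=1}^{s(p)} \overline{T_j}$, and then apply the containment $\overline{T_j} \subset \tilde{\omega}_j(p)$ from Lemma \ref{lemma:tilde_g_for_theorem} to each summand. Combining these two inclusions yields equality.

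There is no genuine obstacle here; the corollary is effectively a bookkeeping restatement of two inclusions already established. The only point to be mindful of is that the definition of $\tilde{\omega}_j(p)$ in \eqref{def:omegatilde} includes $\overline{\tilde{T}_{0j}} = \overline{T_j}$, which is precisely what makes the inclusion $\overline{T_j} \subset \tilde{\omega}_j(p)$ hold on the nose and allows the cover of $\omega(p)$ by the $\tilde{\omega}_j(p)$ to be exact.
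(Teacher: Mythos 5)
Your argument is correct and is essentially identical to the paper's own proof, which chains the inclusions $\overline{T_j} \subset \tilde{\omega}_j(p) \subset \omega(p)$ from Lemma \ref{lemma:tilde_g_for_theorem} together with $\omega(p) = \bigcup_{j=1}^{s(p)} \overline{T_j}$ to obtain both containments at once. Nothing is missing.
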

\begin{proof}
    Using $\overline{T_j} \subset \tilde{\omega}_j(p) \subset \omega(p)$
    for all $j = 1,\ldots,s(p)$ we have
    \begin{equation*}
        \omega(p) = \bigcup_{j=1}^{s(p)}
        \overline{T_j} \subset \bigcup_{j=1}^{s(p)}
        \tilde{\omega}_j(p) \subset \omega(p).
    \end{equation*}
\end{proof}
\begin{corollary}
    \label{coroll:subset_conex}
    In the notation of Lemma \ref{lemma:tilde_g_for_theorem},
    for $ p \in\calV \cap\interior\domain $ and $j=1,\ldots,s(p)$, let
    $\tilde{\theta}_{p,j}^{\So} \in C^0(\domain)$ be the hat function on $ \tilde{\omega}_j(p) $
    defined in \eqref{eq:tilde_basis_functions_def}.
    For all $ p \in\calV\cap\interior\domain $ and $j=1,\ldots,s(p)$,
    these functions can be written as
    \begin{equation*}
        \tilde{\theta}_{p,j}^{\So}(x)
        = \max\left \{0, \min_{i = 0,\ldots,d} \tilde{A}_p^{(i,j)} x + \tilde{b}_p^{(i,j)}  \right \},
        \quad x\in\domain,
    \end{equation*}
    where each $x\mapsto\tilde{A}_p^{(i,j)} x + \tilde{b}_p^{(i,j)}$ is
    a globally linear function fulfilling
    $(\tilde{A}_p^{(i,j)} x + \tilde{b}_p^{(i,j)})
    |_{\tilde{T}_{ij}} =
    \tilde{\theta}_{p,j}^{\So}|_{\tilde{T}_{ij}}$.

\end{corollary}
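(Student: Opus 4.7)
The plan is to reduce Corollary \ref{coroll:subset_conex} to the convex-patch formula \eqref{eq:thetap} from \cite[Lemma 3.1]{HLXZ2020}, applied to the smaller, convex patch $\tilde{\omega}_j(p)$ instead of $\omega(p)$. By Lemma \ref{lemma:tilde_g_for_theorem}, $\tilde{\omega}_j(p)$ is convex (indeed, a single simplex) and is regularly partitioned by the $d+1$ sub-simplices $\{\tilde T_{ij}\}_{i=0,\ldots,d}$, each of which contains $p$ as a vertex. Moreover, by the very definition \eqref{eq:tilde_basis_functions_def}, $\tilde{\theta}_{p,j}^{\So}$ is the nodal hat function at $p$ on this convex patch, extended by zero outside $\tilde{\omega}_j(p)$.

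Accordingly, I would define $\tilde{A}_p^{(i,j)}\in\R^{1\times d}$ and $\tilde{b}_p^{(i,j)}\in\R$, for $i=0,\ldots,d$, as the unique coefficients of the globally linear function on $\R^d$ that agrees with $\tilde{\theta}_{p,j}^{\So}|_{\tilde T_{ij}}$ (this is well-defined because $\tilde T_{ij}$ is a non-degenerate simplex). By construction, $\tilde{A}_p^{(i,j)}p+\tilde{b}_p^{(i,j)}=1$ for every $i$, and $\tilde{A}_p^{(i,j)}q+\tilde{b}_p^{(i,j)}=0$ for every vertex $q$ of $\tilde T_{ij}$ different from $p$.

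The substantive step is to verify
\begin{equation*}
  \tilde{\theta}_{p,j}^{\So}(x)
  = \max\left\{0,\, \min_{i=0,\ldots,d}\tilde{A}_p^{(i,j)}x+\tilde{b}_p^{(i,j)}\right\}
  \qquad\text{for all }x\in\domain.
\end{equation*}
For $x\in\tilde{\omega}_j(p)$, this is exactly the content of \eqref{eq:thetap} applied to the convex patch $\tilde{\omega}_j(p)$ and its $d+1$ adjacent simplices $\tilde T_{ij}$: on each $\tilde T_{ij}$ the minimum is attained by the $i$-th affine piece and takes values in $[0,1]$, so the outer $\max\{0,\cdot\}$ is inactive. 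For $x\notin\tilde{\omega}_j(p)$, the claim is that $\min_i \tilde{A}_p^{(i,j)}x+\tilde{b}_p^{(i,j)}\leq 0$, so the outer maximum collapses to $0=\tilde{\theta}_{p,j}^{\So}(x)$. The main (and only non-routine) point is this last inequality; I would prove it using convexity of $\tilde{\omega}_j(p)$: since $p$ lies in the interior of $\tilde{\omega}_j(p)$ and $x$ lies outside, the segment from $p$ to $x$ exits the convex set through some face $F$ of some $\tilde T_{i_0 j}$ that does not contain $p$. The affine function $\tilde{A}_p^{(i_0,j)}x+\tilde{b}_p^{(i_0,j)}$ equals $1$ at $p$ and vanishes on the hyperplane spanned by $F$, hence it is strictly negative beyond $F$, which gives the required sign.

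The only real obstacle is this sign argument, and it is entirely classical; no further estimates are needed, and existence of $\delta_j>0$ placing $q_j$ in the star region of $p$ is already furnished by Lemma \ref{lemma:ball_epsilon} via Lemma \ref{lemma:tilde_g_for_theorem}. Thus the proof reduces to the citation of \eqref{eq:thetap} on the convex patches $\tilde{\omega}_j(p)$ plus the short convexity argument above.
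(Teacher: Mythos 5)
Your proposal is correct and follows essentially the same route as the paper: the paper's proof of Corollary \ref{coroll:subset_conex} consists precisely of invoking Proposition \ref{prop:HLXZ2020basis} (i.e.\ formula \eqref{eq:thetap} from \cite[Lemma 3.1]{HLXZ2020}) on the convex patch $\tilde{\omega}_j(p)$, whose convexity and regular partition by $\{\tilde T_{ij}\}_{i=0,\ldots,d}$ are supplied by Lemma \ref{lemma:tilde_g_for_theorem}. The only difference is that you spell out the sign argument for $x\notin\tilde{\omega}_j(p)$, which the paper leaves implicit in the citation; that argument is sound.
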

\begin{proof}
    Since $\tilde{\omega}_j(p)$ is a convex patch, the statement follows
    by Prop.~\ref{prop:HLXZ2020basis}
    (which corresponds to \cite[Theorem 3.1]{HLXZ2020}).
    The function $x\mapsto\tilde{A}_p^{(i,j)} x + \tilde{b}_p^{(i,j)}$ in
    our notation corresponds to $g_k$ 
    in the notation of \cite{HLXZ2020}, and
    $\tilde{\theta}_{p,j}^{\So}$ corresponds to $\phi_i$.
\end{proof}
\begin{lemma}
    \label{lemma:lin_function_sm_th_or}
    For all $p\in\calV$, $j,k=1,\ldots,s(p)$ and
    $i=0,\ldots,d$, let $x\mapsto\tilde{A}_p^{(i,j)} x +
    \tilde{b}_p^{(i,j)}$ be as defined in Corollary
    \ref{coroll:subset_conex} and let $x\mapsto A_p^{(k)} x +
    b_p^{(k)}$ be the function defined by $(A_p^{(k)} x + b_p^{(k)})
    |_{T_k} = \theta_p^{\So} |_{T_k}$.
    Then,
    \begin{equation*}
        0 \leq
        \tilde{A}_p^{(i,j)} x + \tilde{b}_p^{(i,j)}  \leq A_p^{(k)} x + b_p^{(k)},
        \quad \forall x \in T_{k} \cap \tilde{T}_{ij},
    \end{equation*}
    for all $j,k = 1,\ldots,s(p)$ and $i=0,\ldots,d$.
\end{lemma}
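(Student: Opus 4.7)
The plan is to reduce the claimed inequality between the affine extensions to a pointwise inequality between the two hat functions themselves, and then to prove the latter by a star-shape argument emanating from $p$. For every $x\in T_k\cap \tilde T_{ij}$ the defining property of the coefficients gives $\tilde A_p^{(i,j)}x+\tilde b_p^{(i,j)}=\tilde\theta_{p,j}^{\So}(x)$ (since $x\in\tilde T_{ij}$) and $A_p^{(k)}x+b_p^{(k)}=\theta_p^{\So}(x)$ (since $x\in T_k$), so the double inequality reduces to showing
\begin{equation*}
0\le\tilde\theta_{p,j}^{\So}(x)\le\theta_p^{\So}(x)
\qquad\text{for all }x\in T_k\cap \tilde T_{ij}.
\end{equation*}
The lower bound is immediate: on the simplex $\tilde T_{ij}$ the function $\tilde\theta_{p,j}^{\So}$ is affine with nodal values $1$ at $p$ and $0$ at every other vertex (by \eqref{eq:tilde_basis_functions_def}), hence lies in $[0,1]$ on $\tilde T_{ij}$ by the maximum principle for affine functions on a simplex.

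For the upper bound I will prove the stronger global inequality $\tilde\theta_{p,j}^{\So}\le \theta_p^{\So}$ on $\domain$. Outside $\tilde\omega_j(p)$ this is trivial, since $\tilde\theta_{p,j}^{\So}\equiv 0$ while $\theta_p^{\So}\ge 0$. For $x\in\tilde\omega_j(p)$, Lemma \ref{lemma:tilde_g_for_theorem} tells us that $\tilde\omega_j(p)$ is a simplex containing $p$ in its interior, so I write $x=(1-t)p+t\bar x$ for some $t\in[0,1]$ and $\bar x\in\partial \tilde\omega_j(p)$. By Proposition \ref{prop:new_patch_from_interior_point} the sub-simplices $\{\tilde T_{i'j}\}_{i'=0,\ldots,d}$ are precisely the cones from $p$ over the $d+1$ faces of $\tilde\omega_j(p)$ opposite to $p$, so the entire segment $[p,\bar x]$ lies in whichever sub-simplex $\tilde T_{i'j}$ contains $\bar x$ in its ``base face''; affineness of $\tilde\theta_{p,j}^{\So}$ on $\tilde T_{i'j}$ together with $\tilde\theta_{p,j}^{\So}(\bar x)=0$ then gives $\tilde\theta_{p,j}^{\So}(x)=1-t$. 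Since $\tilde\omega_j(p)\subseteq\omega(p)$, $\bar x$ belongs to some $\overline{T_{k'}}$; convexity of $\overline{T_{k'}}$ together with $p,\bar x\in\overline{T_{k'}}$ implies $[p,\bar x]\subseteq\overline{T_{k'}}$, so $\theta_p^{\So}$ is affine on that segment and
\begin{equation*}
\theta_p^{\So}(x)=(1-t)\,\theta_p^{\So}(p)+t\,\theta_p^{\So}(\bar x)=1-t+t\,\theta_p^{\So}(\bar x)\ge 1-t=\tilde\theta_{p,j}^{\So}(x),
\end{equation*}
where I used $\theta_p^{\So}(\bar x)\ge 0$. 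Combining the two bounds will finish the argument.

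The main step to justify carefully is that every ray $[p,\bar x]$ from $p$ to $\partial\tilde\omega_j(p)$ is contained in a single sub-simplex $\tilde T_{i'j}$; this is however a direct consequence of the cone-type subdivision in Proposition \ref{prop:new_patch_from_interior_point}, so it is not a true obstacle. A minor bookkeeping issue is that Lemma \ref{lemma:tilde_g_for_theorem} was stated for interior vertices; boundary vertices $p\in\calV\cap\partial\domain$ are handled identically once $\delta_j$ is chosen small enough that $p$ remains in the interior of the auxiliary simplex $\tilde\omega_j(p)$, since the star/cone geometry underpinning the argument is insensitive to whether $p$ touches $\partial\domain$.
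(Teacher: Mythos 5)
Your argument for interior vertices is correct and is essentially the paper's own proof: both rest on the ray from $p$ through $x$, the star-point/cone structure of the subdivisions from Proposition \ref{prop:new_patch_from_interior_point}, the containment $\tilde{\omega}_j(p)\subseteq\omega(p)$, and linear interpolation between the value $1$ at $p$ and $0$ at the exit point of the ray from $\tilde{\omega}_j(p)$. Your variant — comparing $\theta_p^{\So}$ along the segment $[p,\bar x]\subseteq\overline{T_{k'}}$ and invoking only $\theta_p^{\So}(\bar x)\ge 0$ — is if anything slightly cleaner than the paper's direct comparison of the two globally affine functions at the exit points $y_1\in\partial\tilde{\omega}_j(p)$ and $y_2\in\partial\omega(p)$.

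The one place your write-up misses the mark is the boundary-vertex case. The obstruction there is not that $p$ could fail to lie in $\interior\tilde{\omega}_j(p)$ — that holds automatically, for every $\delta_j>0$, by Lemma \ref{lemma:move_point_outwards}. The problem is that for $p\in\calV\cap\partial\domain$ the auxiliary point $q_j=p+\delta_j\sum_{i}(p-a_i)$ can lie outside $\omega(p)$ (indeed outside $\overline{\domain}$, e.g.\ at a convex corner), no matter how small $\delta_j$ is chosen; then $\tilde{\omega}_j(p)\not\subseteq\omega(p)$, and both your step ``$\bar x$ belongs to some $\overline{T_{k'}}$'' and the claimed global inequality $\tilde{\theta}_{p,j}^{\So}\le\theta_p^{\So}$ on $\domain$ lose their justification. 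The paper repairs this by first extending $\calT$ to a regular simplicial partition of all of $\R^d$, running the interior-vertex argument in the extended mesh (where the hat function of $p$ extends and dominates as before), and then restricting to the elements $T_k\in\calT$; you should do the same rather than shrink $\delta_j$.
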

\begin{proof}
    First consider
    $p\in\calV\cap\interior\domain$, such that
    $p\in\interior\omega(p)$.  For all
    $j,k\in\{1,\ldots,s(p)\}$ and
    $i\in\{0,\ldots,d\}$, we first note that $\tilde{A}_p^{(i,j)} p +
    \tilde{b}_p^{(i,j)} = A_p^{(k)} p + b_p^{(k)} =
    1$ as well as $\tilde{A}_p^{(i,j)} y + \tilde{b}_p^{(i,j)} =
    0$ for all $y$ on the face 
    of $\tilde{T}_{ij}$ opposite to $p$.
    Similarly, $A_p^{(k)} y + b_p^{(k)} = 0$ for all
    $y$ on the face of $T_k$ opposite to $p$.
    Next, let $x\in T_{k} \cap
    \tilde{T}_{ij}$ in case this set is nonempty.  Let
    $L$ be the halfline starting in $p$ through
    $x$.  Note that $p\in\interior\tilde{\omega}_j(p) \subset
    \interior\omega(p)$ is a star point of both
    $\tilde{\omega}_j(p)$ and
    $\omega(p)$.  It follows from $\tilde{\omega}_j(p) \subset \omega
    (p)$ that the intersection point $y_1 \in
    L\cap\partial\tilde{\omega}_j(p)$ is closer to
    $p$ than (or equal to) the intersection point $y_2 \in
    L\cap\partial\omega(p)$.  Because $y\mapsto\tilde{A}_p^{(i,j)} y +
    \tilde{b}_p^{(i,j)}$ linearly interpolates between the value
    $1$ in $p$ and $0$ in $y_1$, and $y\mapsto A_p^{(k)} y +
    b_p^{(k)}$ linearly interpolates between $1$ in $p$ and
    $0$ in $y_2$, it follows that $\tilde{A}_p^{(i,j)} y +
    \tilde{b}_p^{(i,j)} \leq A_p^{(k)} y +
    b_p^{(k)}$ for all points $y$ between $p$ and
    $y_1$, which includes
    $x$.  Finally, the first inequality in the lemma follows from
    Corollary \ref{coroll:subset_conex}.

    For
    $p\in\calV\cap\partial\domain$, we can apply the argument above
    after extending
    $\calT$ to a regular, simplicial partition of all of
    $\R^d$, of which only the elements touching $p$ are relevant.
\end{proof}

\begin{proofof}{Theorem \ref{thm:theorem_min_max_basis_functions}}
    For all $j=1,\ldots,s(p)$, applying Lemma
    \ref{lemma:lin_function_sm_th_or} for all $i=0,\ldots,d$ and all
    $k=1,\ldots,s(p)$ shows that
    $0 \leq \tilde{\theta}_{p,j}^{\So}(x) \leq \theta_{p}^{\So}(x)$ for
    all $x\in\tilde{\omega}_j(p)$.
    Together with
    $\tilde{\theta}_{p,j}^{\So}(x) = 0 $ for all
    $x\in\domain\backslash\tilde{\omega}_j(p)$, this shows that
    $0 \leq \tilde{\theta}_{p,j}^{\So}(x) \leq \theta_{p}^{\So}(x)$ for
    all $x\in\domain$.  To finish the proof, recall that for all
    $j=1,\ldots,s(p)$ and $x\in T_j$
    $$
    \theta_p^{\So}(x) = A_p^{(j)} x + b_p^{(j)} = \tilde{A}_p^{(0,j)} x +
    \tilde{b}_p^{(0,j)} = \tilde{\theta}_{p,j}^{\So}(x) .
    $$
    The first and the last equality hold by definition, and the second
    holds because both functions are linear and equal the value $1$ in $p$
    and $0$ in the other vertices of $T_j$.
\end{proofof}

  \begin{proofof}{Theorem \ref{thm:relucpwl}}
    Because
    $\tilde{\omega}_j(p) = \cup_{i=0}^d \overline{\tilde{T}}_{ij}$ is
    a regular partition of the convex set $\tilde{\omega}_j(p)$ by
    Lemma \ref{lemma:tilde_g_for_theorem}, we can apply Proposition
    \ref{prop:HLXZ2020basis} and it follows that for all
    $j=1,\ldots,s(p)$
    \begin{align*}
        \depth(\tilde{\Phi}^{CPwL}_{p,j} ) \leq \, 5 + \log_2(d+1)
        , \quad
        \size(\tilde{\Phi}^{CPwL}_{p,j} ) \leq \, C d(d+1) \leq C d^2
    \end{align*}
    and that all NNs $\{ \tilde{\Phi}^{CPwL}_{p,j} \}_{j=1}^{s(p)}$ have equal depth,
    see Proposition \ref{prop:HLXZ2020basis}.
    The fact that $\realiz{\Phi_p^{CPwL} }(x) = \theta^{\So}_p(x)$ for
    all $x\in\domain$ follows from Theorem
    \ref{thm:theorem_min_max_basis_functions}, and the network depth
    and size are bounded as follows:
    \begin{align*}
        \depth(\Phi_p^{CPwL} ) = &\, \depth( \Phi^{\max}_{s(p)} ) + \depth( \tilde{\Phi}^{CPwL}_{p,1} )
        \leq 2 + \log_2(s(p)) + 5 + \log_2(d+1),
        \\
        \size(\Phi_p^{CPwL} ) \leq &\, 2 \size( \Phi^{\max}_{s(p)} )
        + 2 \sum_{j=1}^{s(p)} \size( \tilde{\Phi}^{CPwL}_{p,j} )
        \leq C s(p) + s(p) C d^2
        \leq C d^2 s(p)
        .
    \end{align*}

\end{proofof}

\subsection*{Acknowledgement} \label{sec:Ackn}
ChS acknowledges stimulating discussions at the workshop ``Deep
learning and partial differential equations MDLW03 15 November 2021 to
19 November 2021'' at the Isaac Newton Institute, Cambridge, UK,
during the program ``Mathematics of deep learning MDL 1 July 2021 to
17 December 2021''. Excellent online conferencing and discussion
facilitation by the INI and sabbatical leave from ETH Z\"urich during
the autumn term 2021 are warmly acknowledged.

{\small
\bibliography{bibliography}

\begin{thebibliography}{10}

\bibitem{AlonsoValli1999}
A.~Alonso and A.~Valli.
\newblock An optimal domain decomposition preconditioner for low-frequency
  time-harmonic {M}axwell equations.
\newblock {\em Math. Comp.}, 68(226):607--631, 1999.

\bibitem{ABDG1998}
C.~Amrouche, C.~Bernardi, M.~Dauge, and V.~Girault.
\newblock Vector potentials in three-dimensional non-smooth domains.
\newblock {\em Math. Methods Appl. Sci.}, 21(9):823--864, 1998.

\bibitem{AFWII}
D.~N. Arnold, R.~S. Falk, and R.~Winther.
\newblock Differential complexes and stability of finite element methods. {II}.
  {T}he elasticity complex.
\newblock In {\em Compatible spatial discretizations}, volume 142 of {\em IMA
  Vol. Math. Appl.}, pages 47--67. Springer, New York, 2006.

\bibitem{ABMM2016}
R.~Arora, A.~Basu, P.~Mianjy, and A.~Mukherjee.
\newblock Understanding deep neural networks with rectified linear units.
\newblock In {\em International Conference on Learning Representations}, 2018.
\newblock arXiv: 1611.01491.

\bibitem{AHS22_989}
R.~Aylwin, F.~Henr{\'\i}quez, and C.~Schwab.
\newblock {ReLU} neural network {Galerkin BEM}.
\newblock {\em Journal of Scientific Computing}, 95(2):41, 2023.

\bibitem{balci2021crouzeixraviart}
A.~K. Balci, C.~Ortner, and J.~Storn.
\newblock Crouzeix-{R}aviart finite element method for non-autonomous
  variational problems with {L}avrentiev gap.
\newblock {\em Numer. Math.}, 151(4):779--805, 2022.

\bibitem{MR1836612}
J.~M. Ball.
\newblock Singularities and computation of minimizers for variational problems.
\newblock In {\em Foundations of computational mathematics ({O}xford, 1999)},
  volume 284 of {\em London Math. Soc. Lecture Note Ser.}, pages 1--20.
  Cambridge Univ. Press, Cambridge, 2001.

\bibitem{BDKSVW2020}
A.~Buffa, J.~D{\"o}lz, S.~Kurz, S.~Sch{\"o}ps, R.~V{\'a}zquez, and F.~Wolf.
\newblock Multipatch approximation of the de {Rham} sequence and its traces in
  isogeometric analysis.
\newblock {\em Numerische Mathematik}, 144(1):201--236, 2020.

\bibitem{BHvS03_22}
A.~Buffa, R.~Hiptmair, T.~von Petersdorff, and C.~Schwab.
\newblock {Boundary element methods for Maxwell transmission problems in
  Lipschitz domains}.
\newblock {\em Numer. Math.}, 95(3):459--485, 2003.

\bibitem{ChambPock}
A.~Chambolle and T.~Pock.
\newblock Crouzeix-{R}aviart approximation of the total variation on simplicial
  meshes.
\newblock {\em J. Math. Imaging Vision}, 62(6-7):872--899, 2020.

\bibitem{cicuttin2021hybrid}
M.~Cicuttin, A.~Ern, and N.~Pignet.
\newblock {\em Hybrid High-Order Methods : A Primer with Applications to Solid
  Mechanics.}
\newblock SpringerBriefs in Mathematics Ser. Springer International Publishing
  AG, Cham, 2021.

\bibitem{CostabelCoerCMaxw}
M.~Costabel.
\newblock A coercive bilinear form for {M}axwell's equations.
\newblock {\em J. Math. Anal. Appl.}, 157(2):527--541, 1991.

\bibitem{CoDaMaxEVP}
M.~Costabel and M.~Dauge.
\newblock {Maxwell and {L}am\'{e} eigenvalues on polyhedra}.
\newblock {\em Math. Methods Appl. Sci.}, 22(3):243--258, 1999.

\bibitem{CoDaNicMaxw}
M.~Costabel, M.~Dauge, and S.~Nicaise.
\newblock Singularities of {M}axwell interface problems.
\newblock {\em M2AN Math. Model. Numer. Anal.}, 33(3):627--649, 1999.

\bibitem{CrouzFalk}
M.~Crouzeix and R.~S. Falk.
\newblock Nonconforming finite elements for the {S}tokes problem.
\newblock {\em Math. Comp.}, 52(186):437--456, 1989.

\bibitem{CohomDDR22}
D.~A. Di~Pietro, J.~Droniou, and S.~Pitassi.
\newblock {Cohomology of the discrete de Rham complex on domains of general
  topology}, 2022.
\newblock ArXiv: 2209.00957.

\bibitem{EYuDeepRitz}
W.~E and B.~Yu.
\newblock The deep {R}itz method: a deep learning-based numerical algorithm for
  solving variational problems.
\newblock {\em Commun. Math. Stat.}, 6(1):1--12, 2018.

\bibitem{ErnGuerFEQuasI}
A.~Ern and J.-L. Guermond.
\newblock Finite element quasi-interpolation and best approximation.
\newblock {\em ESAIM Math. Model. Numer. Anal.}, 51(4):1367--1385, 2017.

\bibitem{ErnGuermondBookI2021}
A.~Ern and J.-L. Guermond.
\newblock {\em Finite elements {I}---{A}pproximation and interpolation},
  volume~72 of {\em Texts in Applied Mathematics}.
\newblock Springer, Cham, [2021] \copyright 2021.

\bibitem{FKDN2015}
F.~Fuentes, B.~Keith, L.~Demkowicz, and S.~Nagaraj.
\newblock Orientation embedded high order shape functions for the exact
  sequence elements of all shapes.
\newblock {\em Computers \& Mathematics with Applications}, 70(4):353--458,
  2015.

\bibitem{HLXZ2020}
J.~He, L.~Li, J.~Xu, and C.~Zheng.
\newblock {ReLU} deep neural networks and linear finite elements.
\newblock {\em J. Comp. Math.}, 38, 2020.

\bibitem{JMS-53-159}
B.~Li, S.~Tang, and H.~Yu.
\newblock Powernet: Efficient representations of polynomials and smooth
  functions by deep neural networks with rectified power units.
\newblock {\em Journal of Mathematical Study}, 53(2):159--191, 2020.

\bibitem{LS2017}
S.~Liang and R.~Srikant.
\newblock Why deep neural networks for function approximation?
\newblock In {\em Proc. of ICLR 2017}, pages 1 -- 17, 2017.

\bibitem{MR969900}
P.~Marcellini.
\newblock Regularity of minimizers of integrals of the calculus of variations
  with nonstandard growth conditions.
\newblock {\em Arch. Rational Mech. Anal.}, 105(3):267--284, 1989.

\bibitem{MR3564936}
H.~N. Mhaskar and T.~Poggio.
\newblock Deep vs. shallow networks: an approximation theory perspective.
\newblock {\em Anal. Appl. (Singap.)}, 14(6):829--848, 2016.

\bibitem{OSZ19}
J.~A.~A. Opschoor, C.~Schwab, and J.~Zech.
\newblock Exponential {ReLU DNN} expression of holomorphic maps in high
  dimension.
\newblock {\em Constructive Approximation}, 55(1):537--582, 2022.

\bibitem{PV2018}
P.~Petersen and F.~Voigtlaender.
\newblock Optimal approximation of piecewise smooth functions using deep {ReLU}
  neural networks.
\newblock {\em Neural Netw.}, 108:296 -- 330, 2018.

\bibitem{RPK_2019}
M.~Raissi, P.~Perdikaris, and G.~E. Karniadakis.
\newblock Physics-informed neural networks: a deep learning framework for
  solving forward and inverse problems involving nonlinear partial differential
  equations.
\newblock {\em J. Comput. Phys.}, 378:686--707, 2019.

\bibitem{SS11}
S.~A. Sauter and C.~Schwab.
\newblock {\em Boundary element methods}, volume~39 of {\em Springer Series in
  Computational Mathematics}.
\newblock Springer-Verlag, Berlin, 2011.
\newblock Translated and expanded from the 2004 German original.

\bibitem{SZ19_2592}
C.~Schwab and J.~Zech.
\newblock Deep learning in high dimension: Neural network expression rates for
  generalized polynomial chaos expansions in {UQ}.
\newblock {\em Analysis and Applications, Singapore}, 17(1):19--55, 2019.

\bibitem{THH2022}
N.~Trask, A.~Huang, and X.~Hu.
\newblock Enforcing exact physics in scientific machine learning: A data-driven
  exterior calculus on graphs.
\newblock {\em Journal of Computational Physics}, 456:110969, 2022.

\bibitem{Yang_2021}
L.~Yang, X.~Meng, and G.~E. Karniadakis.
\newblock {B-PINNs: Bayesian physics-informed neural networks for forward and
  inverse PDE problems with noisy data}.
\newblock {\em Journal of Computational Physics}, 425:109913, Jan 2021.

\bibitem{yarotsky}
D.~Yarotsky.
\newblock Error bounds for approximations with deep {ReLU} networks.
\newblock {\em Neural Netw.}, 94:103--114, 2017.

\bibitem{MR718413}
V.~V. Zhikov.
\newblock Questions of convergence, duality and averaging for functionals of
  the calculus of variations.
\newblock {\em Izv. Akad. Nauk SSSR Ser. Mat.}, 47(5):961--998, 1983.

\end{thebibliography}
}
\end{document}